\title[Degrees over Perfect Fields]{Degrees of Irreducible Morphisms
  over Perfect Fields} \author[C. Chaio]{Claudia Chaio}
\address[Claudia Chaio]{Centro Marplatense de Investigaciones
  Matemáticas, FCEyN, Universidad Nacional de Mar del Plata,
  CONICET. Funes 3350, 7600 Mar del Plata, Argentina}
\email{algonzal@mdp.edu.ar}
\author[P. Le Meur]{Patrick Le Meur}
\address[Patrick Le Meur]{
Laboratoire de Math\'ematiques, Universit\'e Blaise Pascal \&
  CNRS, Complexe Scientifique Les Cézeaux, BP 80026, 63171 Aubi\`ere
  cedex, France}
\curraddr{Universit\'e Paris Diderot, Sorbonne Universit\'e, CNRS, Institut de
   Math\'ematiques de Jussieu-Paris Rive Gauche, IMJ-PRG, F-75013, Paris, France}
\email{patrick.le-meur@imj-prg.fr}
\author[S. Trepode]{Sonia Trepode}
\address[Sonia Trepode]{Centro
  Marplatense de Investigaciones Matemáticas, FCEyN, Universidad
  Nacional de Mar del Plata,CONICET. Funes 3350, 7600 Mar del Plata,
  Argentina}
\email{strepode@mdp.edu.ar}
\date{\today}
\newtheorem{prop}{Proposition}[subsection]
\newtheorem{Thm}{Theorem}
\newtheorem{cor}[prop]{Corollary}
\newtheorem{lem}[prop]{Lemma}
\newtheorem{thm}[prop]{Theorem}
\newtheorem{ex}[prop]{Example}
\newtheorem{rem}[prop]{Remark}
\def\k{\mathbbm k}
\subjclass[2010]{16G10; 16G60; 16G70}
\keywords{Representation theory; Finite dimensional algebras;
  Auslander-Reiten theory; Irreducible morphisms; Degrees of
  morphisms; Covering theory}
\begin{document}

\begin{abstract}
  The module category of any artin algebra is filtered by the powers
  of its radical, thus defining an associated graded category. As an
  extension of the degree of irreducible morphisms, this text
  introduces the degree of morphisms in the module category.  When the
  ground ring is a perfect field, and the given morphism behaves
  nicely with respect to covering theory (as do irreducible morphisms
  with indecomposable domain or indecomposable codomain), it is shown
  that the degree of the morphism is finite if and only if its induced
  functor has a representable kernel. This gives a generalisation of
  Igusa and Todorov result, about irreducible morphisms with finite
  left degree and over an algebraically closed field.  As a corollary,
  generalisations of known results on the degrees of irreducible
  morphisms over perfect fields are given. Finally, this study is
  applied to the composition of paths of irreducible morphisms in
  relationship to the powers of the radical.
\end{abstract}

\maketitle

%\tableofcontents

\section{Introduction}

Let $A$ be an artin algebra over an artin commutative ring $\k$ and $\mathrm{mod}\,A$ its category of finitely generated modules. The radical
($\mathrm{rad}$) of $\mathrm{mod}\,A$ is related to many useful tools to
understand this category. In particular, it is deeply connected to
Auslander-Reiten theory based
on irreducible morphisms. Recall that $\mathrm{rad}$ denotes the ideal in
$\mathrm{mod}\,A$ generated by non-isomorphisms between indecomposable modules. Its powers $\mathrm{rad}^\ell$ ($\ell\geqslant 0$) are defined inductively by $\mathrm{rad}^0=\mathrm{mod}\,A$ and $\mathrm{rad}^{\ell+1}=\mathrm{rad}^\ell\cdot \mathrm{rad}=\mathrm{rad}\cdot \mathrm{rad}^\ell$. The infinite radical $\mathrm{rad}^\infty$ is defined as $\mathrm{rad}^{\infty}=\cap_{\ell\geqslant 0}\mathrm{rad}^\ell$. In particular, a
morphism in $\mathrm{mod}\,A$ between
indecomposable modules is irreducible if and only if it lies in $\mathrm{rad}\backslash \mathrm{rad}^2$ or, equivalently, it lies in $\mathrm{rad}$ and its image in $\mathrm{rad}/\mathrm{rad}^2$ is non-zero.

The purpose of this article is to generalise the results on degrees and 
composition of irreducible morphisms in $\mathrm{mod}\,A$ proven in \cite{CLT1} to the context 
of finite-dimensional $\k$-algebras over a perfect field.

These new results are presented in connection with the filtration
$(\mathrm{rad}^\ell)_{\ell\geqslant 0}$ of $\mathrm{mod}\,A$, with a point
of view that may be interesting for future investigations.  Results on
compositions have already provided useful information on the
Auslander-Reiten structure of $A$ (see for instance \cite{L2}). Such
results involve the degree of irreducible morphisms, introduced by
S. Liu in \cite{L}. These are defined in terms of indices $n$ such
that the composition of the given irreducible morphism with some
morphism in $\mathrm{rad}^n$ and not in $\mathrm{rad}^{n+1}$ lies in
$\mathrm{rad}^{n+2}$.

K.~Igusa and G.~Todorov proved in \cite[Theorem 6.2]{MR748231} that, when
$A$ is an artin algebra of finite representation type, or a finite-dimensional
algebra over an algebraically closed field, and when $f\colon X\to Y$
is an irreducible morphism with $X$ or else $Y$
indecomposable such that the left degree of $f$ is finite then the
kernel of the functor $\mathrm{Hom}_A(-, f)$ is representable.

One of the aims of this paper is to extend such results when $A$ is a
finite-dimensional algebra over a perfect field. More precisely, the
previously mentioned result of K.~Igusa and G.~Todorov asserts that,
under the same conditions, if the inclusion morphism
$i\colon \mathrm{Ker}(f) \to X$ lies in
$\mathrm{rad}^n\backslash \mathrm{rad}^{n+1}$ for some integer $n$, then the
following sequence is exact for all integers $\ell\geqslant 1$, and
indecomposables $Z\in \mathrm{mod}\,A$,
\[
0 \to
\frac{\mathrm{rad}^{\ell-n}}{\mathrm{rad}^{\ell-n+1}}(Z,\mathrm{Ker}(f))
\xrightarrow{-\cdot i }
\frac{\mathrm{rad}^\ell}{\mathrm{rad}^{\ell+1}}(Z,X)
\xrightarrow{-\cdot f}
\frac{\mathrm{rad}^{\ell+1}}{\mathrm{rad}^{\ell+2}}(Z,Y)\,.
\]
In order to generalise the above result, we introduce, following
S. Liu, the notion of left and right degree of any morphism $f$ from
$X$ to $Y$ in $\mathrm{mod}\,A$ such that
$f \in \mathrm{rad}^{n} \backslash \mathrm{rad}^{n+1}$, for some positive
integer $n$.

More precisely, we define $d_\ell(f)$ to be the least integer $m$ such
that there exists $(Z,g)$ with $Z\in \mathrm{mod}\,A$ indecomposable
and $g\colon Z\to X$ lying in
$\mathrm{rad}^m\backslash \mathrm{rad}^{m+1}$ and such that
$fg\in \mathrm{rad}^{m+n+1}$ (and $\infty$ if such $(Z,g)$ does not
exist). In other words, $d_\ell(f)$ is the least integer $d$ such that
the morphism of functors
$\frac{\mathrm{rad}^d(-,X)}{\mathrm{rad}^{d+1}(-,X)} \to \frac{\mathrm{rad}^{n+d}(-,Y)}{\mathrm{rad}^{n+d+1}(-,Y)}$
induced by $f$ is not a monomorphism.  Note that, when $f$ is replaced
by any $f'$ such that $f-f'\in \mathrm{rad}^{n+1}$, then the following
morphism
\begin{equation}
  \label{eq:2}
  \oplus_{d\geqslant 0} \frac{\mathrm{rad}^d(-,X)}{\mathrm{rad}^{d+1}(-,X)} \longrightarrow \oplus_{d\geqslant 0} \frac{\mathrm{rad}^{n+d}(-,Y)} {\mathrm{rad}^{n+d+1}(-,Y)}
\end{equation}
between contravariant functors from $\mathrm{mod}\,A$ to the category of graded
$\k$-vector spaces remains unchanged, and hence $d_\ell(f)=d_\ell(f')$.
The \emph{right degree} $d_r(f)$ is defined dually.

The results of this article are based on functors with the covering
property, which exist for any algebra $A$ over a perfect field $\k$,
see (\cite{CLT2}). Hence,
\begin{center}
from now on, $\k$ is assumed to be a perfect field.
\end{center}

Given an Auslander-Reiten component $\Gamma$ of $\mathrm{mod}\,A$, given
a $\k$-linear category with length $\mathcal C$ and given a
well-behaved functor $F \colon \mathcal C \to \k(\Gamma)$, a morphism
$X\to Y$ in $\mathrm{mod}\,A$ is called \emph{homogeneous} if there exist
tuples $(x_s)_s$ and $(y_t)_t$ of objects in $\mathcal C$ and
morphisms $(\varphi_{s,t})_{s,t}\in \oplus_{s,t}\mathcal C(x_s,y_t)$
such that $X=\oplus_x Fx_s$, $Y=\oplus_t Fy_t$ and for every $s,t$,
the component $Fx_s\to Fy_t$ of the given morphism $X\to Y$ is equal
to $F(\varphi_{s,t})$. In such a case, the morphism is called
homogeneous with respect to $(y_t)_t$ (or to $(x_s)_s$) if it is
necessary to work with a predetermined decomposition $Y=\oplus_tFy_t$
(or $X=\oplus_sFx_s$, respectively). Note that, if the given morphism
$X\to Y$ lies in $\mathrm{rad}^m$ for some $m$, then it is necessary that
$\varphi_{s,t}\in \mathcal R^m\mathcal C$ for every $s,t$.

Given $d\in \mathbb N$, a morphism $f\colon X\to Y$ is called homogeneous \emph{up to $\mathrm{rad}^{d+1}$} if it is the sum of two morphisms from $X$ to $Y$,
the former being homogeneous and the latter lying in $\mathrm{rad}^{d+1}$.

Now, we state the first result of the article.

\begin{Thm}[Proposition~\ref{sec:kern-char-homog-1}]
  \label{sec:introduction-1}
Let $f\colon X\to Y$ be a morphism in $\mathrm{mod}\,A$. Assume that
\begin{itemize}
\item $f$ lies in $\mathrm{rad}^d\backslash \mathrm{rad}^{d+1}$ and is homogeneous up to $\mathrm{rad}^d$,
\item $f$ has finite left degree denoted by $n$.
\end{itemize}
Then,
\begin{enumerate}
\item there exists $f'\colon X\to Y$ such that
  $f-f'\in \mathrm{rad}^{d+1}$ and such that the inclusion morphism
  $i\colon \mathrm{Ker}(f')\to X$ lies in
  $\mathrm{rad}^n\backslash\mathrm{rad}^{n+1}$,
\item there exists a direct sum decomposition
  $\mathrm{Ker}(f') = (\oplus_{m\geqslant n}K^{(m)})\oplus K^{(\infty)}$
  such that the restriction of the inclusion morphism $i$ to any
  indecomposable direct summand of $K^{(m)}$ (or, to $K^{(\infty)}$)
  lies in $\mathrm{rad}^m\backslash\mathrm{rad}^{m+1}$ (or, to
  $\mathrm{rad}^\infty$, respectively),
  \item for every integer $\ell\geqslant n$ and every indecomposable
    $Z\in \mathrm{mod}\,A$, the following sequence
        \[
    0\to \bigoplus\limits_{n\leqslant m \leqslant \ell} \frac{\mathrm{rad}^{\ell-m}}{\mathrm{rad}^{\ell-m+1}}(Z,K^{(m)})
      \xrightarrow{-\cdot i}
      \frac{\mathrm{rad}^{\ell}}{\mathrm{rad}^{\ell+1}}(Z,X)
      \xrightarrow{-\cdot f}
      \frac{\mathrm{rad}^{\ell+d}}{\mathrm{rad}^{\ell+d+1}}(Z,Y)\,
      \]
      \noindent is exact.
  \end{enumerate}
\end{Thm}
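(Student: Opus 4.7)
The plan is to exploit the covering functor $F\colon \mathcal{C}\to \k(\Gamma)$ of \cite{CLT2} so as to lift $f$ to a morphism in the length category $\mathcal{C}$, analyse its kernel there, and push this information back to $\mathrm{mod}\,A$. By the homogeneity hypothesis (which, combined with $f\in\mathrm{rad}^d\setminus\mathrm{rad}^{d+1}$, forces the homogeneous part to have components in $\mathcal{R}^d\mathcal{C}$), one can write $f = Fg + r$, where $g=(\varphi_{s,t})\colon \bigoplus_s x_s\to\bigoplus_t y_t$ is a morphism in $\mathcal{C}$ with $X=\bigoplus_s Fx_s$, $Y=\bigoplus_t Fy_t$, and $r\in\mathrm{rad}^{d+1}(X,Y)$. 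Setting $f':=f-r=Fg$ already secures the first half of assertion (1); from here on the analysis concerns $f'$ exclusively.

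Inside $\mathcal{C}$, $g$ admits a kernel whose indecomposable summands each carry an inclusion into $\bigoplus_s x_s$ of a well-defined radical degree in $\mathcal{C}$, an element of $\mathbb{N}\cup\{\infty\}$. Grouping summands by this degree yields $\mathrm{Ker}(g) = \bigoplus_m\widetilde K^{(m)}\oplus \widetilde K^{(\infty)}$ in $\mathcal{C}$ with each inclusion $\widetilde K^{(m)}\hookrightarrow\bigoplus_s x_s$ sitting in $\mathcal{R}^m\mathcal{C}\setminus\mathcal{R}^{m+1}\mathcal{C}$ (resp.\ in $\mathcal{R}^\infty\mathcal{C}$). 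Because $F$ is a covering functor and preserves as well as reflects the radical filtration on the relevant hom-spaces, applying $F$ converts this into a decomposition $\mathrm{Ker}(f') = \bigoplus_m K^{(m)}\oplus K^{(\infty)}$ of the kernel of $f'$ in $\mathrm{mod}\,A$, with the inclusion of $K^{(m)}:=F(\widetilde K^{(m)})$ in $\mathrm{rad}^m\setminus\mathrm{rad}^{m+1}$ and that of $K^{(\infty)}$ in $\mathrm{rad}^\infty$.

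The assumption $d_\ell(f)=n<\infty$ then pins down the least index occurring. Indeed, if $\widetilde K^{(m)}\neq 0$ for some $m<n$, the inclusion of an indecomposable summand of $K^{(m)}$ is a morphism $Z\to X$ in $\mathrm{rad}^m\setminus\mathrm{rad}^{m+1}$ that composes to zero with $f'$ and hence to an element of $\mathrm{rad}^{m+d+1}$ with $f$, contradicting $d_\ell(f)=n$; conversely, any test morphism achieving the minimum $n$ in the definition of $d_\ell(f)$ lifts to $\mathcal{C}$ (using the homogeneity and the covering functor) and exhibits a non-zero $\widetilde K^{(n)}$. Thus the total inclusion $i\colon\mathrm{Ker}(f')\to X$ lies in $\mathrm{rad}^n\setminus\mathrm{rad}^{n+1}$, completing (1) and (2). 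For (3), fix indecomposable $Z$ and $\ell\geqslant n$ and take $h\in\mathrm{rad}^\ell(Z,X)$ with $fh\in\mathrm{rad}^{\ell+d+1}$, equivalently $f'h\in\mathrm{rad}^{\ell+d+1}$; lifting $h$ modulo $\mathrm{rad}^{\ell+1}$ to $\mathcal{C}$, the induced class factors through the graded kernel of $g$, which decomposes into the $\widetilde K^{(m)}$. Projecting to the summands and pushing back with $F$ shows that the class of $h$ in $\mathrm{rad}^\ell/\mathrm{rad}^{\ell+1}(Z,X)$ lifts uniquely to $\bigoplus_{n\leqslant m\leqslant\ell} \mathrm{rad}^{\ell-m}/\mathrm{rad}^{\ell-m+1}(Z,K^{(m)})$, with components for $m>\ell$ or $m=\infty$ contributing only to $\mathrm{rad}^{\ell+1}$. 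This simultaneously produces exactness at the middle term and injectivity of $-\cdot i$ on the left.

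The main obstacle is the coordination between the kernel of $g$ in $\mathcal{C}$ and the kernel of $Fg=f'$ in $\mathrm{mod}\,A$: since $F$ is not exact in general, the identification $F(\mathrm{Ker}(g))=\mathrm{Ker}(Fg)$ (after absorbing the correction $r$ into $f'$), together with the preservation of the radical-degree grading under $F$, relies crucially on the covering-theoretic input of \cite{CLT2}. Once this compatibility is in place, the rest of the argument is a graded, summand-by-summand adaptation of the Igusa--Todorov kernel-factorisation strategy.
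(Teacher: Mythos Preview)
Your plan has a genuine gap at its load-bearing step: you assume that the lifted morphism $g=(\varphi_{s,t})\colon \bigoplus_s x_s\to\bigoplus_t y_t$ admits a kernel in (the additive closure of) $\mathcal{C}$, and that $F$ carries it to $\mathrm{Ker}(f')$. Neither of these is available. The length category $\mathcal{C}$ is merely $\k$-linear with a radical filtration (think of a mesh category); its additive closure is not abelian and there is no reason for $g$ to have a kernel there. And even granting a kernel object upstairs, nothing in the covering property (conditions (a)--(f) on $F$) says $F$ is exact or preserves kernels; $F$ only controls the graded pieces $\mathrm{rad}^n/\mathrm{rad}^{n+1}$ via the bijections in (c) and the injectivity in (d). You flag this obstacle yourself in the last paragraph, but the appeal to \cite{CLT2} does not resolve it: no such exactness statement is proved there.

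The paper avoids this entirely by never computing a kernel in $\mathcal{C}$. It takes $\mathrm{Ker}(f')$ directly in $\mathrm{mod}\,A$, decomposes it into indecomposables $K^{(m)}_r$ according to the radical depth $m$ of each inclusion $K^{(m)}_r\hookrightarrow X$, and then proves that (after composing with a suitable automorphism of $\mathrm{Ker}(f')$) each such inclusion can be made \emph{homogeneous} with respect to $(x_s)_s$. The construction of this automorphism is the technical heart: one writes each inclusion $i^{(m)}_r$ as a sum $\sum_z F(\iota^{(m)}_{z,r,s})$ plus a tail in $\mathrm{rad}^{m+1}$, observes that every individual piece $[F(\iota^{(m)}_{z,r,s});s]$ already kills $f'$ (a length-category computation as in your sketch for (3)), and then uses an elementary splitting lemma for indecomposables (``if $u+v$ is invertible then $u$ or $v$ is'') together with a minimality condition on the decomposition to select one $z=k^{(m)}_r$ per summand so that the resulting map is still an automorphism. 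Only after this homogeneity is established does one get the injectivity of $-\cdot i$ on the left of the exact sequence (your ``uniqueness'' claim in (3)); without it, a morphism $h\colon Z\to\mathrm{Ker}(f')$ with $hi\in\mathrm{rad}^\ell$ need not have its $K^{(m)}$-component in $\mathrm{rad}^{\ell-m}$. In short: your lifting-and-factoring idea for the middle exactness is right in spirit, but the kernel and its decomposition must be produced and analysed in $\mathrm{mod}\,A$, not in $\mathcal{C}$.
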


A morphism in $\mathrm{rad}^d\backslash\mathrm{rad}^{d+1}$ is not necessarily
homogeneous up to $\mathrm{rad}^{d+1}$, but is a sum of such
morphisms.
Any irreducible morphism with indecomposable domain (or
codomain) is homogeneous up to $\mathrm{rad}^2$.

As a consequence of Theorem A, it is possible to extend the above
mentioned result of K.~Igusa and G.~Todorov to the context of
finite-dimensional algebras over perfect fields.

\begin{Thm}[Theorem~\ref{sec:finite-left-degree-1}]
  \label{sec:introduction-2}
  Let $f\colon X\to Y$ be an irreducible morphism. Assume that $X$, or
  else $Y$, is indecomposable and that $f$ has finite left degree
  denoted by $n$. Then, there exists $f'\colon X\to Y$ irreducible
  such that
  \begin{enumerate}
  \item $f-f'\in \mathrm{rad}^2$,
  \item the inclusion morphism $i\colon \mathrm{Ker}(f')\to X$ lies in
    $\mathrm{rad}^n\backslash\mathrm{rad}^{n+1}$,
  \item for every integer $\ell\geqslant n$ and every
    indecomposable $Z\in \mathrm{mod}\,A$, the following sequence is exact
    \[
    0 \to
    \frac{\mathrm{rad}^{\ell-n}}{\mathrm{rad}^{\ell-n+1}}(Z,\mathrm{Ker}(f'))
    \xrightarrow{-\cdot i }
    \frac{\mathrm{rad}^\ell}{\mathrm{rad}^{\ell+1}}(Z,X)
    \xrightarrow{-\cdot f}
    \frac{\mathrm{rad}^{\ell+1}}{\mathrm{rad}^{\ell+2}}(Z,Y)\,
    \]
    is exact. In addition, if $f$ is freely irreducible, then $f'$ may
    be chosen equal to $f$.
  \end{enumerate}
\end{Thm}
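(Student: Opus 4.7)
The plan is to deduce this from Proposition~\ref{sec:kern-char-homog-1} applied with $d=1$, and then to collapse the resulting kernel decomposition. Since $f$ is irreducible, $f\in\mathrm{rad}\setminus\mathrm{rad}^2$; and the assumption that $X$ or $Y$ is indecomposable ensures that $f$ is homogeneous up to $\mathrm{rad}^2$, as recalled in the discussion preceding the theorem. Theorem~A therefore applies with $d=1$ and produces $f'\colon X\to Y$ satisfying $f-f'\in\mathrm{rad}^2$ (item~(1)) and such that the inclusion $i\colon\mathrm{Ker}(f')\to X$ lies in $\mathrm{rad}^n\setminus\mathrm{rad}^{n+1}$ (item~(2)). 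Since for morphisms with indecomposable domain or codomain irreducibility is equivalent to membership in $\mathrm{rad}\setminus\mathrm{rad}^2$, the morphism $f'$ is again irreducible.

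Theorem~A also provides a decomposition $\mathrm{Ker}(f')=(\oplus_{m\geqslant n}K^{(m)})\oplus K^{(\infty)}$ together with the exact sequence
\[
0 \to \bigoplus_{n\leqslant m\leqslant\ell}\frac{\mathrm{rad}^{\ell-m}}{\mathrm{rad}^{\ell-m+1}}(Z,K^{(m)}) \xrightarrow{-\cdot i} \frac{\mathrm{rad}^\ell}{\mathrm{rad}^{\ell+1}}(Z,X) \xrightarrow{-\cdot f'} \frac{\mathrm{rad}^{\ell+1}}{\mathrm{rad}^{\ell+2}}(Z,Y).
\]
Because $f-f'\in\mathrm{rad}^2$, the induced maps $-\cdot f$ and $-\cdot f'$ between the two rightmost spaces coincide, so $f'$ may be replaced by $f$ on the right. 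To deduce item~(3) it then remains to prove that $K^{(m)}=0$ for every $m>n$ and $K^{(\infty)}=0$; this forces $\mathrm{Ker}(f')=K^{(n)}$ and makes the direct sum on the left collapse into the single term $\frac{\mathrm{rad}^{\ell-n}}{\mathrm{rad}^{\ell-n+1}}(Z,\mathrm{Ker}(f'))$ demanded by the claim.

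This vanishing is the main obstacle, and is specific to the irreducible case: for a general homogeneous morphism in $\mathrm{rad}\setminus\mathrm{rad}^2$ the kernel may split into summands with various inclusion degrees, whereas the rigidity imposed by irreducibility together with the indecomposability of $X$ or $Y$ should force every indecomposable summand of $\mathrm{Ker}(f')$ to have inclusion precisely in $\mathrm{rad}^n\setminus\mathrm{rad}^{n+1}$. The argument would analyse such summands through the well-behaved covering functor guaranteed by the perfectness of $\k$, by showing that an indecomposable summand of $\mathrm{Ker}(f')$ whose inclusion lies strictly deeper than $\mathrm{rad}^n$, or in $\mathrm{rad}^\infty$, would, via the Auslander-Reiten structure around $X$ or $Y$, contradict the minimality of $n$ as the left degree of $f$.

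Finally, in the freely irreducible case, the modification of $f$ built into Theorem~A is unnecessary: free irreducibility means that $f$ itself already provides the homogeneous representative lifting through the covering functor, so one may take $f'=f$ and all three items hold for $f$.
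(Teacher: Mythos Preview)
Your overall strategy---apply Theorem~A with $d=1$ and then collapse the kernel decomposition---is exactly the paper's approach. But you misidentify the key step and leave it unproved.

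You call the vanishing of $K^{(m)}$ for $m>n$ and of $K^{(\infty)}$ ``the main obstacle'' and sketch a vague plan involving the covering functor and the Auslander--Reiten structure around $X$ or $Y$. This is never carried out, and the suggested route is not the right one. The actual reason is a one-line observation that the paper states immediately before the theorem: since $f'$ is irreducible with indecomposable domain or codomain, $\mathrm{Ker}(f')$ is \emph{indecomposable}. Consequently the decomposition $\mathrm{Ker}(f')=(\bigoplus_{m\geqslant n}K^{(m)})\oplus K^{(\infty)}$ provided by Theorem~A can have only a single nonzero indecomposable summand; since $i\in\mathrm{rad}^n\setminus\mathrm{rad}^{n+1}$, that summand is $K^{(n)}$ and all the others vanish. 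The exact sequence of Theorem~A then collapses immediately to the one in item~(3).

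Two minor remarks. First, your observation that $-\cdot f$ and $-\cdot f'$ agree on the graded pieces is correct but unnecessary: Proposition~\ref{sec:kern-char-homog-1} is already stated with $f$ (not $f'$) on the right. Second, your handling of the freely irreducible case is correct and matches the paper: by Proposition~\ref{subsec_wellbehavedexistence}(2) one may choose the covering functor so that $f$ itself is homogeneous, whence $f'=f$.
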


Following \cite[Section 2.5]{CLT2}, a morphism
$f\colon X \to \oplus_{i=1}^r X_i^{n_i}$ with $X,X_i$ indecomposable
is called \emph{freely irreducible} if, for every
$i\in\{1,\ldots,r\}$, the $n_i$-tuple
$(\overline{f_{i,1}},\ldots,\overline{f_{i,n_i}})$ of residue classes
in $\mathrm{irr}(X,X_i)$ is free over
$\kappa_X\otimes_\k\kappa_{X_i}^{\mathrm{op}}$. Note that freely
irreducible morphisms are particular cases of strongly irreducible
morphisms introduced in \cite{CLT2}. In particular, when $X$ is
indecomposable, $f$ is freely irreducible if the division algebra
$\mathrm{End}_A(X)/\mathrm{rad}(X,X)$ is trivial. When $\k$ is an
algebraically closed field, the irreducible morphism in
Theorem~\ref{sec:introduction-2} is automatically freely
irreducible. In this setting, Statements (1) and (2) of
Theorem~\ref{sec:introduction-2} where proved in \cite{CLT1} as well
as the particular case $\ell=n$ of Statement (3). Actually, it is
possible to derive from the above theorem extensions of the results in
\cite{CLT1} (there, $\k$ is an algebraically closed field) to finite
dimensional algebras over perfect fields. For instance
\begin{itemize}
\item if $f\colon X\to Y$ is a freely irreducible morphism where
  $X$, or else $Y$ is indecomposable, then $d_\ell(f)$ is finite (equal
  to some given $n$) if and only if $f$ is not a monomorphism and the
  inclusion morphism $\mathrm{Ker}(f)\to X$ lies in
  $\mathrm{rad}^n\backslash\mathrm{rad}^{n+1}$ (see
  Corollary~\ref{sec:spec-freely-irred-1}),
\item given two irreducible morphisms $f_1,f_2\colon X\to Y$ where $X$
  is indecomposable and the division algebra $\mathrm{End}_A(X)/\mathrm{rad}(X,X)$ is trivial, if
  $d_\ell(f_1)<\infty$, then $d_\ell(f_1)=d_\ell(f_2)$ and $\mathrm{Ker}(f_1)\simeq \mathrm{Ker}(f_2)$,
\item the algebra $A$ is of finite representation type if and only if,
  for every indecomposable injective $I\in \mathrm{mod}\,A$, the
  irreducible morphism $I\to I/\mathrm{soc}(I)$ has finite left degree
  (see Theorem~\ref{sec:finite-left-degree-1} for a richer statement).
\end{itemize}

Finally, one of the original motivations for studying the kernel
of~(\ref{eq:2}) was to determine when the composition of $n$
irreducible morphisms between indecomposable $A$-modules lies in $\mathrm{rad}^{n+1}$. A first approach of this problem is given in
\cite{CLT1} when $\k$ is algebraically closed and extended to the
case where $\k$ is a perfect field in \cite{CLT2}. The above results
provide another approach to this problem.
\begin{Thm}
  \label{sec:introduction}
  Let $A$ be a finite dimensional $\k$-algebra over a perfect
  field. Let
  $X_0 \xrightarrow{f_1} X_1 \to \cdots \to X_{n-1}\xrightarrow{f_n}
  X_n$
  be a chain of irreducible morphisms between indecomposable
  $A$-modules. For each $t$, let $f_t'\colon X_{t-1}\to X_t$ be such
  as $f'$ in Theorem B when $f=f_t$. Consider the following
  assertions.
  \begin{enumerate}[(i)]
  \item  $f_1 \cdots f_n \in \mathrm{rad}^{n+1}$.
  \item There exists $t\in \{1,\ldots,n\}$ such that
    $d_\ell(f_t)\leqslant t-1$, and there exists $h\in \mathrm{rad}^{t-1-d_\ell(f_t)}(X_0,\mathrm{Ker}(f'_t))$ not lying in
    $\mathrm{rad}^{t-d_\ell(f_t)}$, and such that $f_1\cdots f_{t-1}-h i \in \mathrm{rad}^t$ (where $i\colon \mathrm{Ker}(f'_t)\to X_{t-1}$ is the
    inclusion morphism).
  \item There exists $t\in \{1,\ldots,n\}$ such that
    $d_l(f_t) \leq t-1$, there exists a path of length
    $t-1-d_\ell(f_t)$ from $X_0$ to $\mathrm{Ker}(f'_t)$ and with nonzero
    composition, and there exists a path
    $X_0 \rightarrow X_1 \rightarrow \dots \rightarrow
    X_{t-1}\rightarrow X_{t}$ with zero composition.
  \end{enumerate}
  Then $(i)$ and $(ii)$ are equivalent and imply $(iii)$.
  If, moreover, $\mathrm{dim}_{\k}\mathrm{irr}(X_{s-1},X_s)=1$ for every $s\in \{1,\ldots,t\}$, then $(iii)$
  implies $(i)$ and $(ii)$.
\end{Thm}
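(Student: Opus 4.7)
\medskip
\noindent \textbf{Proof proposal.} The plan is to prove $(i)\Leftrightarrow(ii)$ and $(ii)\Rightarrow(iii)$ without any extra hypothesis, then $(iii)\Rightarrow(i)$ under the one-dimensionality assumption; the implication $(iii)\Rightarrow(ii)$ will then follow by composing with the already proven $(i)\Rightarrow(ii)$. Set $n_t:=d_\ell(f_t)$ and let $i_t\colon \mathrm{Ker}(f_t')\to X_{t-1}$ be the inclusion morphism; Theorem~B yields $f_t'\,i_t=0$, $f_t-f_t'\in\mathrm{rad}^2$, and $i_t\in\mathrm{rad}^{n_t}\setminus\mathrm{rad}^{n_t+1}$ whenever $n_t$ is finite.

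The direction $(ii)\Rightarrow(i)$ is a direct computation: writing $f_1\cdots f_{t-1}=h\,i_t+r$ with $r\in\mathrm{rad}^t$, the identity $i_t\,f_t=i_t(f_t-f_t')\in\mathrm{rad}^{n_t+2}$ forces both $h\,i_t\,f_t\,f_{t+1}\cdots f_n$ and $r\,f_t\cdots f_n$ to lie in $\mathrm{rad}^{n+1}$.

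For $(i)\Rightarrow(ii)$, I would pick $t\in\{1,\ldots,n\}$ minimal such that $f_1\cdots f_t\in\mathrm{rad}^{t+1}$; such a $t$ exists because $f_1\notin\mathrm{rad}^2$ and $f_1\cdots f_n\in\mathrm{rad}^{n+1}$. Minimality yields $f_1\cdots f_{t-1}\in\mathrm{rad}^{t-1}\setminus\mathrm{rad}^t$, and combined with $f_1\cdots f_t\in\mathrm{rad}^{t+1}$ and the indecomposability of $X_0$, the definition of the left degree gives $n_t\leq t-1$. Theorem~B then provides, for $\ell=t-1$ and $Z=X_0$, the exact sequence
\[
0\to \frac{\mathrm{rad}^{t-1-n_t}}{\mathrm{rad}^{t-n_t}}(X_0,\mathrm{Ker}(f_t'))\xrightarrow{-\cdot i_t}\frac{\mathrm{rad}^{t-1}}{\mathrm{rad}^{t}}(X_0,X_{t-1})\xrightarrow{-\cdot f_t}\frac{\mathrm{rad}^{t}}{\mathrm{rad}^{t+1}}(X_0,X_t),
\]
and exactness at the middle term, applied to the nonzero class of $f_1\cdots f_{t-1}$ (which is killed by $-\cdot f_t$), delivers the required $h$; it lies outside $\mathrm{rad}^{t-n_t}$ by injectivity of $-\cdot i_t$.

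Finally, $(ii)\Rightarrow(iii)$ follows by decomposing $\mathrm{Ker}(f_t')$ into indecomposables, selecting a component $h_j$ of $h$ that avoids $\mathrm{rad}^{t-n_t}$, and invoking the classical description of $\mathrm{rad}^m/\mathrm{rad}^{m+1}$ between indecomposables as spanned by classes of compositions of $m$ irreducible morphisms, which yields a path of length $t-1-n_t$ from $X_0$ to that summand with nonzero composition; the zero-composition path $X_0\to X_1\to\cdots\to X_t$ is the original chain $f_1,\ldots,f_t$ itself, since applying the $(ii)\Rightarrow(i)$ computation with $n$ replaced by $t$ gives $f_1\cdots f_t\in\mathrm{rad}^{t+1}$. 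For $(iii)\Rightarrow(i)$ under the one-dimensionality hypothesis, one writes each irreducible $g_s\colon X_{s-1}\to X_s$ as $\lambda_s f_s+r_s$ with $\lambda_s\in\k^\times$ and $r_s\in\mathrm{rad}^2$, so that $g_1\cdots g_t\equiv(\prod_s\lambda_s)\,f_1\cdots f_t\pmod{\mathrm{rad}^{t+1}}$; a zero-composition choice of $(g_s)_s$ thus forces $f_1\cdots f_t\in\mathrm{rad}^{t+1}$, whence $(i)$. The main obstacle is the bookkeeping in $(i)\Rightarrow(ii)$: one must verify that $n_t\leq t-1$ so that Theorem~B is applicable, which amounts to a careful translation between the definitions of the left degree and of powers of the radical.
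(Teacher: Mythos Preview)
Your argument for $(i)\Leftrightarrow(ii)$ is essentially the paper's, and in fact your $(ii)\Rightarrow(i)$ is more careful: the paper writes ``$if_t=0$'' when strictly only $if_t'=0$ holds, whereas you correctly split $i_tf_t=i_t(f_t-f_t')\in\mathrm{rad}^{n_t+2}$ and track the radical exponents. Your $(iii)\Rightarrow(i)$ under the one-dimensionality hypothesis is also the same as the paper's (which cites this as Remark~\ref{sec:appl-comp-irred-4}(2), itself relying on \cite[Section~3]{CLT2}).

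There is, however, a genuine gap in your $(ii)\Rightarrow(iii)$. You claim that the required zero-composition path $X_0\to X_1\to\cdots\to X_t$ ``is the original chain $f_1,\ldots,f_t$ itself, since \ldots\ $f_1\cdots f_t\in\mathrm{rad}^{t+1}$''. But lying in $\mathrm{rad}^{t+1}$ is strictly weaker than being zero; assertion $(iii)$ asks for a path through the \emph{same vertices} $X_0,\ldots,X_t$ whose composition is \emph{actually} zero, and the original $f_1,\ldots,f_t$ need not satisfy this. The paper fills this gap by invoking \cite[Section~3]{CLT2}: given that $f_1\cdots f_t\in\mathrm{rad}^{t+1}$, that result produces (over a perfect field, via covering techniques) another path $X_0\to X_1\to\cdots\to X_t$ of irreducible morphisms with composition equal to zero. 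This step is not elementary and cannot be bypassed by the computation you give. (As a minor aside, decomposing $\mathrm{Ker}(f_t')$ into indecomposables is unnecessary here: the kernel of an irreducible morphism with indecomposable domain is already indecomposable.)
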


The text is organised as follows. Section~\ref{sec:prel} sets some
usual conventions on modules and gives some technical lemmas. It also
introduces categories with length and functors with a covering
property defined on these categories. These notions are essential in
the proofs of the above mentioned results.
Section~\ref{sec:degr-homog-morph} is devoted to proving
Theorem~\ref{sec:introduction-1}. This is applied in
section~\ref{sec:applications1} to irreducible morphisms. There,
Theorem~\ref{sec:introduction-2} and the above mentioned results
related to degrees of irreducible morphisms are proved.  Finally,
Theorem~\ref{sec:introduction} is proved in
Section~\ref{sec:applications1}.

Unless otherwise stated, $\k$ denotes a perfect field and $A$ is a finite
dimensional algebra over $\k$.

\section{Preliminaries}
\label{sec:prel}

This section introduces basic material used in the proofs of the main
results of the text. The conventions on modules are set in
\ref{subsec_modules}. Next, \ref{sec:prop-kern-irred}
 collects a
couple of useful results on kernels of irreducible morphisms. The
proofs of the main results are based on functors with the covering
property defined on categories with length. The latter are defined in
\ref{sec:categ-with-length} and the former are introduced in
\ref{sec:funct-with-cover}.

\subsection{Conventions on modules}
\label{subsec_modules}

Let $\mathrm{ind}\, A$ be a full subcategory of $\mathrm{mod}\, A$ which
contains exactly one representative of each isomorphism class of
indecomposable modules. Given modules $X,Y\in \mathrm{ind}\,A$, the
quotient vector space $\mathrm{rad}(X,Y)/\mathrm{rad}^2(X,Y)$ is denoted by
$\mathrm{irr}(X,Y)$ and called the \emph{space of irreducible morphisms}
from $X$ to $Y$. It is naturally an
$\mathrm{End}_A(X)/\mathrm{rad}(X,X)-\mathrm{End}_A(Y)/\mathrm{rad}(Y,Y)$-bimodule.
The division $\k$-algebra $\mathrm{End}_A(X)/\mathrm{rad}(X,X)$ is denoted
by $\kappa_X$.  Recall that the \emph{Auslander-Reiten quiver of $A$}
is the quiver $\Gamma(\mathrm{mod}\, A)$ with vertices the modules in
$\mathrm{ind}\, A$, such that there is an arrow (and exactly one)
$X\to Y$ if and only if $\mathrm{irr}(X,Y)\neq 0$ for every pair of
vertices $X,Y\in\Gamma$.  The Auslander-Reiten translation is denoted
by $\tau_A=D\mathrm{Tr}$. If $\Gamma$ is a connected component of
$\Gamma(\mathrm{mod}\, A)$ (or an \emph{Auslander-Reiten component}, for
short), the full subcategory of $\mathrm{ind}\, A$ with objects the
modules in $\Gamma$ is denoted by $\mathrm{ind}\,\Gamma$.  Let
$f\colon X\to \oplus_{i=1}^rX_i^{n_i}$ be an irreducible morphism
where $X\in \mathrm{ind}\,A$, $X_1,\ldots,X_r\in \mathrm{ind}\,A$ are
pairwise non isomorphic and $n_1,\ldots,n_r\geqslant 1$. Recall from
the introduction that, $f$ is freely irreducible if, for every
$i\in\{1,\ldots,r\}$, the $n_i$-tuple
$(\overline{f_{i,1}},\ldots,\overline{f_{i,n_i}})$ of residue classes
in $\mathrm{irr}(X,X_i)$ is free over
$\kappa_X\otimes_{\k}\kappa_{X_i}^{\mathrm{op}}$. It is always free
over $\kappa_{X_i}^{\mathrm{op}}$. Note that $f$ is freely irreducible under
any of the following conditions: if $\kappa_X\simeq \k$; or if $\k$ is
an algebraically closed field. With dual considerations is defined
freely irreducible morphisms with indecomposable codomain.

Let $X\to Y$ be an arrow in $\Gamma(\mathrm{mod}\,A)$ with valuation
denoted by $(a,b)$. In particular, $a$ equals
$\mathrm{dim}_{\kappa_X}\mathrm{irr}(X,Y)$. Since
$\kappa_X\otimes_{\k} \kappa_Y^{\mathrm{op}}$ is semisimple
$\k$-algebra (because $\k$ is perfect), $a$ is at least
$\ell(\mathrm{irr}(X,Y))$ and at most
$\mathrm{dim}_{\mathbbm k}\kappa_Y \cdot \ell(\mathrm{irr}(X,Y))$,
where $\ell$ denotes the length over
$\kappa_X \otimes_{\mathbbm k} \kappa_Y^{\mathrm{op}}$. Dual
identities apply to $b$. Note that, if $a=1$ or $b=1$, then
$\ell(\mathrm{irr}(X,Y))=1$.  This occurs when, for instance, the arrow
$X\to Y$ has finite left degree (see \cite[Section 1.6]{L2}).

\medskip

The following lemma will be useful later on.
\begin{lem}
  \label{sec:making-section-from-2}
  Let $\{M_i\}_i$ be a family of indecomposable modules and let
  $M=\oplus_i M_i$. For every $i$, let
  $\{\lambda_{i,j}\}_j$ be a family of morphisms $M_i\to M$. Assume
  that $[\sum_j\lambda_{i,j}\ ;\ i]^T\colon \oplus_i M_i \to M$ is an
  isomorphism. Then, there exists an index $j_i$, for every $i$, such
  that the resulting morphism $[\lambda_{i,j_i}\ ;\ i]^T\colon
  \oplus_i M_i\to M$ is an isomorphism.
\end{lem}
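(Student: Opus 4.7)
My plan is to fix the indices $j_i$ one at a time by induction on $i$; this is possible since the index set is finite ($M\in\mathrm{mod}\,A$). Ordering the indices as $1,\ldots,n$, I would set $\Lambda^{(0)}:=[\sum_j\lambda_{i,j};i]^T$, the given isomorphism, and construct inductively isomorphisms $\Lambda^{(t)}\colon\oplus_iM_i\to M$ for $t=0,\ldots,n$ whose restriction to $M_i$ equals $\lambda_{i,j_i}$ for $i\leqslant t$ and $\sum_j\lambda_{i,j}$ for $i>t$. Then $\Lambda^{(n)}=[\lambda_{i,j_i};i]^T$ is the desired isomorphism.

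The key input at the inductive step from $t-1$ to $t$ comes from the local structure of $\mathrm{End}_A(M_t)$. Writing $\pi_t\colon M\to M_t$ and $\iota_t\colon M_t\to M$ for the canonical projection and inclusion, I would consider the endomorphisms $\alpha_j:=\pi_t\circ(\Lambda^{(t-1)})^{-1}\circ\lambda_{t,j}$ of $M_t$. Because the $t$-th row of $\Lambda^{(t-1)}$ is $\sum_j\lambda_{t,j}$, a direct computation shows $\sum_j\alpha_j=\pi_t\circ\iota_t=\mathrm{id}_{M_t}$. Since $M_t$ is indecomposable, $\mathrm{End}_A(M_t)$ is local, and the radical being closed under addition forces at least one $\alpha_j$ to be a unit; I would pick $j_t$ accordingly.

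To complete the inductive step, let $\Lambda^{(t)}$ be obtained from $\Lambda^{(t-1)}$ by replacing the $t$-th row with $\lambda_{t,j_t}$. I would then write $\Lambda^{(t)}=\Lambda^{(t-1)}\circ(\mathrm{id}-h)$ where $h:=(\Lambda^{(t-1)})^{-1}\circ\bigl(\sum_{j\neq j_t}\lambda_{t,j}\bigr)\circ\pi_t$ vanishes on every $M_i$ with $i\neq t$. Hence with respect to $\oplus_iM_i=M_t\oplus\bigoplus_{i\neq t}M_i$, the endomorphism $\mathrm{id}-h$ is block triangular with diagonal blocks $\alpha_{j_t}$ (a unit) and $\mathrm{id}$, and is therefore an isomorphism; so is $\Lambda^{(t)}$. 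The only delicate point is correctly identifying the $M_t$-diagonal block of $\mathrm{id}-h$ as $\alpha_{j_t}$, which uses the relation $\sum_j\alpha_j=\mathrm{id}_{M_t}$; once that is done the argument reduces cleanly to the locality of $\mathrm{End}_A(M_t)$.
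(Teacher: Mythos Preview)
Your proof is correct. Both your argument and the paper's reduce to handling one summand at a time and exploit the locality of $\mathrm{End}_A(M_t)$, but the executions differ. The paper treats the base case $M=M_1\oplus M_2$ with $\{\lambda_{1,j}\}=\{u,v\}$ and $\{\lambda_{2,j}\}=\{w\}$ by passing to the quotient $\overline M=M/\mathrm{Im}(w)$, showing $(u+v)\pi\colon M_1\to\overline M$ is an isomorphism, deducing that one of $u\pi$, $v\pi$ is an isomorphism, and then concluding via surjectivity plus finite length that $[u,w]^T$ or $[v,w]^T$ is an isomorphism. You instead precompose with $(\Lambda^{(t-1)})^{-1}$ to manufacture endomorphisms $\alpha_j$ of $M_t$ summing to the identity and finish with a block-triangular factorisation $\Lambda^{(t)}=\Lambda^{(t-1)}\circ(\mathrm{id}-h)$. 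Your route avoids the quotient construction and the surjectivity-implies-bijectivity step, trading them for an explicit computation with inverses; the paper's route is slightly more conceptual but relies on the finite-length hypothesis at the very end. Either way the heart of the matter is the same local-ring fact.

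One cosmetic point: the paper writes composition diagrammatically (so $gf$ means first $g$, then $f$), whereas you use the standard $\circ$; this causes no mathematical issue but would need adjusting if your argument were inserted verbatim.
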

\begin{proof}
  Clearly it suffices to treat the case where $M=M_1\oplus M_2$, where
  $\{\lambda_{1,j}\}_j$ consists of two terms, say $u,v$, and where
  $\{\lambda_{2,j}\}_j$ consists of only one term, say $w$. The
  hypothesis then says that $[u+v,w]^T\colon M_1\oplus M_2\to M$ is an
  isomorphism and the conclusion says that at least one of the two
  morphisms $[u,w]^T$ or $[v,w]^T$ is an isomorphism.

  Denote by
  $\overline M$ the factor module $M/\mathrm{Im}(w)$ and by $\pi\colon
  M\to \overline M$ the canonical surjection. Let $[s,t]\colon M\to
  M_1\oplus M_2$ be the inverse morphism of $[u+v,w]^T$. This amounts
  to the following equalities
  \begin{enumerate}
  \item $s\cdot (u+v) +tw = \mathrm{Id}_M$,
  \item $\left(
      \begin{array}{cc}
        (u+v)s& (u+v) t \\
        ws & wt
      \end{array}\right) =
    \left(
      \begin{array}{cc}
        \mathrm{Id}_{M_1}& 0 \\
        0 & \mathrm{Id}_{M_2}
      \end{array}\right)$.
  \end{enumerate}
  In particular, $s\colon M\to M_1$ induces a morphism $\overline
  s\colon \overline M\to M_1$ such that $\pi \overline s = s$. It
  satisfies
  \begin{itemize}
  \item $\pi \overline s \cdot (u+v) \pi = s\cdot (u+v) \pi = (s\cdot
    (u+v) + tw)\pi  = \pi$,
  \item $(u+v)\pi \overline s = (u+v)s = \mathrm{Id}_{M_1}$.
  \end{itemize}
  Hence, $(u+v)\pi\colon M_1\to \overline M$ is an isomorphism with
  inverse $\overline s$. Since $M_1$ is indecomposable, at least one
  of the two morphisms $u\pi \colon M_1\to \overline M$ or $v\pi \colon
  M_1\to \overline M$ is an isomorphism. Assume the former, then
  $\overline M=\mathrm{Im}(u\pi)$. Accordingly, $M=\mathrm{Im}(u)+\mathrm{Im}(w)$. Therefore, $[u,w]^T\colon M\to M$ is a surjection, and
  hence an isomorphism.
\end{proof}

\subsection{Properties on kernels of irreducible morphisms}
\label{sec:prop-kern-irred}

The following lemma  compares the kernel of an irreducible morphism
with the kernel of the corestriction to a proper direct summand of its codomain.
The reader may formulate the dual version of this lemma with a
completely analogous proof.
\begin{lem}
\label{sec:prop-kern-irred-1}
  Let $f=[f_1,f_2]\colon X\to Y_1\oplus Y_2$ be an irreducible
  morphism where $X\in\mathrm{ind}\,A$ and $Y_1,Y_2\in \mathrm{mod}\,A$ are
  nonzero. Let $i\in\{1,2\}$.
 If $f$ is an epimorphism then
    \begin{enumerate}[(a)]
    \item $\mathrm{Ker}(f)\not\simeq \mathrm{Ker}(f_i)$,
    \item $\mathrm{Ker}(f)$ and $\mathrm{Ker}(f_i)$ are non injective,
    \item $\mathrm{Ker}(f_i)$ is non simple and the middle term of an
      almost split sequence starting at $\mathrm{Ker}(f_i)$ is indecomposable.
    \end{enumerate}
\end{lem}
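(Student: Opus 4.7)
The plan is to reduce all three parts to a single structural short exact sequence derived from $f$. Because $f$ is a surjection, each component $f_i=\pi_i\circ f$ (where $\pi_i\colon Y_1\oplus Y_2\to Y_i$ is the projection) is also surjective; a direct diagram chase then yields, for $i\in\{1,2\}$, the short exact sequence
\[
0\to \mathrm{Ker}(f)\to \mathrm{Ker}(f_i)\xrightarrow{f_{3-i}|}Y_{3-i}\to 0\,.
\]
Since $f$ is irreducible and surjective, it cannot be an isomorphism, hence $\mathrm{Ker}(f)\neq 0$. Moreover, $f_i\neq 0$ for each $i$: otherwise $f$ factors as $\iota_{3-i}\circ f_{3-i}$ with $\iota_{3-i}\colon Y_{3-i}\hookrightarrow Y_1\oplus Y_2$ a split monomorphism but not a split epimorphism (since $Y_i\neq 0$), and irreducibility of $f$ forces $f_{3-i}$ to be a split monomorphism; combined with its surjectivity, $f_{3-i}$ would then be an isomorphism lying in $\mathrm{rad}$, a contradiction.

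Part~(a) is then a length count: the displayed sequence gives $\dim_{\k}\mathrm{Ker}(f_i)=\dim_{\k}\mathrm{Ker}(f)+\dim_{\k}Y_{3-i}>\dim_{\k}\mathrm{Ker}(f)$, hence $\mathrm{Ker}(f)\not\simeq\mathrm{Ker}(f_i)$. For part~(b), if $\mathrm{Ker}(f)$ were injective, the inclusion $\mathrm{Ker}(f)\hookrightarrow X$ would split, producing $X\simeq \mathrm{Ker}(f)\oplus Y_1\oplus Y_2$; since $Y_1, Y_2$ are both nonzero and $X$ is indecomposable, this is impossible. Similarly, $\mathrm{Ker}(f_i)$ injective would yield $X\simeq\mathrm{Ker}(f_i)\oplus Y_i$, forcing $\mathrm{Ker}(f_i)=0$, whence $f_i$ is an isomorphism lying in $\mathrm{rad}$, again a contradiction. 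The non-simplicity of $\mathrm{Ker}(f_i)$ in~(c) is then immediate from the same short exact sequence: $\dim_{\k}\mathrm{Ker}(f_i)\geq \dim_{\k}\mathrm{Ker}(f)+\dim_{\k}Y_{3-i}\geq 2$.

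The remaining assertion of~(c) — indecomposability of the middle term $M$ of the almost split sequence starting at $\mathrm{Ker}(f_i)$ — is the substantive step. The plan is first to show that the inclusion $\iota\colon\mathrm{Ker}(f_i)\to X$ is irreducible: it lies in $\mathrm{rad}$ (being a non-isomorphism for length reasons, once $\mathrm{Ker}(f_i)$ is seen to be indecomposable), while the delicate point is that $\iota\notin\mathrm{rad}^2$. I expect this to follow by contradiction — any factorization of $\iota$ in $\mathrm{rad}^2$, post-composed with $f_{3-i}$, should contradict the irreducibility of $f$ through Lemma~\ref{sec:making-section-from-2}, which is precisely tailored to extract a genuine section from a sum of candidate sections. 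Once $\iota$ is known to be irreducible, it appears as a component of the source morphism of the almost split sequence starting at $\mathrm{Ker}(f_i)$, so $X$ is a direct summand of $M$, say $M=X\oplus M'$. Comparing the exact sequence $0\to\mathrm{Ker}(f_i)\to M\to\tau^{-1}\mathrm{Ker}(f_i)\to 0$ with $0\to\mathrm{Ker}(f_i)\to X\to Y_i\to 0$ along this component then forces $M'=0$, since $\tau^{-1}\mathrm{Ker}(f_i)$ is indecomposable and $Y_i$ is nonzero. The main obstacle, as anticipated, is the irreducibility of $\iota$, which is where the finer structural material developed later in the preliminaries may have to be invoked.
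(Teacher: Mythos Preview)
Your treatment of (a), (b) and the non-simplicity clause of (c) is correct and is essentially the paper's argument, reorganised through the derived exact sequence $0\to\mathrm{Ker}(f)\to\mathrm{Ker}(f_i)\to Y_{3-i}\to 0$ rather than the two sequences $0\to\mathrm{Ker}(f)\to X\to Y_1\oplus Y_2\to 0$ and $0\to\mathrm{Ker}(f_i)\to X\to Y_i\to 0$ that the paper uses.

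The gap is in the remaining assertion of (c). The paper does not argue this directly at all: having the non-split sequence $0\to\mathrm{Ker}(f_i)\to X\to Y_i\to 0$ with $X$ indecomposable and $\mathrm{Ker}(f_i)$ non-simple (the latter because $\mathrm{Ker}(f)$ is a proper nonzero indecomposable submodule), it simply invokes a theorem of Krause (with a parallel reference to Brenner) to conclude that the middle term of the almost split sequence starting at $\mathrm{Ker}(f_i)$ is indecomposable. That citation is the entire content of the step.

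Your proposed route does not work as written. Three issues. First, you assume $\mathrm{Ker}(f_i)$ is indecomposable but never prove it; without this neither ``$\iota\in\mathrm{rad}$'' nor ``the almost split sequence starting at $\mathrm{Ker}(f_i)$'' is available. Second, Lemma~\ref{sec:making-section-from-2} concerns selecting one summand out of a sum that already yields an automorphism of a fixed module; it has no evident bearing on showing that a given inclusion avoids $\mathrm{rad}^2$. Third, and decisively, your final inference is false in general. Over the path algebra of linear $\mathbb A_3$, the inclusion $\iota\colon P_2\hookrightarrow P_1$ is irreducible with cokernel $S_1\neq 0$ and $\tau^{-1}P_2\simeq I_2$ indecomposable, yet the almost split sequence starting at $P_2$ is $0\to P_2\to P_1\oplus S_2\to I_2\to 0$, so $M'=S_2\neq 0$. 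This configuration does not satisfy the hypotheses of the lemma, but it shows that the data ``$\iota$ irreducible, $\tau^{-1}\mathrm{Ker}(f_i)$ indecomposable, $Y_i\neq 0$'' cannot by themselves force $M'=0$; some further input, such as the cited result, is genuinely needed.
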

\begin{proof}
  There is no loss of generality assuming that $i=1$.
  Note that $f_1$ is an epimorphism because so is $f$. There are
  short exact sequences $0\to \mathrm{Ker}(f)\to X\xrightarrow f
  Y_1\oplus Y_2\to 0$ and $0\to \mathrm{Ker}(f_1)\to X\xrightarrow{f_1}
  Y_1\to 0$ in $\mathrm{mod}\,A$. Since $\mathrm{dim}_{\k}Y_1<\mathrm{dim}_{\k}
  (Y_1\oplus Y_2)$, a length argument shows that $\mathrm{Ker}(f)\not\simeq
  \mathrm{Ker}(f_1)$. This proves $(a)$. Using the same exact sequences,
  the fact that $X\in \mathrm{ind}\,A$ entails that $\mathrm{Ker}(f)$ and
  $\mathrm{Ker}(f_1)$ are non injective. This proves $(b)$. In order to
  prove $(c)$ it suffices to prove that $\mathrm{Ker}(f_1)$ is
  non simple, thanks to \cite{Kra} (see also
  \cite{Bre}). Note that $\mathrm{Ker}(f)=\mathrm{Ker}(f_1)\cap \mathrm{Ker}(f_2)$ is a proper (and indecomposable) submodule of $\mathrm{Ker}(f_1)$. Hence $\mathrm{Ker}(f_1)$ is non simple. This proves
  $(c)$.
\end{proof}

The following lemma is  proved in \cite[Theorem 3.2]{CD} for algebras over algebraically
closed fields. The result given there still works for algebras over
artin rings. Its main argument is recalled below for the convenience of
the reader. As usual, given a morphism $X\to Y$ in $\mathrm{mod}\,A$, a \emph{kernel (morphism)} for $f$ is a morphism
$K\to X$ such that the induced sequence of functors $0\to \mathrm{Hom}_A(-,K)\to \mathrm{Hom}_A(-,X)\to \mathrm{Hom}_A(-,Y)$ is exact.

\begin{lem}
  \label{sec:appl-comp-irred-1}
  Assume that $A$ is an artin algebra over an artin ring.
  Let $0\to K\xrightarrow{i}X \xrightarrow{f} Y\to 0$ be an exact sequence in $\mathrm{mod}\,A$ such that $X$ is indecomposable and $f$ is irreducible. Assume that
  there exists $n\in \mathbb N$ such that $i\in \mathrm{rad}^n\backslash\mathrm{rad}^{n+1}$.
  \begin{enumerate}
  \item For any morphism $K\to X$ lying in $\mathrm{rad}^n\backslash\mathrm{rad}^{n+1}$, there exists an automorphism $X\to X$ such that the
    composition morphism $K\to X\to X$ is a kernel morphism of $f$.
  \item If $X$ is indecomposable, then there exists a path $X_0=K\to
    X_1\to \cdots \to X_{n-1}\to
    X_n=X$ of irreducible morphisms between indecomposables and with
    composition equal to a kernel morphism of $f$.
  \end{enumerate}
\end{lem}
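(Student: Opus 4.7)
The plan is to prove (1) first and derive (2) as a straightforward consequence. For (1), the goal is to modify the given morphism $j \colon K \to X$ by an automorphism of $X$ so as to align it with some $i \beta$ for $\beta \in \mathrm{Aut}(K)$. Applying $\mathrm{Hom}_A(K, -)$ to the short exact sequence $0 \to K \xrightarrow{i} X \xrightarrow{f} Y \to 0$ yields left exactness, which identifies the kernel of $f_* \colon \mathrm{Hom}_A(K, X) \to \mathrm{Hom}_A(K, Y)$ with $i \cdot \mathrm{End}_A(K)$. This is the algebraic backbone of the argument.

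First I would analyse $f j \in \mathrm{rad}^{n+1}(K, Y)$ and attempt to lift it along $f_*$, that is, produce some $j_1 \in \mathrm{rad}^{n+1}(K, X)$ with $f j_1 = f j$. Granting this, the morphism $j - j_1$ factors through $i$ as $j - j_1 = i \alpha$ for some $\alpha \in \mathrm{End}_A(K)$. Since $j \notin \mathrm{rad}^{n+1}$ and $j_1 \in \mathrm{rad}^{n+1}$, we get $i \alpha \notin \mathrm{rad}^{n+1}$, so combining with $i \in \mathrm{rad}^n \setminus \mathrm{rad}^{n+1}$ forces $\alpha$ to be non-radical in a suitable sense. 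Applying Lemma~\ref{sec:making-section-from-2} to a Krull--Schmidt decomposition of $K$, together with the local endomorphism rings of the indecomposable summands, I would then extract an automorphism $\beta \in \mathrm{Aut}(K)$ such that $i \beta$ is a kernel morphism for $f$. Finally, the automorphism $\varphi \in \mathrm{Aut}(X)$ is constructed by a Nakayama-type argument exploiting that $\mathrm{End}_A(X)$ is local (since $X$ is indecomposable) and that the correction $j - i \beta$ lies in $\mathrm{rad}^{n+1}(K, X)$.

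For (2), assuming $K$ indecomposable (so that the path can start at $K$), I would use that $\mathrm{rad}^n(K, X) / \mathrm{rad}^{n+1}(K, X)$ between indecomposables is spanned by compositions of $n$ irreducible morphisms through indecomposables to write the class of $i$ as such a sum. Since $i \notin \mathrm{rad}^{n+1}$, at least one composition $c = h_n \cdots h_1$ along a path $K = X_0 \to X_1 \to \cdots \to X_n = X$ must lie in $\mathrm{rad}^n \setminus \mathrm{rad}^{n+1}$. Applying (1) to $c$ yields $\varphi \in \mathrm{Aut}(X)$ with $\varphi c$ a kernel morphism; replacing the last irreducible morphism $h_n$ by $\varphi \circ h_n$ (which remains irreducible, since $\varphi$ is an automorphism) gives the desired path.

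The hardest part will be the lifting of $f j$ along $f_*$ in the first step of (1): proving that any element of the image of $f_*$ lying in $\mathrm{rad}^{n+1}(K, Y)$ admits a preimage already in $\mathrm{rad}^{n+1}(K, X)$. This ``strictness'' of the filtration under $f_*$ relies crucially on the specific hypothesis that $i$ is a kernel morphism precisely at level $\mathrm{rad}^n \setminus \mathrm{rad}^{n+1}$, together with the irreducibility of $f$ and the indecomposability of $X$. Constructing $\varphi$ from the extracted data $(\alpha, \beta, j_1)$ is the other delicate point, where Lemma~\ref{sec:making-section-from-2} provides the technical support to deal with a possibly decomposable~$K$.
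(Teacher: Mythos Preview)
Your argument for part~(2) matches the paper's: decompose the class of $i$ modulo $\mathrm{rad}^{n+1}$ as a sum of length-$n$ paths, pick one not in $\mathrm{rad}^{n+1}$, and apply~(1). The issues are all in part~(1).

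The paper's proof of~(1) is a single citation of \cite[Proposition~V.5.7]{ARS}: because $f$ is irreducible, any morphism $u\colon K\to X$ either factors through $i$ or has $i$ factoring through it. Since both $i$ and $u$ lie in $\mathrm{rad}^n\setminus\mathrm{rad}^{n+1}$ and $K,X$ are indecomposable with local endomorphism rings, the factoring endomorphism is forced to be invertible, and the conclusion is immediate.

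Your route instead rests on the lifting step: given $j\in\mathrm{rad}^n(K,X)$, produce $j_1\in\mathrm{rad}^{n+1}(K,X)$ with $f j_1 = f j$. You correctly identify this as the crux but offer no argument beyond listing the hypotheses. Unpacking the claim, it says precisely that $\mathrm{rad}^n(K,X)\subseteq \mathrm{rad}^{n+1}(K,X)+i\cdot\mathrm{End}_A(K)$; this is not a formal property of radical filtrations, and proving it requires exactly the ARS characterisation of irreducible morphisms that the paper invokes directly. So the gap is genuine: your scaffold stands only once one has already proved (or cited) the fact that makes the paper's one-line argument work.

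Two side remarks. First, $K$ is automatically indecomposable here (it is the kernel of an irreducible epimorphism with indecomposable domain), so the appeal to Lemma~\ref{sec:making-section-from-2} and the Krull--Schmidt bookkeeping are unnecessary. Second, once the ARS dichotomy is in hand, no separate Nakayama-type construction of $\varphi$ is needed: either $u$ is already a kernel morphism (take $\varphi=\mathrm{id}_X$), or $i=u\varphi$ for some $\varphi\in\mathrm{Aut}(X)$ directly.
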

\begin{proof}
  (1) This is obtained upon applying \cite[Proposition 5.7,
  p. 173]{ARS} to the given morphism $u\colon K\to X$. The cited
  result asserts that
  either $u$ factors through $i$, or $i$ factors through $u$.

  \medskip

  (2) The class of $i$ modulo $\mathrm{rad}^{n+1}$ is a non trivial sum of
  compositions of paths of length $n$. Since $i\not\in \mathrm{rad}^{n+1}$, at least one of these paths has composition not lying
  in $\mathrm{rad}^{n+1}$. Using (1), such a path fits the conclusion (up
  to composition of the last
  morphism of that path with an automorphism of $X$).
\end{proof}

\subsection{Categories with length}
\label{sec:categ-with-length}

Some proofs in this text make use of specific functors taking values
in $\mathrm{ind}\,A$. These functors are defined over categories with
length. This subsection defines these categories.

Let $\mathcal C$ be
a $\k$-linear category. Assume that $\mathrm{dim}_\k\mathcal
C(x,y)<\infty$ for every $x,y\in \mathcal C$. Assume also that
distinct objects in $\mathcal C$ are not isomorphic. Define $\mathcal
R\mathcal C$ to be the ideal of $\mathcal C$ consisting of those
morphisms that are not invertible. The powers of $\mathcal R\mathcal
C$ are defined recursively by $\mathcal R^0\mathcal C=\mathcal C$,
$\mathcal R^1\mathcal C=\mathcal{RC}$ and $\mathcal R^{\ell+1}
\mathcal C=\mathcal R^{\ell}\mathcal C\cdot
\mathcal{RC}=\mathcal{RC}\cdot \mathcal R^{\ell}\mathcal C$. A
morphism in $\mathcal C$ is called \emph{irreducible} if it lies in
$\mathcal{RC}$ and not in $\mathcal R^2\mathcal C$. A \emph{path} (of
irreducible morphisms) in $\mathcal C$ is a sequence
$x_0\xrightarrow{\varphi_1} x_1\to \cdots \to
x_{\ell-1}\xrightarrow{\varphi_n}x_n$ where $x_0,\ldots,x_n\in
\mathcal C$ and $\varphi_i$ is an irreducible morphism from $x_{i-1}$
to $x_i$ for every $i$; By definition the length of such a path is
$n$.

A category \emph{with length} is a $\k$-linear category $\mathcal C$
as above such that
\begin{enumerate}[(a)]
\item for every $x,y\in \mathcal C$, the paths of irreducible
  morphisms from $x$ to $y$ all have the same length,
\item $\bigcap_{n\geqslant 0}\mathcal R^n\mathcal C=0$ (in particular,
  any morphism in $\mathcal C$ is a sum of compositions of paths of
  irreducible morphisms in $\mathcal C$).
\end{enumerate}
Note that, for such a category, $\mathcal C(x,x)$ is a division
$\k$-algebra for every $x$.

As an example, the mesh category of any modulated
translation quiver (\cite[Section 1.2]{IT}) with length is a category
with length. Recall that
a quiver with length is a quiver such that any two parallel paths have
the same length.
The categories with length used in this text are all of this
shape. However, the presentation here sticks to the general setting in
order to avoid unnecessary technicalities.

The following proposition is used throughout the text. It follows from
the definitions.
\begin{prop}
  \label{prop:with_length}
  Let $\mathcal C$ be a $\k$-linear category with length and $x,y\in
  \mathcal C$. If there exists a path of irreducible morphisms from
  $x$ to $y$ in $\mathcal C$ and with length denoted by $\ell$, then
  \begin{enumerate}
  \item $\mathcal C(x,y)=\mathcal{RC}(x,y)=\cdots =\mathcal
    R^{\ell}\mathcal C(x,y)$,
  \item $\mathcal R^i\mathcal C(x,y)=0$ for every $i>\ell$,
  \item if there exists an irreducible morphism $x\to y$, then
    $\mathcal C(x,y)\backslash\{0\}$ consists of irreducible
    morphisms,
  \item for every $z\in \mathcal C$, if there exists a path of
    irreducible morphisms from $y$ to $z$ with length denoted by
    $\ell'$, then $\mathcal R^i\mathcal C(x,z)=0$, for every $i>\ell+\ell'$.
  \end{enumerate}
\end{prop}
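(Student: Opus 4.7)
The plan is to reduce each item to the two defining properties of a category with length: condition (a), that all paths of irreducible morphisms from $x$ to $y$ share one length, and condition (b), that every morphism decomposes as a sum of compositions of such paths.

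For (1), I would begin by noting that the case $x=y$ forces $\ell=0$ via (a), since the trivial length-$0$ path at $x$ is always available, so the statement reduces to $\mathcal C(x,x)=\mathcal C(x,x)$. When $x\neq y$, distinct objects are non-isomorphic in $\mathcal C$ by assumption, hence $\mathcal C(x,y)=\mathcal{RC}(x,y)$, and the inclusions $\mathcal C(x,y)\supseteq \mathcal{RC}(x,y)\supseteq \cdots \supseteq \mathcal R^{\ell}\mathcal C(x,y)$ are automatic. For the reverse inclusion $\mathcal C(x,y)\subseteq \mathcal R^{\ell}\mathcal C(x,y)$, I would apply (b) to any $f\in \mathcal C(x,y)$, writing it as a sum of compositions of paths of irreducible morphisms from $x$ to $y$; then (a) forces each such path to have length exactly $\ell$, so every summand lies in $\mathcal R^{\ell}\mathcal C(x,y)$, and therefore so does $f$.

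For (2), I would take an arbitrary generator $g_1\cdots g_i\in \mathcal R^i\mathcal C(x,y)$ with $i>\ell$ and $g_j\in \mathcal{RC}(u_{j-1},u_j)$ along some sequence $x=u_0,u_1,\ldots,u_i=y$. Any factor with $u_{j-1}=u_j$ lies in $\mathcal{RC}(u_j,u_j)=0$, since $\mathcal C(u_j,u_j)$ is a division $\k$-algebra, and kills the term; in the remaining cases, (b) expresses each $g_j$ as a sum of compositions of paths of irreducible morphisms from $u_{j-1}$ to $u_j$, each of length $\ell_j\geqslant 1$ since $u_{j-1}\neq u_j$. Concatenation then realises $g_1\cdots g_i$ as a sum of compositions of paths from $x$ to $y$ of total length $\ell_1+\cdots+\ell_i\geqslant i>\ell$, contradicting the uniqueness of length in (a); hence $\mathcal R^i\mathcal C(x,y)=0$. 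Item (3) is then immediate: an irreducible arrow $x\to y$ is a path of length $1$, so (a) pins $\ell=1$, and then (1) gives $\mathcal C(x,y)=\mathcal{RC}(x,y)$ while (2) gives $\mathcal R^2\mathcal C(x,y)=0$, so every nonzero $f\in\mathcal C(x,y)$ sits in $\mathcal{RC}(x,y)\setminus \mathcal R^2\mathcal C(x,y)$, which is by definition the set of irreducible morphisms.

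Item (4) is the same argument as (2) with $y$ replaced by $z$ and $\ell$ replaced by $\ell+\ell'$: concatenating the given paths $x\to \cdots \to y$ and $y\to \cdots \to z$ produces a path of length $\ell+\ell'$ from $x$ to $z$, so (a) forces every path from $x$ to $z$ to share that common length; if $g_1\cdots g_i\in \mathcal R^i\mathcal C(x,z)$ were nonzero with $i>\ell+\ell'$, concatenating the paths supplied by (b) would produce a path from $x$ to $z$ of length at least $i>\ell+\ell'$, which is impossible. The main (if mild) obstacle throughout is the bookkeeping of step (2): one must verify that applying (b) in each intermediate hom-space $\mathcal{RC}(u_{j-1},u_j)$ and concatenating the resulting paths really does yield a single bona fide path of irreducible morphisms from $x$ to $y$ of length at least $i$, rather than merely a product of shorter fragments. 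Once that concatenation is clearly justified, the proposition follows as a formal consequence of axioms (a) and (b).
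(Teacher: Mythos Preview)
Your proposal is correct and is precisely the elaboration the paper has in mind: the paper's own proof is the single sentence ``It follows from the definitions,'' and what you have written is exactly the unpacking of that sentence via axioms (a) and (b). Your mild worry about the bookkeeping in (2) is unfounded---once each nonzero $g_j$ is rewritten via (b) as a sum of compositions of genuine paths of irreducibles (each of common length $\ell_j\geqslant 1$ by (a), since $u_{j-1}\neq u_j$), distributing the product and concatenating yields honest paths from $x$ to $y$ of length $\sum_j\ell_j\geqslant i>\ell$, which (a) forbids; hence every generator of $\mathcal R^i\mathcal C(x,y)$ vanishes.
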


\subsection{Functors with the covering property}
\label{sec:funct-with-cover}

Let $\Gamma$ be an Auslander-Reiten component of $A$. Let $\mathcal C$
be a $\k$-linear category with length and $F\colon \mathcal C\to \mathrm{ind}\,\Gamma$ be a $\k$-linear functor. This subsection introduces
the class of functors $F$ that are used in the proof of the main
results of this text.

\subsubsection{Definition and basic features}
\label{sec:defin-basic-feat}

By definition, $F$ is said to have the \emph{covering property} if it
satisfies the following conditions:
\begin{enumerate}[(a)]
\item for every $X\in \Gamma$, there exists $x\in \mathcal C$ such
  that $Fx=X$,
\item $F(\mathcal{RC})\subseteq \mathrm{rad}$,
\item for every $x,y\in \mathcal C$ and $n\geqslant 0$, the two following maps induced by
  $F$ are bijective
  \[
  \begin{array}{rclc}
    \bigoplus\limits_{Fz=Fy}\mathfrak{R}^n\k(\widetilde{\Gamma})(x,z)
    /
    \mathfrak{R}^{n+1}\k(\widetilde{\Gamma})(x,z)
    & \to &
            \mathrm{rad}^n(Fx,Fy)/\mathrm{rad}^{n+1}(Fx,Fy) \\
    \\
    \bigoplus\limits_{Fz=Fy}\mathfrak{R}^n\k(\widetilde{\Gamma})(z,x)/
    \mathfrak{R}^{n+1}\k(\widetilde{\Gamma})(z,x)
    & \to &
            \mathrm{rad}^n(Fy,Fx)/\mathrm{rad}^{n+1}(Fy,Fx), &
  \end{array}
  \]
\item for every $x,y\in \mathcal C$, the two following maps induced
  by $F$ are injective
  \[
  \bigoplus\limits_{Fz=Fy}\k(\widetilde{\Gamma})(x,z) \to
  \mathrm{Hom}_A(Fx,Fy)
  \ \ and\ \
  \bigoplus\limits_{Fz=Fy}\k(\widetilde{\Gamma})(z,x) \to
  \mathrm{Hom}_A(Fy,Fx),
  \]
\item for every $x\in \mathcal C$ and $Y\in \Gamma$, there exists at
  most one $y\in \mathcal C$ such that $Fy=Y$ and $\mathcal C(x,y)$ contains an
  irreducible morphism,
\item for every $X\in \Gamma$ and $y\in \mathcal C$, there exists at
  most one $x\in \mathcal C$ such that $Fx=X$ and $\mathcal C(x,y)$
  contains an irreducible morphism.
\end{enumerate}
When $F$ has the covering property, it induces a $\k$-algebra
isomorphism
\[
\mathcal C(x,x)/\mathcal{RC}(x,x)\xrightarrow{\sim}\mathrm{End}_A(Fx)/\mathrm{rad}(Fx,Fx)
\]
for every $x\in \mathcal C$.
Also it
induces a linear isomorphism
$\mathcal{RC}(x,y)/\mathcal R^2\mathcal C(x,y)\xrightarrow{\sim}\mathrm{irr}(Fx,Fy)$ for every $x,y\in \mathcal C$ such that there exists an
irreducible morphism $x\to y$ in $\mathcal C$.

Well-behaved functors in the sense of \cite{CLT1,CLT2} are examples
of functors with the covering property where $\mathcal C$ equals the
mesh category of the universal cover of the modulated translation
quiver $\Gamma$. The following result plays a central role in the
present text.
\begin{prop}[Proposition 2.5 of \cite{CLT2}]
  \label{subsec_wellbehavedexistence}
  Let $A$ be a finite dimensional algebra over a perfect field
  $\k$. Let $\Gamma$ be an Auslander-Reiten component of $A$.
  \begin{enumerate}
  \item There exists a $\k$-linear functor $F\colon \mathcal C\to
    \mathrm{ind}\,\Gamma$ with the covering property.
  \item Let $X\in \Gamma$ and let
    $f\colon X\to \oplus_{i=1}^rX_i^{n_i}$ be a freely  irreducible
    morphism where $X_1,\ldots,X_r$ are pairwise non isomorphic. There
    exists a $\k$-linear functor $F\colon \mathcal C\to \mathrm{ind}\,\Gamma$
    with the covering property, and there exist $x\in F^{-1}X$,
    $x_i\in F^{-1}X_i$ ($1\leqslant i\leqslant r$) such that each
    component $X\to X_i$ of $f$ lies in the image of the mapping
    $\mathcal C(x,x_i)\to \mathrm{Hom}_A(X,X_i)$ induced by $F$.
  \end{enumerate}
\end{prop}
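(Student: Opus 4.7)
The plan is to construct $\mathcal{C}$ and $F$ via covering theory for modulated translation quivers, in the spirit of Bongartz-Gabriel and Riedtmann, following the approach in \cite{IT}. First I endow $\Gamma$ with the structure of a modulated translation quiver: at each vertex $X$ place the division algebra $\kappa_X$, at each arrow $X\to Y$ place the bimodule $\mathrm{irr}(X,Y)$, and take the translation to be $\tau_A$. The perfectness of $\k$ is crucial here, since it ensures $\kappa_X\otimes_{\k}\kappa_Y^{\mathrm{op}}$ is semisimple and so the bimodule $\mathrm{irr}(X,Y)$ splits as expected. Then I build a simply connected covering $\pi\colon \tilde\Gamma \to \Gamma$ of modulated translation quivers and set $\mathcal{C}=\k(\tilde\Gamma)$, the mesh category. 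Simple connectedness of $\tilde\Gamma$ ensures that any two parallel paths have the same length, so $\mathcal{C}$ is a category with length in the sense of Section~\ref{sec:categ-with-length}. I define $F$ on objects via the projection $\pi$, on arrows of $\tilde\Gamma$ by choosing specific lifts in the bimodules on the corresponding arrows of $\Gamma$, and extend linearly; the mesh relations in $\mathcal{C}$ correspond to almost split sequences in $\Gamma$ and thus are sent to zero.

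To verify the covering property (a)--(f), I note that (a) is built in, (b) holds because $F(\mathcal{RC})$ is generated by images of irreducibles, and (e) and (f) are immediate, since an arrow of $\tilde\Gamma$ has source and target uniquely determined among preimages by the covering structure. The heart of the argument is (c): the bijection at each graded level combines Auslander-Reiten theory (which describes $\mathrm{rad}^n/\mathrm{rad}^{n+1}$ between modules in $\Gamma$ as the space of compositions of paths of length $n$ modulo the mesh relations) with the separation of preimages provided by $\pi$. Property (d) then follows from (c) by induction, using that $\bigcap_n \mathcal{R}^n\mathcal{C}(x,y)=0$ in a category with length.

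For part (2), the construction must be adapted to $f$. Fix $x\in F^{-1}(X)$; for each $i$, property (e) ensures that at most one $x_i\in F^{-1}(X_i)$ admits an irreducible morphism $x\to x_i$, and the quotient $\mathcal{RC}(x,x_i)/\mathcal{R}^2\mathcal{C}(x,x_i)$ is then one of the summands of $\mathrm{irr}(X,X_i)$ that appear in (c). Using the freedom in constructing $\tilde\Gamma$ and in choosing the lifts defining $F$ on arrows, I arrange that the $n_i$-tuple $(\overline{f_{i,j}})_j$ generates this summand over $\kappa_X\otimes_{\k}\kappa_{X_i}^{\mathrm{op}}$, so that each component $f_{i,j}$ is the image under $F$ of some $\varphi_{i,j}\in \mathcal{C}(x,x_i)$. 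This is possible precisely because $(\overline{f_{i,j}})_j$ is a free family over $\kappa_X\otimes_{\k}\kappa_{X_i}^{\mathrm{op}}$.

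The main obstacle is the verification of~(c), which requires combining Auslander-Reiten theory, the perfectness of $\k$ (to guarantee the semisimplicity of the relevant tensor products of division algebras), and a combinatorial argument showing that distinct preimages of a given module contribute independently to $\mathrm{rad}^n/\mathrm{rad}^{n+1}$. A secondary difficulty lies in part (2): once $\tilde\Gamma$ and its arrow lifts are chosen, one must check that all components $f_{i,j}$ can simultaneously be placed in a single fibre above $X_i$, which is exactly the point where the freely irreducible hypothesis is indispensable.
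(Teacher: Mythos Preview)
The paper does not give a proof of this proposition: it is quoted verbatim as Proposition~2.5 of \cite{CLT2} and used as a black box. Your outline is precisely the construction alluded to in the paper when it remarks that ``well-behaved functors in the sense of \cite{CLT1,CLT2} are examples of functors with the covering property where $\mathcal C$ equals the mesh category of the universal cover of the modulated translation quiver $\Gamma$''. So there is nothing to compare against here; your approach is the intended one and the sketch is sound.

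Two minor remarks on your write-up. First, your phrasing in part~(2) is slightly off: the point is not that the tuple $(\overline{f_{i,j}})_j$ \emph{generates} a summand, but that its freeness over $\kappa_X\otimes_\k\kappa_{X_i}^{\mathrm{op}}$ allows it to be completed to a bimodule basis of $\mathrm{irr}(X,X_i)$, so that the section $\mathrm{irr}(X,X_i)\to \mathrm{rad}(X,X_i)$ used to define $F$ on the arrow $x\to x_i$ can be chosen to send $\overline{f_{i,j}}$ to $f_{i,j}$ exactly (not merely modulo $\mathrm{rad}^2$). Second, the verification of~(c) is the genuinely laborious part of the argument in \cite{CLT2}; your one-sentence summary is correct in spirit but hides the inductive comparison of mesh relations in $\k(\tilde\Gamma)$ with almost split sequences in $\mathrm{mod}\,A$, which is where the perfectness of $\k$ is actually used.
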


\subsubsection{Homogeneous morphisms}
\label{sec:homog-morph-1}

The notion of homogeneous morphism that is considered in this text is
relative to a given functor  $F\colon \mathcal C\to \mathrm{ind}\,\Gamma$
with the covering property. It is used to express the assumptions in
the main results of this text.

Recall from the introduction that, a morphism $X\to Y$ in $\mathrm{mod}\,A$ is called homogeneous if
there exist tuples  $(x_s)_s$ and $(y_t)_t$ of objects in
$\mathcal C$ and a tuple of morphisms $(\varphi_{s,t})_{s,t}\in
\oplus_{s,t}\mathcal C(x_s,y_t)$ such that $X=\oplus_x Fx_s$,
such that $Y=\oplus_t Fy_t$ and such that, for every $s,t$, the component
$Fx_s\to Fy_t$
of the given morphism $X\to Y$ is equal to $F(\varphi_{s,t})$. In such a case, the
morphism is called homogeneous with respect to $(y_t)_t$ (or to
$(x_s)_s$) if it is necessary to work with a predetermined
decomposition $Y=\oplus_tFy_t$ (or $X=\oplus_sFx_s$,
respectively). Note that, if the given morphism $X\to Y$ lies in $\mathrm{rad}^m$ for some $m$, then it is necessary that $\varphi_{s,t}\in
\mathcal R^m\mathcal C$ for every $s,t$.

Given $d\in \mathbb
N$, a morphism $f\colon X\to Y$ is called homogeneous \emph{up to $\mathrm{rad}^{d+1}$} if it is the sum of two morphisms from $X$ to $Y$,
the former being homogeneous and the latter lying in $\mathrm{rad}^{d+1}$. If the former morphism is equal to
$[F(\varphi_{s,t})\ ;\ s,t]\colon X\to Y$ (with the previous notation),
then each $\varphi_{s,t}$ may be chosen to be zero when it lies in
$\mathcal R^{d+1}\mathcal C$. Note that this condition makes the
decomposition of $f$ unique. In such a case,
the morphism $[F(\varphi_{s,t})\ ;\ s,t]\colon X\to Y$ is called the
\emph{homogeneous part} of $f$.

For instance, for every $X\in \Gamma$, any irreducible morphism
$X\to \oplus_t Y_t$ is homogeneous up to $\mathrm{rad}^2$ (with respect
to any given $x\in \mathcal C$ such that $Fx=X$, and for any given
$F\colon \mathcal C\to \mathrm{ind}\,\Gamma$). Also, when this
irreducible morphism is freely irreducible, then there exists a
functor with the covering property for which that irreducible morphism
is homogeneous (see Proposition~\ref{subsec_wellbehavedexistence}).

The following lemma is useful in the proof of the main results of this
text. It deals with homogeneous monomorphisms. Consider the following
setting. Let $i\colon K\to X$ be a
monomorphism in $\mathrm{mod}\,A$ where every indecomposable direct summand
of $X$ lies in $\Gamma$. Let $X=\oplus_sX_s$ be a direct sum
decomposition into indecomposable modules (with $X_s\in \Gamma$, for
every $s$) and, for every $s$, let $x_s\in \mathcal C$ be such that
$Fx_s=X_s$. Fix a direct sum decomposition
$K=K^{(\infty)}\oplus \left( \oplus_{m\geqslant
    0}\oplus_rK^{(m)}_r\right)$ such that each $K^{(m)}_r$ is
indecomposable, such that the restriction $K^{(\infty)}\to X$ of $i$
lies in $\mathrm{rad}^\infty$ and such that each restriction
$K^{(m)}_r\to X$ of $i$ lies in $\mathrm{rad}^m\backslash\mathrm{rad}^{m+1}$. For simplicity, $\oplus_rK^{(m)}_r$ is denoted by
$K^{(m)}$ for every $m$.
\begin{lem}
  \label{sec:funct-with-cover-1}
  Under the previous setting, assume that the restriction
  $\oplus_{m,r}K^{(m)}_r\to X$ of $i$ is homogeneous with
  respect to $(x_s)_s$.
  Then, for every
  $\ell\geqslant n$, $Z\in \Gamma$ and $h\in \mathrm{Hom}_A(Z,K)$, the
  following conditions are equivalent
  \begin{enumerate}[(i)]
  \item $hi\in \mathrm{rad}^{\ell}$,
  \item for every $m$, the component $Z\to K^{(m)}$ of $h$ lies in
    $\mathrm{rad}^{\ell-m}$.
  \end{enumerate}
\end{lem}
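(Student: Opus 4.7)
The plan is to prove the two directions separately. For (ii) $\Rightarrow$ (i), decompose $h$ along the direct sum decomposition of $K$ so that $hi = h_\infty \cdot i|_{K^{(\infty)}} + \sum_m h^{(m)} \cdot i|_{K^{(m)}}$. The first summand lies in $\mathrm{rad}^\infty \subseteq \mathrm{rad}^\ell$ by hypothesis, and for each $m$ the summand $h^{(m)} \cdot i|_{K^{(m)}}$ lies in $\mathrm{rad}^{\ell-m} \cdot \mathrm{rad}^m \subseteq \mathrm{rad}^\ell$ (with the convention $\mathrm{rad}^k = \mathrm{mod}\,A$ for $k \leqslant 0$, which handles the case $m > \ell$). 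Summing yields $hi \in \mathrm{rad}^\ell$.

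For (i) $\Rightarrow$ (ii), I would proceed by induction on $\ell \geqslant n$. The base case $\ell = n$ is vacuous: since $K^{(m)} = 0$ for $m < n$, the condition $h^{(m)} \in \mathrm{rad}^{n-m}$ for $m \geqslant n$ has non-positive exponent. For the inductive step, $hi \in \mathrm{rad}^\ell \subseteq \mathrm{rad}^{\ell-1}$ combined with the inductive hypothesis yields $h^{(m)} \in \mathrm{rad}^{\ell-1-m}$ for every $m$; what remains is to upgrade each such inclusion by one unit of radical depth.

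The upgrade exploits the covering property together with the homogeneity of $i$ on $\oplus_{m,r}K^{(m)}_r$. Fix $z \in F^{-1}Z$, and let $y^{(m)}_r \in F^{-1}K^{(m)}_r$ and $\psi^{(m,r)}_s \in \mathcal C(y^{(m)}_r, x_s)$ be the data witnessing the homogeneity of $i$, so that the component $K^{(m)}_r \to X_s$ of $i$ equals $F(\psi^{(m,r)}_s)$. By the bijection in condition (c) of the covering property, the class of $h^{(m)}_r$ in $\mathrm{rad}^{\ell-1-m}(Z, K^{(m)}_r)/\mathrm{rad}^{\ell-m}$ is represented by a tuple $(\alpha^{(m,r)}_y)_{y \in F^{-1}K^{(m)}_r}$ with $\alpha^{(m,r)}_y \in \mathcal R^{\ell-1-m}\mathcal C(z, y)$, and the desired conclusion $h^{(m)}_r \in \mathrm{rad}^{\ell-m}$ is equivalent to this tuple vanishing modulo $\mathcal R^{\ell-m}\mathcal C$. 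Projecting $hi \in \mathrm{rad}^\ell$ onto each $X_s$ and applying the corresponding covering bijection identifying $\mathrm{rad}^{\ell-1}(Z, X_s)/\mathrm{rad}^\ell$ with $\oplus_{x \in F^{-1}X_s} \mathcal R^{\ell-1}\mathcal C(z, x)/\mathcal R^\ell\mathcal C$ translates the hypothesis into a system of vanishings in $\mathcal C$ on sums of compositions of the $\alpha^{(m,r)}_y$'s with the $\psi^{(m,r)}_s$'s.

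The main obstacle is to extract from this system the vanishing of each $\alpha^{(m,r)}_y$ modulo $\mathcal R^{\ell-m}\mathcal C$. The plan is to combine the uniqueness conditions (e) and (f) of the covering property, which constrain the preimage of $X_s$ through which each composition $F(\alpha^{(m,r)}_y) \cdot F(\psi^{(m,r)}_s)$ is routed, with the injectivity in condition (d) at the ungraded Hom level, so as to decouple the contributions indexed by distinct $(m, r, y)$. A final application of the length property of $\mathcal C$ (Proposition~\ref{prop:with_length}) to the resulting single-preimage relations will then yield the required $\alpha^{(m,r)}_y \in \mathcal R^{\ell-m}\mathcal C$, closing the induction.
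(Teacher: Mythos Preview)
Your direction (ii) $\Rightarrow$ (i) is correct and matches the paper.

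For (i) $\Rightarrow$ (ii), there are two problems.

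First, a bookkeeping issue: you fix $z \in F^{-1}Z$ and index the lift of $h^{(m)}_r$ by $y \in F^{-1}K^{(m)}_r$. But the homogeneity data singles out one specific $y^{(m)}_r$, and for $y \neq y^{(m)}_r$ the morphism $\alpha^{(m,r)}_y\colon z\to y$ cannot be composed with $\psi^{(m,r)}_s\colon y^{(m)}_r\to x_s$ inside $\mathcal C$; the product $F(\alpha^{(m,r)}_y)\cdot F(\psi^{(m,r)}_s)$ is then not $F$ of anything, and your system of relations in $\mathcal C$ never forms. The paper uses the other half of condition (c): fix the target $k^{(m)}_r:=y^{(m)}_r$ and sum over sources $z \in F^{-1}Z$, writing $h^{(m)}_r - \sum_z F(\eta^{(m)}_{z,r}) \in \mathrm{rad}^{\ell-m}$ with $\eta^{(m)}_{z,r} \in \mathcal C(z, k^{(m)}_r)$ and $\eta^{(m)}_{z,r}=0$ whenever it lands in $\mathcal R^{\ell-m}\mathcal C$. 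Every composition $\eta^{(m)}_{z,r}\iota^{(m)}_{r,s}$ then lives in $\mathcal C$.

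Second, and more importantly: once the length property upgrades $\sum_{m,r} \eta^{(m)}_{z,r}\iota^{(m)}_{r,s} \in \mathcal R^{\ell}\mathcal C(z,x_s)$ to an \emph{exact} vanishing (for all $z,s$), you still need to conclude that each $\eta^{(m)}_{z,r}$ is zero. Your plan invokes conditions (e) and (f), but those concern only irreducible morphisms and say nothing about the $\iota^{(m)}_{r,s}\in\mathcal R^m\mathcal C$ for general $m$; they do not decouple the sum. The missing ingredient is that $i$ is a \emph{monomorphism}: applying $F$ to the relations and reassembling over $s$ gives
\[
\bigl[\textstyle\sum_z F(\eta^{(m)}_{z,r})\,;\ m,r\bigr]\cdot i = 0,
\]
whence $\sum_z F(\eta^{(m)}_{z,r}) = 0$ for every $(m,r)$ by injectivity of $i$. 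Only after this does condition (d) yield $\eta^{(m)}_{z,r}=0$, hence $h^{(m)}_r\in\mathrm{rad}^{\ell-m}$. Without this step the argument does not close.

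The induction on $\ell$ is harmless but unnecessary: the paper performs the lift modulo $\mathrm{rad}^{\ell-m}$ in one stroke and runs the argument once.
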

\begin{proof}
  Obviously, $(ii)\implies (i)$. Assume $(i)$. There is no loss of
  generality in assuming that $K^{(\infty)}=0$. Since $i$ is
  homogeneous, there exists $k^{(m)}_r\in \mathcal C$ such that
  $Fk^{(m)}_r=K^{(m)}_r$, for every $(m,r)$, and there exists, for
  every $m,r,s$, a morphism $\iota^{(m)}_{r,s}\in \mathcal R^m\mathcal
  C(k^{(m)}_r,x_s)$
that is zero when it lies in $\mathrm{rad}^{m+1}$ and such that
$i=[F(\iota^{(m)}_{r,s}) \ ;\ m,r,s]$.
For every $m,r$, denote by
  $h_r^{(m)}\colon Z\to K_r^{(m)}$ the corresponding component of
  $h$; There exists $(\eta_{z,r}^{(m)})_{z\in
    F^{-1}Z}\in \oplus_z\mathcal C(z,k_r^{(m)})$ such that
  \begin{itemize}
  \item $h_r^{(m)}-\sum_zF(\eta^{(m)}_{z,r})\in \mathrm{rad}^{\ell-m}$,
  \item for every $z$, the morphism $\eta^{(m)}_{z,r}$ is $0$ if it
    lies in $\mathcal R^{\ell-m}\mathcal C$.
  \end{itemize}
  Then $(h_r^{(m)}-\sum_zF(\eta^{(m)}_{z,r}))F(\iota^{(m)}_{r,s})$
  lies in $\mathrm{rad}^{\ell}$ and equals
  $h_r^{(m)}F(\iota^{(m)}_{r,s})-
  \sum_zF(\eta^{(m)}_{z,r}\iota^{(m)}_{r,s})$.
  Since $hi$ lies $\mathrm{rad}^\ell$, then so does
  $\sum_{r,m}h_r^{(m)}F(\iota^{(m)}_{r,s})$ for every $s$. Accordingly,
  $\sum_zF\left(
    \sum_{r,m}\eta_{z,r}^{(m)}\iota^{(m)}_{r,s}\right)$ lies in $\mathrm{rad}^\ell$ for every $s$, and hence
  $\sum_{r,m}\eta_{z,r}^{(m)}\iota^{(m)}_{r,s}$ lies in $\mathcal
  R^\ell\mathcal C(z,x_s)$ for every $z,s$.

  Now, for every $r,m,z,s$, if both $\eta^{(m)}_{z,r}$ and
  $\iota^{(m)}_{r,s}$ are nonzero, then there exists a path of
  irreducible morphisms of length at most $\ell-1$ from $z$ to $x_s$
  in $\mathcal C$, and hence $\mathcal R^\ell\mathcal
  C(z,x_s)=0$. Thus
  $\sum_{r,m}\eta^{(m)}_{z,r}\iota^{(m)}_{r,s}=0$ for every $z,s$.
  Applying $F$ to this equality yields that the following composite
  morphism is zero
  \[
  Z\xrightarrow{
    [ \sum_z F(\eta^{(m)}_{z,r})\ ;\ r,m]
  }
  \oplus_{r,m}K^{(m)}_r \xrightarrow{i} X\,.
  \]
  Given that $i$ is a monomorphism, it follows that $\sum_z
  F(\eta^{(m)}_{z,r})=0$ for every $m,r$, and hence
  $\eta^{(m)}_{z,r}=0$ for every $m,r,z$. Accordingly, $h^{(m)}_r\in
  \mathrm{rad}^{\ell-m}$ for every $m,r$.
\end{proof}

\section{Investigation of the left degree}
\label{sec:degr-homog-morph}

This section investigates the left degree of a morphism $f\colon X\to
Y$ under certain conditions expressed in terms of functors with the
covering property. The purpose of the section is to prove
Proposition~\ref{sec:kern-char-homog-1}: Assuming that $f$ lies in
$\mathrm{rad}^d\backslash\mathrm{rad}^{d+1}$ and is homogeneous up to
$\mathrm{rad}^d$ for some $d\in \mathbb N$ and that $f$ has
finite left degree denoted by $n$, there exists a direct sum
decomposition $\mathrm{Ker}(f') = K^{(\infty)} \oplus\left(
  \oplus_{m\geqslant n}\oplus_rK^{(m)}_r\right)$ of the kernel of the
homogeneous part $f'$ of $f$ such as in the setting of
Lemma~\ref{sec:funct-with-cover-1} and such that for every $Z\in \mathrm{ind}\,A$ and every $\ell\geqslant m$, the
following sequence is exact
\[
0\to \bigoplus\limits_{n\leqslant m \leqslant \ell} \frac{\mathrm{rad}^{\ell-m}}{\mathrm{rad}^{\ell-m+1}}(Z,K^{(m)})
\to
\frac{\mathrm{rad}^{\ell}}{\mathrm{rad}^{\ell+1}}(Z,X)
\to
\frac{\mathrm{rad}^{\ell+d}}{\mathrm{rad}^{\ell+d+1}}(Z,Y)\,.
\]

The general setting in which this result is valid is presented in
\ref{sec:setting-study} and is assumed throughout the section. Next,
a key lemma is proved in \ref{sec:homog-morph-2}. Then
\ref{sec:gener-kern-char} is devoted to the construction of
the above mentioned direct sum decomposition of $\mathrm{Ker}(f')$. Finally, \ref{sec:kern-char-homog} is devoted to
Proposition~\ref{sec:kern-char-homog-1}.

\subsection{Setting of the study}
\label{sec:setting-study}

Given $f \colon  X \to Y$ lying in $\mathrm{rad}^d\backslash\mathrm{rad}^{d+1}$ for
some integer $d\geqslant 1$, recall that $d_\ell(f)=n$
if $n$ is the least integer such that there exists $Z \in \mathrm{ind}\,A$  and $g \in \mathrm{Hom}_A (Z,X)$ verifying
$g \in \mathrm{rad}^n\backslash\mathrm{rad}^{n+1}$ and $gf \in \mathrm{rad}^{n+d+1}$.

Let $\Gamma$ be an Auslander-Reiten component of $A$.  Let $F\colon
\mathcal C\to \mathrm{ind}\,\Gamma$ be a
functor with the covering property for
$\Gamma$. Let $f\colon X\to Y$ be a morphism in $\mathrm{mod}\,A$
satisfying the following conditions
\begin{itemize}
\item every indecomposable direct summand of $X\oplus Y$ lies in
  $\Gamma$,
\item $f$ lies in $\mathrm{rad}^d$ and is homogeneous up to $\mathrm{rad}^{d+1}$ for some $d\in \mathbb N$,
\item the left degree of $f$ is finite and denoted by $n$.
\end{itemize}

Let $X=\oplus_s X_s$ and $Y=\oplus_t Y_t$ be direct sum decompositions
such that there exist tuples of objects $(x_s)_s$ and $(y_t)_t$ in
$\mathcal C$ and a tuple of morphisms $(\varphi_{s,t})_{s,t}\in
\oplus_{s,t}\mathcal R^d\mathcal C(x_s,y_t)$ verifying
\begin{itemize}
\item $X_s=Fx_s$ and $Y_t=Fy_t$ for every $s,t$,
\item for every $s,t$, the morphism $\varphi_{s,t}$ is zero if it lies
  in $\mathcal R^{d+1}\mathcal C$,
\item $f-[F(\varphi_{s,t})\ ;\ s,t]$ lies in $\mathrm{rad}^{d+1}$.
\end{itemize}
Recall that the morphism $[F(\varphi_{s,t})\ ;\ s,t]$ is called the
homogeneous part of $f$. In what follows it is denoted by $f'$.
Recall also that the homogeneity condition made on $f$ is valid in any of
the following cases
\begin{itemize}
\item $f$ is irreducible (see \ref{sec:defin-basic-feat}),
\item for every $s,t$, the component $X_s\to Y_t$ of $f$ is the sum of
  a morphism in $\mathrm{rad}^{d+1}$ and of
  the composition of a path of irreducible morphisms between
  indecomposables and with length $d$,
\item for every $s,t$, the vector space $\mathrm{Hom}_A(X_s,Y_t)/\mathrm{rad}^\infty(X_s,Y_t)$ has dimension at most $1$.
\end{itemize}

The objective of the section is to establish  some general properties
on the left degree of $f$. These are expressed in terms of $f'$.

\subsection{First interpretation of the finiteness of left degree}
\label{sec:homog-morph-2}

The following lemma investigates the shape of a morphism lying in $\mathrm{rad}^\ell\backslash\mathrm{rad}^{\ell+1}$ and having composition with $f$
lying in $\mathrm{rad}^{\ell+d+1}$.

\begin{lem}
  \label{sec:gener-kern-char-1}
  Let $Z\in \Gamma$, $g\in \mathrm{Hom}_A(Z,X)$ and
  $\ell\geqslant n$  be such that
  $g\in \mathrm{rad}^\ell\backslash\mathrm{rad}^{\ell+1}$ and $gf\in \mathrm{rad}^{\ell+d+1}$. For every $s$, let $(\gamma_{z,s})_{z\in
    F^{-1}Z}\in \oplus_z \mathcal R^{\ell}\mathcal C(z,x_s)$ be such
  that $g$ is the sum of $g':=[\sum_zF(\gamma_{z,s})\ ;\ s]\colon Z\to
  X$ and a morphism in $\mathrm{rad}^{\ell+1}$ and such that, for every
  $z,s$, the morphism $\gamma_{z,s}$ is zero when it lies in $\mathcal
  R^{\ell+1}\mathcal C$. Then
  \begin{enumerate}[(a)]
  \item $\sum_s\gamma_{z,s}\varphi_{s,t}=0$ in $\mathrm{Hom}_A(Z,Y_t)$
    for every $z,t$, and hence
  \item   $g'f'=0$.
  \end{enumerate}
\end{lem}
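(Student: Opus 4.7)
The plan is to reduce the vanishing of $gf$ modulo $\mathrm{rad}^{\ell+d+1}$ to the vanishing of the homogeneous piece $g'f'$, then use the covering property to split the resulting identity componentwise inside $\mathcal{C}$, and finally use the category-with-length structure to kill the pieces.

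First, I would show that $g'f' \in \mathrm{rad}^{\ell+d+1}$. Writing
\[
gf = g'f' + g'(f-f') + (g-g')f' + (g-g')(f-f'),
\]
the last three terms lie in $\mathrm{rad}^{\ell+d+1}$ since $g' \in \mathrm{rad}^{\ell}$, $f' \in \mathrm{rad}^d$, $g-g' \in \mathrm{rad}^{\ell+1}$, and $f-f' \in \mathrm{rad}^{d+1}$. Combined with the hypothesis $gf \in \mathrm{rad}^{\ell+d+1}$, this forces $g'f' \in \mathrm{rad}^{\ell+d+1}$.

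Next, I would compute components. The $Z \to Y_t$ component of $g'f'$ is
\[
\sum_s \Bigl(\sum_z F(\gamma_{z,s})\Bigr) F(\varphi_{s,t}) = \sum_z F\Bigl(\sum_s \gamma_{z,s}\varphi_{s,t}\Bigr),
\]
where each inner sum lies in $\mathcal{R}^{\ell+d}\mathcal{C}(z,y_t)$. The covering property (condition (c), second bijection) provides
\[
\bigoplus_{z\in F^{-1}Z} \mathcal{R}^{\ell+d}\mathcal{C}(z,y_t)/\mathcal{R}^{\ell+d+1}\mathcal{C}(z,y_t) \xrightarrow{\ \sim\ } \mathrm{rad}^{\ell+d}(Z,Y_t)/\mathrm{rad}^{\ell+d+1}(Z,Y_t),
\]
and the tuple $(\sum_s \gamma_{z,s}\varphi_{s,t})_z$ is sent to the class of that component. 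Since $g'f' \in \mathrm{rad}^{\ell+d+1}$, injectivity of the above map forces $\sum_s \gamma_{z,s}\varphi_{s,t} \in \mathcal{R}^{\ell+d+1}\mathcal{C}(z,y_t)$ for every $z,t$.

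Finally, I would invoke the category-with-length structure to eliminate this residue. If no summand of $\sum_s \gamma_{z,s}\varphi_{s,t}$ is nonzero in $\mathcal{C}$, we are done. Otherwise, there is some $s$ with both $\gamma_{z,s}$ and $\varphi_{s,t}$ nonzero in $\mathcal{C}$. The normalisation conventions on $\gamma_{z,s}$ (zero if in $\mathcal{R}^{\ell+1}\mathcal{C}$) and $\varphi_{s,t}$ (zero if in $\mathcal{R}^{d+1}\mathcal{C}$), together with the defining property that every morphism in $\mathcal{C}$ is a sum of compositions of paths of irreducible morphisms all of the same length, guarantee the existence of a path of length exactly $\ell$ from $z$ to $x_s$ and one of length exactly $d$ from $x_s$ to $y_t$; their concatenation gives a path of length $\ell+d$ from $z$ to $y_t$. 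Proposition~\ref{prop:with_length} then yields $\mathcal{R}^{\ell+d+1}\mathcal{C}(z,y_t)=0$, so $\sum_s \gamma_{z,s}\varphi_{s,t}=0$, which is (a). Statement (b) follows at once, since every component of $g'f'$ is the image under $F$ of such a vanishing sum.

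The main obstacle, and the only non-formal step, is the last one: justifying from the normal-form hypothesis on $\gamma_{z,s}$ and $\varphi_{s,t}$ the existence of an honest path of the prescribed length of irreducible morphisms in $\mathcal{C}$, so as to apply the length-based vanishing of Proposition~\ref{prop:with_length}. Everything else is a matter of distributing sums and invoking the covering property bijection.
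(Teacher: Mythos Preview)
Your proof is correct and follows essentially the same route as the paper: reduce to $g'f'\in\mathrm{rad}^{\ell+d+1}$, use the covering property to deduce $\sum_s\gamma_{z,s}\varphi_{s,t}\in\mathcal R^{\ell+d+1}\mathcal C$, and then kill this via Proposition~\ref{prop:with_length}. The step you flag as the only non-formal one is exactly what the paper does too, and your justification is sound: since $\gamma_{z,s}\in\mathcal R^\ell\mathcal C\setminus\mathcal R^{\ell+1}\mathcal C$ when nonzero, the common path length from $z$ to $x_s$ must be exactly $\ell$ (and similarly $d$ for $\varphi_{s,t}$), so a path of length $\ell+d$ from $z$ to $y_t$ exists and $\mathcal R^{\ell+d+1}\mathcal C(z,y_t)=0$.
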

\begin{proof}
It follows from the hypotheses that
  $g' f' - gf \in \mathrm{rad}^{\ell+d+1}$. Since,
  moreover,
  $ \left[\sum_zF(\gamma_{z,s})\ ;\
    s\right] f' = \left[\sum_zF(\sum_s\gamma_{z,s}\varphi_{s,t})\ ;\
    t\right]$,
  then $\sum_s\gamma_{z,s}\varphi_{s,t}\in \mathcal R^{\ell+d+1}\mathcal C$
  for every $z,t$. Given $s,t,z$, the morphism
  $\gamma_{z,s}\varphi_{s,t}$ is a linear combination of
  compositions of paths of irreducible morphisms of length $\ell+d$ in
  $\mathcal C$; Thus
  $\sum_s\gamma_{z,s}\varphi_{s,t}=0$ for every $z,t$ (see
  Proposition~\ref{prop:with_length}). This
  proves (a). And (b) is a direct consequence of (a).
\end{proof}

\subsection{Decomposition of the kernel}
\label{sec:gener-kern-char}

This subsection collects technical properties on the inclusion
morphism $i\colon \mathrm{Ker}(f')\to X$. Its aim is to show the
existence of a direct sum decomposition of $\mathrm{Ker}(f')$ fitting the
setting of Lemma~\ref{sec:funct-with-cover-1}.  First, in
\ref{sec:decomp-kern-rm}, it is proved that
$i\in \mathrm{rad}^n\backslash\mathrm{rad}^{n+1}$. From this property is
derived a first direct sum decomposition of $\mathrm{Ker}(f')$ that is
close to the desired one. After introducing useful notation on $i$ in
\ref{sec:notat-kern-morph}, the needed decomposition of
$\mathrm{Ker}(f')$ is then obtained in \ref{sec:reduct-incl-morph} by
taking the direct image of the one in \ref{sec:decomp-kern-rm} under a
suitable automorphism of $\mathrm{Ker}(f')$. Finally, it is proved in
\ref{sec:reduc-vers-kern} that $i$ is homogeneous with respect to the
resulting decomposition.

\subsubsection{A first decomposition of the kernel $\mathrm{Ker}(f')$}
\label{sec:decomp-kern-rm}

 The following lemma gives a first property on the inclusion
morphism $i\colon \mathrm{Ker}(f')\to X$.

\begin{lem}
  \label{sec:decomp-kern-rm-1}
  The inclusion morphism $\mathrm{Ker}(f')\to X$ lies in $\mathrm{rad}^n\backslash\mathrm{rad}^{n+1}$.
\end{lem}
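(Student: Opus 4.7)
The plan is to prove the two inclusions $i\in \mathrm{rad}^n$ and $i\not\in \mathrm{rad}^{n+1}$ separately, each by a contradiction argument exploiting the fact that $f = f' + (f-f')$ with $f-f'\in \mathrm{rad}^{d+1}$. In particular, for any $g$ one has $gf - gf' \in \mathrm{rad}^{d+1}\cdot (\text{stuff involving } g)$, so composing with $f$ and with $f'$ differ in a controlled way.

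For the first containment, suppose that some indecomposable direct summand $K_0$ of $\mathrm{Ker}(f')$ has the restriction $g\colon K_0\to X$ of $i$ lying in $\mathrm{rad}^m\setminus \mathrm{rad}^{m+1}$ for some $m<n$. Since $g$ factors through the kernel inclusion, $gf'=0$, so $gf = g(f-f')\in \mathrm{rad}^{m+d+1}$. By the very definition of the left degree, this forces $d_\ell(f)\leqslant m<n$, contradicting $d_\ell(f)=n$. Hence every component of $i$ lies in $\mathrm{rad}^n$, that is, $i\in \mathrm{rad}^n$.

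For the second containment, the definition of the left degree provides $Z\in \mathrm{ind}\,A$ and $g\colon Z\to X$ with $g\in \mathrm{rad}^n\setminus\mathrm{rad}^{n+1}$ and $gf\in \mathrm{rad}^{n+d+1}$. Since every indecomposable summand of $X$ lies in $\Gamma$ and every morphism between indecomposables in distinct Auslander--Reiten components lies in $\mathrm{rad}^\infty$, one necessarily has $Z\in \Gamma$; otherwise $g\in \mathrm{rad}^\infty\subseteq \mathrm{rad}^{n+1}$. This puts us in the situation of Lemma~\ref{sec:gener-kern-char-1} with $\ell=n$, which produces $g'\colon Z\to X$ of the form $[\sum_z F(\gamma_{z,s})\ ;\ s]$ such that $g-g'\in \mathrm{rad}^{n+1}$ and, crucially, $g'f'=0$. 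Then $g'\in \mathrm{rad}^n$ (from the presentation), and $g'\not\in \mathrm{rad}^{n+1}$ since otherwise $g=g'+(g-g')\in \mathrm{rad}^{n+1}$. From $g'f'=0$ one obtains a factorisation $g' = hi$ for some $h\colon Z\to \mathrm{Ker}(f')$; if $i$ were in $\mathrm{rad}^{n+1}$ then, since $\mathrm{rad}$ is an ideal, one would get $g'=hi\in \mathrm{rad}^{n+1}$, a contradiction. Hence $i\not\in \mathrm{rad}^{n+1}$.

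The delicate point, and thus the main obstacle, is the second containment: one needs a morphism that annihilates $f'$ exactly (not just $f$ modulo $\mathrm{rad}^{n+d+1}$) so that it factors through the genuine kernel $\mathrm{Ker}(f')$. The replacement of $g$ by $g'$ furnished by Lemma~\ref{sec:gener-kern-char-1} accomplishes precisely this, by adjusting $g$ within its residue class modulo $\mathrm{rad}^{n+1}$ using the combinatorics of paths of irreducible morphisms in the category with length $\mathcal C$, and here the homogeneity of $f$ up to $\mathrm{rad}^{d+1}$ is essential.
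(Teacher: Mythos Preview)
Your proof is correct and follows essentially the same approach as the paper's: both use Lemma~\ref{sec:gener-kern-char-1} with $\ell=n$ to produce $g'$ with $g'f'=0$ (hence factoring through $i$) to show $i\notin\mathrm{rad}^{n+1}$, and both deduce $i\in\mathrm{rad}^n$ from the fact that $if'=0$ together with $d_\ell(f')=d_\ell(f)=n$ (you unpack this via $gf=g(f-f')\in\mathrm{rad}^{m+d+1}$, which is exactly the reason $d_\ell$ is unchanged when passing from $f$ to $f'$). Your explicit justification that the witness $Z$ must lie in $\Gamma$ is a detail the paper leaves implicit.
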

\begin{proof}
  Since $f$ has finite left degree, there exists $Z,g$ fitting the
  hypotheses of Lemma~\ref{sec:gener-kern-char-1} with $\ell=n$. Using the notation
  introduced there, the morphism $[\sum_zF(\gamma_{z,s})\ ;\ s]\colon
  Z\to X$ lies in $\mathrm{rad}^n\backslash\mathrm{rad}^{n+1}$ and factors through $i$. Therefore
  $i\not\in \mathrm{rad}^{n+1}$. Moreover, $i\in \mathrm{rad}^n$ because
  $if'=0$ and $d_{\ell}(f')=n$.
\end{proof}

Hence, there exists a direct sum decompositions
\begin{equation}
  \label{eq:1}
  \left\{
    \begin{array}{l}
      \mathrm{Ker}(f') = K^{(\infty)} \oplus \left( \oplus_{n\leqslant m}
      K^{(m)} \right) \\
      K^{(m)}=\oplus_r K^{(m)}_r\ \text{for every $m$}
      \end{array}\right.
\end{equation}
such that
\begin{enumerate}[(a)]
\item each $K^{(m)}_r$ is indecomposable,
\item the inclusion morphism $K^{(\infty)}\to X$ lies in $\mathrm{rad}^\infty$,
\item for every $m,r$, the inclusion morphism $K^{(m)}_r\to X$ lies in
  $\mathrm{rad}^m\backslash\mathrm{rad}^{m+1}$.
\end{enumerate}
For every such set of decompositions (\ref{eq:1}) satisfying (a), (b)
and (c), denote by $\alpha_K$ the sequence indexed by the integers not
smaller than $n$ and with value at any $m\geqslant n$ equal to the
number of terms in the direct sum decomposition $K^{(m)}=\oplus_r
K^{(m)}_r$. In what follows, it is assumed that
\begin{enumerate}[(a)]
  \setcounter{enumi}{3}
\item the decomposition (\ref{eq:1}) satisfying (a), (b) and (c) is
  such that the sequence $\alpha_K$ is minimal for the lexicographic order.
\end{enumerate}

In the sequel,
the inclusion morphisms $K^{(m)}_r\to X$ and $K^{(m)}\to X$ are denoted
by $i^{(m)}_r$ and $i^{(m)}$, respectively.

\subsubsection{Notation for the inclusion morphism $\mathrm{Ker}(f')\to
  X$}
\label{sec:notat-kern-morph}

Recall that  $i$ denotes the inclusion morphism $\mathrm{Ker}(f')\to X$.
In order to prove the main result of this section, it is useful to
show the existence of an automorphism $\sigma$ of $\mathrm{Ker}(f')$ such
that each restriction $K^{(m)}\to X$ of $\sigma i$ is homogeneous with
respect to $(x_s)_s$. For that purpose, some notation is introduced below.
For every $m,r,s$, there exists
$(\iota^{(m)}_{z,r,s})_{z\in F^{-1}K^{(m)}_r}\in \oplus_z\mathcal
R^m\mathcal C(z,x_s)$ such that
\begin{itemize}
\item $i_r^{(m)}-[\sum_z F(\iota^{(m)}_{z,r,s})\ ;\ s]$ lies in
  $\mathrm{rad}^{m+1}$ (recall that $X=\oplus_sX_s$),
\item $\iota^{(m)}_{z,r,s}$ is zero whenever it lies in $\mathcal
  R^{m+1}\mathcal C$.
\end{itemize}

The following lemma collects useful factorisation properties of these morphisms.
\begin{lem}
  \label{sec:notat-kern-morph-2}
  The families of morphisms $\{i_r^{(m)}\}_{r,m}$ and
  $\{\iota^{(m)}_{z,r,s}\}_{m,z,r,s}$ have the following properties.
  \begin{enumerate}
  \item For every $z,m,r$ (with $Fz=K^{(m)}_r$), there exists
    $\sigma^{(m)}_{z,r}\colon K^{(m)}_r\to \mathrm{Ker}(f')$ such that
    $[F(\iota^{(m)}_{z,r,s})\ ;\ s] = \sigma_{z,r}^{(m)}i$.
  \item For every $(m,r)$, there exists $\tau^{(m)}_r\colon
    K^{(m)}_r\to \mathrm{Ker}(f')$ such that
    $i^{(m)}_r - [\sum_zF(\iota^{(m)}_{z,r,s})\ ;\ s] = \tau^{(m)}_r
    i$.
  \item Moreover, $\tau^{(m)}_r+\sum_z \sigma^{(m)}_{z,r}$ is the
    inclusion morphism $K^{(m)}_r\to \mathrm{Ker}(f')$.
  \end{enumerate}
\end{lem}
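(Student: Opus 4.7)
The plan is to derive all three statements from a single application of Lemma~\ref{sec:gener-kern-char-1}, combined with the universal property of the kernel morphism $i\colon \mathrm{Ker}(f') \to X$.

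First I would apply Lemma~\ref{sec:gener-kern-char-1} to each inclusion $g := i^{(m)}_r\colon K^{(m)}_r \to X$ with $\ell := m \geqslant n$ and $\gamma_{z,s} := \iota^{(m)}_{z,r,s}$. The hypotheses of that lemma are met: $i^{(m)}_r$ lies in $\mathrm{rad}^m\backslash\mathrm{rad}^{m+1}$ by construction of the decomposition~(\ref{eq:1}), and writing $f = f' + (f-f')$ with $f-f'\in \mathrm{rad}^{d+1}$ gives $i^{(m)}_r f = i^{(m)}_r (f-f') \in \mathrm{rad}^{m+d+1}$, because $i^{(m)}_r f' = 0$. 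The conclusion of part~(a) of that lemma then yields the key identity $\sum_s \iota^{(m)}_{z,r,s} \varphi_{s,t} = 0$ in $\mathcal C(z, y_t)$ for every $z \in F^{-1}K^{(m)}_r$ and every $t$.

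For (1), this identity reads precisely as $[F(\iota^{(m)}_{z,r,s})\ ;\ s]\, f' = 0$ for each fixed triple $(z,m,r)$, because $[F(\iota^{(m)}_{z,r,s})\ ;\ s]f' = [F(\sum_s \iota^{(m)}_{z,r,s}\varphi_{s,t})\ ;\ t]$. By the universal property of the kernel, this morphism factors uniquely through $i$, which produces the desired $\sigma^{(m)}_{z,r}$. For (2), one already has $i^{(m)}_r f' = 0$ by construction, and $[\sum_z F(\iota^{(m)}_{z,r,s})\ ;\ s]\, f' = 0$ follows from~(1) by linearity; their difference therefore has zero composition with $f'$ and factors uniquely through $i$, giving $\tau^{(m)}_r$. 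For (3), combining the defining equalities in~(1) and~(2) yields $(\tau^{(m)}_r + \sum_z \sigma^{(m)}_{z,r})\, i = i^{(m)}_r$. Since $i^{(m)}_r$ is by construction the composition of the canonical direct-summand inclusion $K^{(m)}_r \to \mathrm{Ker}(f')$ with $i$, and since $i$ is a monomorphism, right-cancellation gives the claim.

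The only subtle point, and the one I would take most care over, is the verification that $i^{(m)}_r f$ actually lies in $\mathrm{rad}^{m+d+1}$: this is what allows Lemma~\ref{sec:gener-kern-char-1} to apply, and it crucially relies on $f'$ being the kernel-annihilating homogeneous approximation of $f$ up to $\mathrm{rad}^{d+1}$. Once this hypothesis is in place, the three statements are formal consequences of the kernel's universal property.
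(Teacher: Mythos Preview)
Your proof is correct and follows essentially the same route as the paper's. The only cosmetic difference is that you invoke Lemma~\ref{sec:gener-kern-char-1} explicitly (after verifying $i^{(m)}_r f\in\mathrm{rad}^{m+d+1}$ via $i^{(m)}_r f'=0$ and $f-f'\in\mathrm{rad}^{d+1}$), whereas the paper inlines the same length computation directly from $i^{(m)}_r f'=0$; parts~(2) and~(3) are then identical in both treatments.
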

\begin{proof}
  (1) Fix $m$ and $r$. Since $i^{(m)}_rf'=0$ and
  $f'=[F(\varphi_{s,t})\ ;\ s,t]$, the morphism
  $[\sum_zF(\sum_s\iota^{(m)}_{z,r,s}\varphi_{s,t})\ ;\ t]$ from
  $K_r^{(m)}$ to $Y$ lies in $\mathrm{rad}^{m+d+1}$. In view of
  Propositions~\ref{prop:with_length} and
  \ref{subsec_wellbehavedexistence}, it follows that
  $\sum_s\iota^{(m)}_{z,r,s}\varphi_{s,t}$ lies in
  $\mathcal R^{m+d+1}\mathcal C$ for every $z,s,t$, and hence is zero
  (note that $\iota^{(m)}_{z,r,s}\varphi_{s,t}$ is a linear
  combination of compositions of paths of irreducible morphisms with
  length $m+d$). Thus $[F(\iota^{(m)}_{z,r,s})\ ;\ s]f'$ is zero, and
  therefore $[F(\iota^{(m)}_{z,r,s})\ ;\ s]$ factors through $i$ for
  every $z$.

  \medskip

  (2) Note that $\left(
    i^{(m)}_r - [\sum_zF(\iota^{(m)}_{z,r,s})\ ;\ s]
  \right) f'$ is zero because so are $i^{(m)}_rf'$  and
  $[F(\iota^{(m)}_{z,r,s})\ ;\ s]f'$ (for every $z$). Hence, there
  exists $\tau^{(m)}_r\colon K^{(m)}_r\to \mathrm{Ker}(f')$ such that
  $i^{(m)}_r - [\sum_zF(\iota^{(m)}_{z,r,s})\ ;\ s] = \tau^{(m)}_r
  i$.

  \medskip

  (3) According to (1) and (2), the composition $(\tau_r^{(m)} +
  \sum_z \sigma^{(m)}_{z,r}) i$ equals $i^{(m)}_r$. Whence the
  conclusion.
\end{proof}

Note that $\tau^{(m)}_r i = \left(\sum_z\sigma_{z,r}^{(m)}\right) i -i^{(m)}_r\in
\mathrm{rad}^{m+1}$ for every $m$.

\subsubsection{The direct sum decomposition of $\mathrm{Ker}(f')$}
\label{sec:reduct-incl-morph}

The following lemma shows the existence of an automorphism $\sigma$ of
$\mathrm{Ker}(f')$ such that each restriction $K^{(m)}_r\to X$ of $\sigma
i$ is homogeneous with respect to $(x_s)_s$.
\begin{lem}
  \label{sec:reduct-incl-morph-1}
  Let $i\colon \mathrm{Ker}(f')\to X$ be the inclusion morphism. There
  exists an automorphism $\sigma$ of $\mathrm{Ker}(f')$
  and, for every $(m,r)$, there exist
  \begin{itemize}
  \item $k_r^{(m)}\in F^{-1}K^{(m)}_r$,
  \item $\iota^{(m)}_{r,s}\in \mathcal R^m\mathcal C(k^{(m)}_r,x_s)$
    (for every $s$) not lying in $\mathcal R^{m+1}\mathcal C\backslash\{0\}$,
  \end{itemize}
  such that the restriction of $\sigma i\colon \mathrm{Ker}(f')\to X$ to
  $K^{(m)}_r$ equals $[F(\iota^{(m)}_{r,s})\ ;\ s]$.  In particular,
  each restriction $K^{(m)}_r\to X$ of $\sigma i$ is homogeneous with
  respect to $(x_s)_s$.
\end{lem}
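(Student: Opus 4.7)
The plan is to apply Lemma~\ref{sec:making-section-from-2} to build the automorphism $\sigma$, and then use the lex-minimality of $\alpha_K$ to rule out the ``wrong'' candidates for $\sigma j^{(m)}_r$.

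First I would decompose $K^{(\infty)}=\oplus_u K^{(\infty)}_u$ into indecomposable summands and denote by $j^{(m)}_r\colon K^{(m)}_r\to\mathrm{Ker}(f')$ and $j^{(\infty)}_u\colon K^{(\infty)}_u\to\mathrm{Ker}(f')$ the canonical inclusions. By Lemma~\ref{sec:notat-kern-morph-2}(3), $j^{(m)}_r=\tau^{(m)}_r+\sum_{z\in F^{-1}K^{(m)}_r}\sigma^{(m)}_{z,r}$ for every $(m,r)$. Assigning to each $K^{(m)}_r$ the family $\{\tau^{(m)}_r\}\cup\{\sigma^{(m)}_{z,r}\}_z$ and to each $K^{(\infty)}_u$ the singleton $\{j^{(\infty)}_u\}$, the sum assembled over all summands is $\mathrm{Id}_{\mathrm{Ker}(f')}$, which is an automorphism. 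Hence Lemma~\ref{sec:making-section-from-2} provides an automorphism $\sigma$ of $\mathrm{Ker}(f')$ such that $\sigma j^{(\infty)}_u=j^{(\infty)}_u$ and, for every $(m,r)$, $\sigma j^{(m)}_r$ equals either $\tau^{(m)}_r$ or $\sigma^{(m)}_{z,r}$ for some $z\in F^{-1}K^{(m)}_r$.

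The hard part is to rule out the $\tau$-choices. Suppose, for contradiction, that $\sigma j^{(m_0)}_{r_0}=\tau^{(m_0)}_{r_0}$ for some $(m_0,r_0)$. Since $\sigma$ is an automorphism, the maps $\sigma j^{(m)}_r$ and $j^{(\infty)}_u$ are the canonical inclusions of a new direct sum decomposition of $\mathrm{Ker}(f')$ into the same indecomposable summands. Composing with $i$ yields new inclusions into $X$: equal to $\sigma^{(m)}_{z,r}i=[F(\iota^{(m)}_{z,r,s})\ ;\ s]\in\mathrm{rad}^m$ in the $\sigma$-case, to $\tau^{(m)}_r i\in\mathrm{rad}^{m+1}$ in the $\tau$-case, and unchanged (in $\mathrm{rad}^\infty$) on each $K^{(\infty)}_u$. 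Relabelling every summand by the exact stratum of its inclusion into $X$ produces a new decomposition of the form (\ref{eq:1}) satisfying (a), (b) and (c). In this relabelling, each new summand ends up in a stratum at least as large as the old one, so partial sums satisfy $\sum_{m=n}^{M}\alpha_{new}(m)\leqslant \sum_{m=n}^{M}\alpha_K(m)$ for every $M\geqslant n$. Moreover, the $(m_0,r_0)$-summand ends up in a stratum strictly greater than $m_0$, so the inequality is strict at $M=m_0$. Hence $\alpha_{new}<\alpha_K$ in the lexicographic order, contradicting the minimality of $\alpha_K$.

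Consequently, for every $(m,r)$ there exists $z_r\in F^{-1}K^{(m)}_r$ with $\sigma j^{(m)}_r=\sigma^{(m)}_{z_r,r}$. Setting $k^{(m)}_r:=z_r$ and $\iota^{(m)}_{r,s}:=\iota^{(m)}_{z_r,r,s}$, the restriction of $\sigma i$ to $K^{(m)}_r$ equals $\sigma^{(m)}_{z_r,r}\,i=[F(\iota^{(m)}_{r,s})\ ;\ s]$; the required properties that $\iota^{(m)}_{r,s}\in\mathcal R^m\mathcal C(k^{(m)}_r,x_s)$ and that $\iota^{(m)}_{r,s}$ vanishes whenever it lies in $\mathcal R^{m+1}\mathcal C$ are inherited from the corresponding defining properties of the $\iota^{(m)}_{z,r,s}$'s of \ref{sec:notat-kern-morph}. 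The main obstacle is the stratum-accounting in the minimality step: one must carefully check that under transport by $\sigma$ no summand can migrate to a strictly smaller stratum, so that the forced migration at $(m_0,r_0)$ alone suffices to produce a lex-strict inequality.
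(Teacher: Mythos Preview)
Your proof is correct and follows the paper's approach closely: apply Lemma~\ref{sec:making-section-from-2} to the families $\{\tau^{(m)}_r\}\cup\{\sigma^{(m)}_{z,r}\}_z$ on the $K^{(m)}_r$'s (and the identity on $K^{(\infty)}$), then invoke the lex-minimality of $\alpha_K$ to exclude every $\tau$-choice. The only difference is that you spell out the stratum-accounting for the minimality step (showing no summand drops and at least one strictly rises), whereas the paper simply asserts that a $\tau$-choice would contradict condition~(d); your extra care is a reasonable way to fill in that terse passage.
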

\begin{proof}
  Apply Lemma~\ref{sec:making-section-from-2} to the following data
  \begin{itemize}
  \item take $M$ to be $\mathrm{Ker}(f')$,
  \item take $M=\oplus_i M_i$ to be a direct sum decomposition into
    indecomposables subordinated to the decomposition
    $\mathrm{Ker}(f')=K^{(\infty)}\oplus\left( \oplus_{m\geqslant
        n}\oplus_r K^{(m)}_r\right)$,
  \item for every $i$ such that $M_i$ is a summand of $K^{(\infty)}$,
    take $\{\lambda_{i,j}\}_j$ to be the family consisting of a single
    element, namely the inclusion morphism $M_i\to \mathrm{Ker}(f')$,
  \item for every $i$ such that $M_i=K^{(m)}_r$ for some $(m,r)$, take
    $\{\lambda_{i,j}\}_j$ to be the family consisting of the morphisms
    $\sigma^{(m)}_{z,r}$ (for $z\in F^{-1}K^{(m)}_r$) together with
    the morphism $\tau^{(m)}_r$ (see
    Lemma~\ref{sec:notat-kern-morph-2}).
  \end{itemize}
  According to Lemma~\ref{sec:making-section-from-2}, there exists an
  automorphism $\sigma\colon \mathrm{Ker}(f')\to \mathrm{Ker}(f')$ such that
  \begin{itemize}
  \item its restriction to $K^{(\infty)}$ is the inclusion morphism,
  \item for every $(m,r)$, either there exists $k^{(m)}_r\in F^{-1}K^{(m)}_r$
    such that the restriction of $\sigma$ to $K^{(m)}_r$ is
    $\sigma^{(m)}_{k^{(m)}_r,r}$,
    or else the restriction of $\sigma$ to $K^{(m)}_r$ is
    $\tau^{(m)}_r$; In the former case, the composite morphism
    $K^{(m)}_r \xrightarrow{\sigma_{|K^{(m)}_r}} \mathrm{Ker}(f')\xrightarrow i X$ equals
    $[F(\iota^{(m)}_{k^{(m)}_r,r,s})\ ;\ s]$ and lies in $\mathrm{rad}^m$. In the latter case, it equals $i^{(m)}_r-[\sum_{Fz =
      K^{(m)}_r}F(\iota^{(m)}_{z,r,s})\ ;\ s]$ and lies in $\mathrm{rad}^{m+1}$. In view of the minimality condition (d) assumed on
    the set of decompositions (\ref{eq:1}) (see
    \ref{sec:decomp-kern-rm}), it is necessary that the former case
    always occurs and never does the latter case.
  \end{itemize}
  Now, denote by $\sigma^{(m)}_r$ and  $\iota^{(m)}_{r,s}$ the
  morphisms $\sigma^{(m)}_{k^{(m)}_r,r}$ and
  $\iota^{(m)}_{k^{(m)}_r,r,s}$, respectively. Thus
  $\sigma^{(m)}_r i = [F(\iota^{(m)}_{r,s})\ ;\ s]$. Note that, since
  $\iota^{(m)}_{r,s}$ is zero whenever it lies in $\mathcal
  R^{m+1}\mathcal C$ and since $\sigma i$ is a monomorphism, it is
  necessary that $\iota^{(m)}_{r,s}\not\in \mathcal R^{m+1}\mathcal
  C$.  The conclusion of
  the lemma therefore follows from the previous considerations and
  from those in \ref{sec:notat-kern-morph}.
\end{proof}

\subsubsection{Homogeneity of the inclusion morphism $\mathrm{Ker}(f')\to
X$}
\label{sec:reduc-vers-kern}

Replacing the set of decompositions (\ref{eq:1}) by their respective
direct images under any automorphism $\sigma$ of $\mathrm{Ker}(f')$ such
as in Lemma~\ref{sec:reduct-incl-morph-1} yields the following result.

\begin{prop}
  \label{sec:reduc-vers-kern-1}
  There exist direct sum decompositions
  \[
  \left\{
    \begin{array}{l}
      \mathrm{Ker}(f') = K^{(\infty)} \oplus \left( \oplus_{n\leqslant m}
      K^{(m)} \right) \\
      K^{(m)}=\oplus_r K^{(m)}_r\ \text{for every $m$}
      \end{array}\right.
  \]
  such that
  \begin{enumerate}
  \item each $K^{(m)}_r$ is indecomposable,
  \item the inclusion morphism $K^{(\infty)}\to X$ lies in $\mathrm{rad}^\infty$,
  \item for every $m,r$, the inclusion morphism $K^{(m)}_r\to X$
    lies in $\mathrm{rad}^m\backslash\mathrm{rad}^{m+1}$ and is homogeneous
    with respect to $(x_s)_s$.
  \end{enumerate}
\end{prop}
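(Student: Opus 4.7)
The plan is to package Lemma~\ref{sec:reduct-incl-morph-1} into the form demanded by the proposition. Concretely, I would take the decomposition of $\mathrm{Ker}(f')$ given by~(\ref{eq:1}) in~\ref{sec:decomp-kern-rm}, chosen so as to satisfy the minimality condition~(d), and then replace each of its direct summands by its image under the automorphism $\sigma$ provided by Lemma~\ref{sec:reduct-incl-morph-1}.

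First, invoke Lemma~\ref{sec:decomp-kern-rm-1} to obtain a decomposition $\mathrm{Ker}(f')=K^{(\infty)}\oplus\bigoplus_{m\geqslant n}\bigoplus_r K^{(m)}_r$ satisfying properties~(a)--(c) of~\ref{sec:decomp-kern-rm}, and fix one that in addition satisfies the minimality condition~(d). Applying Lemma~\ref{sec:reduct-incl-morph-1} yields an automorphism $\sigma$ of $\mathrm{Ker}(f')$, together with objects $k^{(m)}_r\in\mathcal C$ with $Fk^{(m)}_r=K^{(m)}_r$ and morphisms $\iota^{(m)}_{r,s}\in\mathcal R^m\mathcal C(k^{(m)}_r,x_s)$, not lying in $\mathcal R^{m+1}\mathcal C\setminus\{0\}$, such that the restriction of $\sigma$ to $K^{(\infty)}$ is the inclusion morphism and the restriction of $\sigma i$ to each $K^{(m)}_r$ equals $[F(\iota^{(m)}_{r,s})\ ;\ s]$.

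Next, define the new decomposition by $\widetilde K^{(\infty)}:=K^{(\infty)}$ and $\widetilde K^{(m)}_r:=\sigma(K^{(m)}_r)$. Because $\sigma$ is an automorphism of $\mathrm{Ker}(f')$, this is again a direct sum decomposition, and each $\widetilde K^{(m)}_r$ is indecomposable via the isomorphism $\sigma|_{K^{(m)}_r}\colon K^{(m)}_r\to\widetilde K^{(m)}_r$; this gives~(1). Since $\sigma$ acts as the identity on $K^{(\infty)}$, one has $\widetilde K^{(\infty)}=K^{(\infty)}$ and the inclusion $\widetilde K^{(\infty)}\hookrightarrow X$ remains the original one, which lies in $\mathrm{rad}^\infty$ by property~(b) of~\ref{sec:decomp-kern-rm}; this gives~(2). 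For~(3), observe that the inclusion $\widetilde K^{(m)}_r\hookrightarrow X$, precomposed with the isomorphism $\sigma|_{K^{(m)}_r}$, equals the composite $K^{(m)}_r\xrightarrow{\sigma}\mathrm{Ker}(f')\xrightarrow{i}X$, which by Lemma~\ref{sec:reduct-incl-morph-1} is $[F(\iota^{(m)}_{r,s})\ ;\ s]$. Through the identification $\widetilde K^{(m)}_r\cong Fk^{(m)}_r$ furnished by $\sigma|_{K^{(m)}_r}$, this exhibits the inclusion $\widetilde K^{(m)}_r\hookrightarrow X$ as a homogeneous morphism with respect to $(x_s)_s$; that it lies in $\mathrm{rad}^m\setminus\mathrm{rad}^{m+1}$ follows from the covering property together with the fact that, since $\sigma i$ is a monomorphism, at least one $\iota^{(m)}_{r,s}$ is nonzero and hence avoids $\mathcal R^{m+1}\mathcal C$.

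There is no genuine obstacle here: the substance of the proposition is already contained in Lemma~\ref{sec:reduct-incl-morph-1}, and what remains is essentially bookkeeping. The only mildly delicate point is the identification of the submodule $\widetilde K^{(m)}_r\subseteq\mathrm{Ker}(f')$ with the object $Fk^{(m)}_r\in\Gamma$, which is what makes the phrase ``homogeneous with respect to $(x_s)_s$'' literally apply to the new inclusion.
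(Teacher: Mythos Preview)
Your proposal is correct and follows exactly the approach of the paper: take the decomposition~(\ref{eq:1}) satisfying the minimality condition~(d), push it forward along the automorphism $\sigma$ of Lemma~\ref{sec:reduct-incl-morph-1}, and read off properties (1)--(3) from the conclusion of that lemma. The paper's own proof is a one-line reference to this procedure; you have simply spelled out the bookkeeping, including the point that $\sigma$ restricts to the inclusion on $K^{(\infty)}$ (which comes from the proof, not the statement, of Lemma~\ref{sec:reduct-incl-morph-1}) and the identification $\widetilde K^{(m)}_r\cong Fk^{(m)}_r$ needed to make ``homogeneous'' literally apply.
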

\begin{proof}
  This follows readily from Lemma~\ref{sec:reduct-incl-morph-1} after
  taking into account the comment that precedes the statement of the proposition.
\end{proof}

The following consequence of the previous result is useful in the
proof of the main results of this text.

\begin{cor}
  \label{sec:reduc-vers-kern-2}
  Let $i\colon \mathrm{Ker}(f')\to X$ be the inclusion morphism.
  Let $(K^{(m)}_r)_{m,r}$ be such as in Proposition~\ref{sec:reduc-vers-kern-1}.
  For every $\ell\geqslant n$, $Z\in
  \Gamma$ and $h\in \mathrm{Hom}_A(Z,K)$ the following conditions are
  equivalent
  \begin{enumerate}[(i)]
  \item $hi\in \mathrm{rad}^\ell$,
  \item for every $m,r$, the component $Z\to K^{(m)}_r$ of $h$ lies in
    $\mathrm{rad}^{\ell-m}$.
  \end{enumerate}
\end{cor}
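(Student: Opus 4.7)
The plan is to derive the corollary as a direct application of Lemma~\ref{sec:funct-with-cover-1} to the decomposition furnished by Proposition~\ref{sec:reduc-vers-kern-1}, combined with an elementary observation relating components into a direct sum to components into its indecomposable summands.

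First, I would verify that the setting of Lemma~\ref{sec:funct-with-cover-1} is exactly what Proposition~\ref{sec:reduc-vers-kern-1} provides: the kernel decomposes as $\mathrm{Ker}(f')=K^{(\infty)}\oplus\left(\oplus_{m\geqslant n}\oplus_r K^{(m)}_r\right)$ with each $K^{(m)}_r$ indecomposable and in $\Gamma$, with $K^{(\infty)}\to X$ lying in $\mathrm{rad}^\infty$, each $K^{(m)}_r\to X$ lying in $\mathrm{rad}^m\backslash\mathrm{rad}^{m+1}$, and the restriction $\oplus_{m,r}K^{(m)}_r\to X$ of $i$ being homogeneous with respect to $(x_s)_s$. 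Note that the summands $K^{(m)}$ with $m<n$ required in the setting of Lemma~\ref{sec:funct-with-cover-1} are simply taken to be zero, which causes no issue.

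Second, I would apply Lemma~\ref{sec:funct-with-cover-1} directly to conclude that $(i)$ is equivalent to condition $(ii')$: for every $m$, the component $h^{(m)}\colon Z\to K^{(m)}$ of $h$ lies in $\mathrm{rad}^{\ell-m}$. It only remains to show that $(ii')$ and $(ii)$ are equivalent. Denoting by $\iota_r\colon K^{(m)}_r\to K^{(m)}$ and $\pi_r\colon K^{(m)}\to K^{(m)}_r$ the canonical inclusion and projection associated with the decomposition $K^{(m)}=\oplus_r K^{(m)}_r$, the component $h^{(m)}_r$ of $h$ into $K^{(m)}_r$ satisfies $h^{(m)}_r=\pi_r\circ h^{(m)}$ and $h^{(m)}=\sum_r \iota_r\circ h^{(m)}_r$. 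Since $\mathrm{rad}$ is a two-sided ideal stable under composition with arbitrary morphisms, the membership $h^{(m)}\in\mathrm{rad}^{\ell-m}$ is equivalent to the simultaneous membership of every $h^{(m)}_r$ in $\mathrm{rad}^{\ell-m}$. This yields the equivalence $(ii')\Leftrightarrow (ii)$ and completes the argument.

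There is essentially no obstacle: the work has already been done in proving Proposition~\ref{sec:reduc-vers-kern-1} and Lemma~\ref{sec:funct-with-cover-1}; the corollary is a bookkeeping refinement that rewrites the conclusion in terms of the indecomposable summands rather than their aggregates $K^{(m)}$. The only minor point requiring care is the implicit handling of the summand $K^{(\infty)}$: its component of $h$ is automatically irrelevant because $K^{(\infty)}\to X$ lies in $\mathrm{rad}^\infty\subseteq \mathrm{rad}^\ell$, so it imposes no condition on either side of the equivalence.
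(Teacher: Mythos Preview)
Your proposal is correct and follows essentially the same approach as the paper, which simply states that the corollary follows from Lemma~\ref{sec:funct-with-cover-1} and Proposition~\ref{sec:reduc-vers-kern-1}. You have merely made explicit the trivial passage from components into $K^{(m)}$ (as in the lemma's conclusion) to components into the indecomposable summands $K^{(m)}_r$, which the paper leaves implicit.
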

\begin{proof}
  This follows from Lemma~\ref{sec:funct-with-cover-1} and
  Proposition~\ref{sec:reduc-vers-kern-1}.
\end{proof}

\subsection{The kernel characterisation for homogeneous morphisms}
\label{sec:kern-char-homog}

It is now possible to prove the main result of this section. It
translates the finiteness of the left degree of $f$ in terms of exact
sequences. As a reminder, here is the setting under which this result
is valid: $\Gamma$ is an Auslander-Reiten component of $A$ and
$F\colon \mathcal C\to \mathrm{ind}\,\Gamma$ is a functor with the covering
property relatively to $\Gamma$; a morphism $f\colon X\to Y$ is given
such that every indecomposable direct summand of $X\oplus Y$ lies in
$\Gamma$; it is assumed that $f$ lies in $\mathrm{rad}^d\backslash\mathrm{rad}^{d+1}$ and is homogeneous up to $\mathrm{rad}^{d+1}$ for some
$d\in\mathbb N$, and that $f$ has finite left degree denoted by $n$;
finally, the homogeneous part of $f$ is denoted by $f'$, the inclusion
morphism $\mathrm{Ker}(f')\to X$ is denoted by $i$ and a direct sum
decomposition of $\mathrm{Ker}(f')$ is given such as in
Proposition~\ref{sec:reduc-vers-kern-1}. For every $m$, the inclusion
morphism $K^{(m)}\to X$ is denoted by $i^{(m)}$.
\begin{prop}
  \label{sec:kern-char-homog-1}
  For every $Z\in \mathrm{ind}\,A$ and every integer $\ell\geqslant n$,
  the following sequence is exact
    \begin{equation}
      \label{eq:4}
      0\to \bigoplus\limits_{n\leqslant m \leqslant \ell} \frac{\mathrm{rad}^{\ell-m}}{\mathrm{rad}^{\ell-m+1}}(Z,K^{(m)})
      \to
      \frac{\mathrm{rad}^{\ell}}{\mathrm{rad}^{\ell+1}}(Z,X)
      \to
      \frac{\mathrm{rad}^{\ell+d}}{\mathrm{rad}^{\ell+d+1}}(Z,Y)\,.
      \tag{$\mathcal E_{Z,\ell}$}
    \end{equation}
\end{prop}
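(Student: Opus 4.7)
The plan is to prove exactness of $(\mathcal{E}_{Z,\ell})$ in both positions, with the work concentrated on exactness at the middle. If $Z \in \mathrm{ind}\,A \setminus \Gamma$, all three terms vanish since no indecomposable summand of $X$, $Y$, or the $K^{(m)}_r$ lies in a component distinct from $\Gamma$ at the level of $\mathrm{rad}^\bullet/\mathrm{rad}^{\bullet+1}$; so I assume $Z \in \Gamma$. Two results do all the work: Corollary~\ref{sec:reduc-vers-kern-2}, which (thanks to the homogeneity of $i$ guaranteed by Proposition~\ref{sec:reduc-vers-kern-1}) detects the radical level of any morphism $h \colon Z \to \mathrm{Ker}(f')$ from the radical levels of its components on the $K^{(m)}$'s; and Lemma~\ref{sec:gener-kern-char-1}, which, modulo $\mathrm{rad}^{\ell+1}$, upgrades the hypothesis $g f \in \mathrm{rad}^{\ell+d+1}$ to the exact vanishing $g' f' = 0$.

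For injectivity of the left-hand map, take a tuple $(h_m)_{n \leqslant m \leqslant \ell}$ with $h_m \in \mathrm{rad}^{\ell-m}(Z, K^{(m)})$ and assume $\sum_m h_m \cdot i^{(m)} \in \mathrm{rad}^{\ell+1}(Z, X)$. Assembling into a single $h \colon Z \to \mathrm{Ker}(f')$ with zero $K^{(\infty)}$-component (and zero component on each $K^{(m)}$ with $m > \ell$), one has $h i \in \mathrm{rad}^{\ell+1}$, and Corollary~\ref{sec:reduc-vers-kern-2} applied with $\ell+1$ forces $h_m \in \mathrm{rad}^{\ell-m+1}(Z, K^{(m)})$ for every $m$. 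The composition vanishes because $i^{(m)} f' = 0$ by construction and $f - f' \in \mathrm{rad}^{d+1}$, giving
\[
h_m \cdot i^{(m)} \cdot f \;=\; h_m \cdot i^{(m)} \cdot (f - f') \;\in\; \mathrm{rad}^{\ell+d+1}(Z, Y).
\]

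The main obstacle, and the substance of the proof, is exactness in the middle. Let $g \in \mathrm{rad}^\ell(Z, X)$ with $g f \in \mathrm{rad}^{\ell+d+1}$. If $g \in \mathrm{rad}^{\ell+1}$ there is nothing to do; otherwise I apply Lemma~\ref{sec:gener-kern-char-1} with the present $\ell$ to produce $g' \colon Z \to X$ with $g - g' \in \mathrm{rad}^{\ell+1}$ and $g' f' = 0$, whence $g' = h i$ for some $h \colon Z \to \mathrm{Ker}(f')$. Decomposing $h = h^{(\infty)} + \sum_{m \geqslant n} h^{(m)}$ along $\mathrm{Ker}(f') = K^{(\infty)} \oplus \bigoplus_{m \geqslant n} K^{(m)}$, the $K^{(\infty)}$-contribution lies in $\mathrm{rad}^\infty \subseteq \mathrm{rad}^{\ell+1}$, and for $m > \ell$ one has $h^{(m)} \cdot i^{(m)} \in \mathrm{rad}^m \subseteq \mathrm{rad}^{\ell+1}$, so both may be absorbed into the error. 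This leaves $g \equiv \sum_{n \leqslant m \leqslant \ell} h^{(m)} \cdot i^{(m)} \pmod{\mathrm{rad}^{\ell+1}}$, and since the right-hand side is in $\mathrm{rad}^\ell$, Corollary~\ref{sec:reduc-vers-kern-2} yields $h^{(m)} \in \mathrm{rad}^{\ell-m}(Z, K^{(m)})$ for every $m$ in the range. The class of $g$ is therefore the image of $(h^{(m)})_{n \leqslant m \leqslant \ell}$. The hard point throughout is the precise correspondence between the radical level of $h i$ and those of the components of $h$, which relies essentially on the homogeneity of $i$ and is the reason why the structural work of Section~\ref{sec:gener-kern-char} had to be carried out first.
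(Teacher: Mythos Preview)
Your proof is correct and follows essentially the same approach as the paper: injectivity via Corollary~\ref{sec:reduc-vers-kern-2}, and middle exactness by using Lemma~\ref{sec:gener-kern-char-1} to replace $g$ by $g'$ with $g'f'=0$, then factoring through $i$ and invoking Corollary~\ref{sec:reduc-vers-kern-2} again to control the radical levels of the components. Your version is slightly more explicit in two places the paper leaves implicit (the case $Z\notin\Gamma$ and the verification that the composite of the two maps lands in $\mathrm{rad}^{\ell+d+1}$), and you absorb the $K^{(\infty)}$ and $m>\ell$ contributions into the error term rather than rewriting $i$ as $[i^{(m)};\,n\leqslant m\leqslant\ell]^T+a$ with $a\in\mathrm{rad}^{\ell+1}$ as the paper does, but these are cosmetic differences.
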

\begin{proof}
  Note that the arrows of the sequence are induced by the restrictions
  $i^{(m)}\colon K^{(m)}\to X$ of
  $i$ and by
  $f\colon X\to Y$. The exactness at $\oplus_{n\leqslant m\leqslant
    \ell} \frac{\mathrm{rad}^{\ell-m}}{\mathrm{rad}^{\ell-m+1}}(Z,K^{(m)})$
  follows from Corollary~\ref{sec:reduc-vers-kern-2}. To prove the
  exactness at $\frac{\mathrm{rad}^{\ell}}{\mathrm{rad}^{\ell+1}}(Z,X)$,
  consider $g\in \mathrm{rad}^\ell(Z,X)$ such that $gf\in \mathrm{rad}^{\ell+d+1}(Z,Y)$.

  Following
  Lemma~\ref{sec:gener-kern-char-1}, there exists $g'\in \mathrm{rad}^{\ell}(Z,X)$ such that $g-g'\in \mathrm{rad}^{\ell+1}$ and
  $g'f'=0$. Note that, following
  Proposition~\ref{sec:reduc-vers-kern-1}, the inclusion morphism
  $\mathrm{Ker}(f')\to X$ may be written as $[i^{(m)}\ ;\
  n\leqslant m\leqslant \ell]^T+a$ where  $a\in
  \mathrm{rad}^{\ell+1}$. Accordingly, there exists
  $(h^{(m)})_{n\leqslant m\leqslant \ell}\in \oplus_m\mathrm{Hom}_A
  (Z,K^{(m)})$ such that $g'-\sum_mh^{(m)}i^{(m)}\in \mathrm{rad}^{\ell+1}$.

  Now, Corollary~\ref{sec:reduc-vers-kern-2} entails that $h^{(m)}\in
  \mathrm{rad}^{\ell-m}$ because $g'\in \mathrm{rad}^\ell$. Denote by
  $\overline g$ and $\overline{h^{(m)}}$ the classes of
  $g$ and $h^{(m)}$ in $\frac{\mathrm{rad}^\ell}{\mathrm{rad}^{\ell+1}}(Z,X)$ and $\frac{\mathrm{rad}^{\ell-m}}{\mathrm{rad}^{\ell-m+1}}(Z,K^{(m)})$, respectively. Taking into account that
  $i^{(m)}\in \mathrm{rad}^m$ for every $m$, it follows from the previous
  considerations that $\overline g$ is the image of
  $(\overline{h^{(m)}})_{n\leqslant m\leqslant \ell}$, which finishes
  proving that (\ref{eq:4}) is exact.
\end{proof}

\section{Applications  to irreducible morphisms}
\label{sec:applications1}

This section applies the considerations of the previous section to  some
structure results on irreducible morphisms. In \ref{sec:general-case},
the main result of the previous section is applied to irreducible
morphisms $X\to Y$ where $X$ or $Y$ is indecomposable. And some
consequences are derived. Next, in \ref{subsec_degreealmostsplit}, the
kernels of irreducible morphisms $X\to Y$ are compared when their left
degree is finite. Finally, these results are applied in
\ref{subsec_finitetype} to characterise algebras of finite
representation type in terms of left degrees of irreducible morphisms.
The results of this section extend the results proved in \cite{CLT1}
to algebras over perfect fields and also strengthens them.

\subsection{Irreducible morphisms with finite (left) degree}
\label{sec:general-case}

Since the kernel of an irreducible morphism is indecomposable,
Proposition~\ref{sec:kern-char-homog-1} specialises to irreducible
morphisms as follows.

\begin{thm}
  \label{sec:finite-left-degree-1}
  Let $f\colon X\to Y$ be an irreducible morphism
  such that $X$ or $Y$ is indecomposable. Assume that $d_\ell(f)$ is
  finite and denoted by $n$. Then, there exists an irreducible
  morphism $f'\colon X\to Y$ such that
  \begin{enumerate}
  \item $f-f'\in \mathrm{rad}^2$,
  \item the inclusion morphism $i\colon \mathrm{Ker}(f') \to X$ lies in
    $\mathrm{rad}^n\backslash\mathrm{rad}^{n+1}$ and is the composition
    of $n$ irreducible morphisms between indecomposables,
  \item for every $\ell\geqslant n$, $Z\in \mathrm{ind}\,A$, the following sequence is exact
    \[
    0 \to
    \frac{\mathrm{rad}^{\ell-n}}{\mathrm{rad}^{\ell-n+1}}(Z,\mathrm{Ker}(f'))
    \xrightarrow{-\cdot i }
    \frac{\mathrm{rad}^\ell}{\mathrm{rad}^{\ell+1}}(Z,X)
    \xrightarrow{-\cdot f}
    \frac{\mathrm{rad}^{\ell+1}}{\mathrm{rad}^{\ell+2}}(Z,Y)\,.
    \]
\end{enumerate}
  If $f$ is freely irreducible, then $f'$ may be chosen
  equal to $f$.
\end{thm}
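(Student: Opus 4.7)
The strategy is to specialise Proposition~\ref{sec:kern-char-homog-1} to the present context and collapse the resulting kernel decomposition to a single indecomposable summand. Since $f$ is irreducible, $f\in\mathrm{rad}\setminus\mathrm{rad}^2$, so the proposition applies with $d=1$. Because $X$ (or $Y$) is indecomposable, $f$ is homogeneous up to $\mathrm{rad}^2$ with respect to any functor $F\colon\mathcal{C}\to\mathrm{ind}\,\Gamma$ with the covering property on the Auslander-Reiten component containing that indecomposable (see \ref{sec:homog-morph-1}). Let $f'$ denote the homogeneous part of $f$; by construction $f-f'\in\mathrm{rad}^2$, and since $f\notin\mathrm{rad}^2$ this forces $f'$ to be irreducible as well, yielding~(1).

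Applying Proposition~\ref{sec:kern-char-homog-1} with $d=1$ produces a direct sum decomposition $\mathrm{Ker}(f')=K^{(\infty)}\oplus\bigoplus_{m\geqslant n}K^{(m)}$ together with the exact sequence stated there. By Lemma~\ref{sec:decomp-kern-rm-1} the inclusion $i\colon\mathrm{Ker}(f')\to X$ lies in $\mathrm{rad}^n\setminus\mathrm{rad}^{n+1}$, so $\mathrm{Ker}(f')\neq 0$ and $f'$ is not a monomorphism. The standard factorisation-through-image argument shows that an irreducible morphism which is not a monomorphism is an epimorphism, hence $0\to\mathrm{Ker}(f')\to X\xrightarrow{f'}Y\to 0$ is a short exact sequence.

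The key remaining step is to prove that $\mathrm{Ker}(f')$ is indecomposable, for this collapses the decomposition above to $\mathrm{Ker}(f')=K^{(n)}$ and turns the exact sequence of Proposition~\ref{sec:kern-char-homog-1} into the one in~(3). When $X$ is indecomposable, Lemma~\ref{sec:appl-comp-irred-1}(2) applied to the short exact sequence produces a path $X_0\to X_1\to\cdots\to X_n=X$ of irreducible morphisms between indecomposables whose composition is a kernel morphism of $f'$; this simultaneously yields the indecomposability of $\mathrm{Ker}(f')$ (as $X_0\cong\mathrm{Ker}(f')$) and property~(2). The case $Y$ indecomposable is treated dually via $D\colon\mathrm{mod}\,A\to\mathrm{mod}\,A^{\mathrm{op}}$: the morphism $Df'$ is an irreducible monomorphism with indecomposable domain $DY$, and the dual of Lemma~\ref{sec:appl-comp-irred-1}(2) applied to its cokernel morphism yields indecomposability of $\mathrm{Coker}(Df')\cong D(\mathrm{Ker}(f'))$, and hence of $\mathrm{Ker}(f')$. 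Finally, for the freely irreducible case, Proposition~\ref{subsec_wellbehavedexistence}(2) provides a covering functor $F$ under which each component of $f$ lies in the image of the map $\mathcal{C}(x,x_i)\to\mathrm{Hom}_A(X,X_i)$; thus $f$ itself is homogeneous and we may take $f'=f$. The main obstacle is the dualisation step for the $Y$ indecomposable case, which rests on formulating and verifying the dual of Lemma~\ref{sec:appl-comp-irred-1}(2).
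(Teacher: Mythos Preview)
Your approach matches the paper's: it too specialises Proposition~\ref{sec:kern-char-homog-1} with $d=1$, invokes Lemma~\ref{sec:appl-comp-irred-1} for the path of irreducibles, handles the freely irreducible case via Proposition~\ref{subsec_wellbehavedexistence}(2), and treats the case where $Y$ is indecomposable by duality. One small difference worth noting: the paper simply cites the indecomposability of the kernel of an irreducible morphism as a standard fact (see the sentence immediately preceding the theorem) rather than extracting it from Lemma~\ref{sec:appl-comp-irred-1}(2), whose very statement already places $X_0=K$ inside a path between indecomposables --- so it is cleaner to invoke the classical result directly and reserve the lemma for producing the length-$n$ path.
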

\begin{proof}
  Assume that $X$ is indecomposable (the remaining case is dealt with
  using dual considerations).
  Let $\Gamma$ be the Auslander-Reiten component containing $X$.
  Let $F\colon \mathcal C\to \mathrm{ind}\,\Gamma$ be a functor with the
  covering property for which $f$ is homogeneous up to $\mathrm{rad}^2$
  (see Proposition~\ref{subsec_wellbehavedexistence}). Note that, when
  $f$ is freely irreducible, it may be assumed to be
  homogeneous. Let $f'$ be the homogeneous part of $f$. In particular,
  $f=f'$ if $f$ is freely irreducible.  The conclusion
  therefore follows from Lemma~\ref{sec:appl-comp-irred-1} and
  Proposition~\ref{sec:kern-char-homog-1}.
\end{proof}

The previous result provides the following characterisation of when
the left degree is finite in terms of the kernel of the considered
freely irreducible morphism. This characterisation is proved in
\cite{CLT1} for irreducible morphisms with indecomposable domain or
with indecomposable codomain.

\begin{cor}
  \label{sec:spec-freely-irred-1}
  Let $X,Y\in \mathrm{mod}\,A$ be such that $X$ or $Y$ is
  indecomposable. Let $f\colon X\to Y$ be a freely irreducible
  morphism. Let $i\colon\mathrm{Ker}(f)\to X$ be the inclusion
  morphism. Then, for every integer $n\geqslant 1$, the following
  conditions are equivalent.
  \begin{enumerate}[(i)]
  \item $d_\ell(f)$ is finite and equal to $n$,
  \item $i\in \mathrm{rad}^n\backslash \mathrm{rad}^{n+1}$.
  \end{enumerate}
\end{cor}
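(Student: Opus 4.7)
The plan is to derive this corollary essentially directly from Theorem~\ref{sec:finite-left-degree-1}. The key observation is that, since $f$ is freely irreducible, that theorem applies with the choice $f'=f$: indeed, by Proposition~\ref{subsec_wellbehavedexistence}(2) there exists a functor with the covering property making $f$ itself homogeneous, so the ``homogeneous part'' of $f$ (in the sense of Section~\ref{sec:applications1}) coincides with $f$. Hence the inclusion morphism $i\colon\mathrm{Ker}(f)\to X$ and the kernel inclusion appearing in Theorem~\ref{sec:finite-left-degree-1} are literally the same morphism.

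For $(i)\Rightarrow(ii)$, if $d_\ell(f)=n<\infty$, then Theorem~\ref{sec:finite-left-degree-1} (together with the previous observation that $f'=f$) yields at once that $i\in\mathrm{rad}^n\setminus\mathrm{rad}^{n+1}$.

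For $(ii)\Rightarrow(i)$, the first step is to establish that $d_\ell(f)$ is finite, so as to be in position to apply the theorem. Writing a direct sum decomposition $\mathrm{Ker}(f)=\bigoplus_j K_j$ into indecomposable modules and denoting by $i_j\colon K_j\to X$ the components of $i$, each $i_j$ lies in $\mathrm{rad}^n$ (since $i\in\mathrm{rad}^n$), and the assumption $i\notin\mathrm{rad}^{n+1}$ forces some component $i_{j_0}$ to lie in $\mathrm{rad}^n\setminus\mathrm{rad}^{n+1}$. Because $i_{j_0}f=0\in\mathrm{rad}^{n+2}$ and $K_{j_0}$ is indecomposable, the pair $(K_{j_0},i_{j_0})$ witnesses $d_\ell(f)\leqslant n$, and in particular $d_\ell(f)$ is finite. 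Now invoking Theorem~\ref{sec:finite-left-degree-1} (again with $f'=f$) gives $i\in\mathrm{rad}^{d_\ell(f)}\setminus\mathrm{rad}^{d_\ell(f)+1}$, which combined with the hypothesis $i\in\mathrm{rad}^n\setminus\mathrm{rad}^{n+1}$ forces $d_\ell(f)=n$.

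There is no real obstacle here: the entire content of the corollary has already been loaded into Theorem~\ref{sec:finite-left-degree-1}. The only mild point of care is the passage to an indecomposable summand of $\mathrm{Ker}(f)$ in the $(ii)\Rightarrow(i)$ direction, which is needed because $d_\ell$ is defined via morphisms whose domain is \emph{indecomposable}, whereas $\mathrm{Ker}(f)$ need not be, and this is resolved by the component argument above.
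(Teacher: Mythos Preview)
Your proof is correct and follows essentially the same route as the paper's: reduce to the situation where $d_\ell(f)$ is finite, then invoke part~(2) of Theorem~\ref{sec:finite-left-degree-1} with $f'=f$ (the paper compresses this into two sentences). Your decomposition of $\mathrm{Ker}(f)$ into indecomposable summands in the $(ii)\Rightarrow(i)$ direction is harmless but unnecessary here, since the kernel of an irreducible morphism with indecomposable domain or codomain is already indecomposable (this is noted at the start of Section~\ref{sec:general-case}).
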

\begin{proof}
  Both
  $(i)$ and $(ii)$ imply that $d_\ell(f)$ is finite. Hence $d_\ell(f)$
  may be assumed to be finite. Therefore, part (2) of
  Theorem~\ref{sec:finite-left-degree-1} applies with $f=f'$. This
  proves the equivalence.
\end{proof}

The following Corollary extends \cite[Corollary 3.2,
Corollary 3.8]{CLT1} to the setting of algebras over perfect
fields with an analogous proof. The details are given for the
convenience of the reader. The Corollary relates the finiteness of
left and right degrees for a given irreducible morphism.
\begin{cor}
  \label{sec:right-degree-versus}
  Let $f\colon X\to Y$ be an irreducible morphism with $X\in\mathrm{ind}\,A$ or $Y\in \mathrm{ind}\,A$.
  \begin{enumerate}
  \item If $d_\ell(f)<\infty$ then $f$ is not a monomorphism and
    $d_r(f)=\infty$. In particular every left minimal almost split
    morphism with non-injective domain has infinite left degree.
  \item If $d_r(f)<\infty$ then $f$ is not an epimorphism and
    $d_\ell(f)=\infty$.
    In particular every right minimal almost split morphism with
    non-projective codomain has infinite right degree.
  \item If $A$ is of finite representation type then the following
    conditions are equivalent
    \begin{enumerate}[(a)]
    \item $d_\ell(f)<\infty$,
    \item $d_r(f)=\infty$,
    \item $f$ is an epimorphism.
    \end{enumerate}
  \end{enumerate}
\end{cor}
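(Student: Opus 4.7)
The plan is to prove (1) in detail, deduce (2) by dualising, and obtain (3) by combining (1) and (2) with the vanishing $\mathrm{rad}^\infty=0$ valid on $\mathrm{mod}\,A$ in finite representation type. I will use throughout the standard fact that any irreducible morphism $f\colon X\to Y$ is either a monomorphism or an epimorphism (factor $f=\pi\iota$ through its image and apply the definition of irreducibility: $\iota$ is epi and split mono implies iso, so $f$ is mono; dually for the other case) and that, being non-isomorphic, such an $f$ satisfies $\dim_\k X<\dim_\k Y$ if it is a monomorphism and $\dim_\k X>\dim_\k Y$ if it is an epimorphism.

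For (1), I apply Theorem~\ref{sec:finite-left-degree-1} to produce an irreducible $f'\colon X\to Y$ with $f-f'\in\mathrm{rad}^2$ and $\mathrm{Ker}(f')\neq 0$, since the inclusion $\mathrm{Ker}(f')\to X$ lies in $\mathrm{rad}^n\setminus\mathrm{rad}^{n+1}$. Thus $f'$ is not a monomorphism, hence is an epimorphism, giving $\dim_\k X>\dim_\k Y$. If $f$ were a monomorphism, the dimension inequality would reverse, contradicting this; hence $f$ is not a monomorphism. For $d_r(f)=\infty$, I argue by contradiction: if $d_r(f)<\infty$, the dual of Theorem~\ref{sec:finite-left-degree-1} yields $f''$ irreducible with $\mathrm{Coker}(f'')\neq 0$, so $f''$ is a monomorphism, forcing $\dim_\k X<\dim_\k Y$ and contradicting the inequality above. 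The ``in particular'' clause is immediate: a left minimal almost split morphism with non-injective domain is the left-hand morphism of an almost split sequence, hence an irreducible monomorphism, so the contrapositive of the first assertion of~(1) gives $d_\ell=\infty$. Part~(2) is entirely dual.

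For (3), the implications (a)$\Rightarrow$(c) and (c)$\Rightarrow$(b) follow immediately from (1) and (2) together with the mono-or-epi dichotomy. For (b)$\Rightarrow$(a), finite representation type enters via $\mathrm{rad}^\infty=0$ on $\mathrm{mod}\,A$. If $f$ is not an epimorphism (hence a monomorphism by the dichotomy), pick an indecomposable summand $C$ of $\mathrm{Coker}(f)$; the composite $g\colon Y\to\mathrm{Coker}(f)\to C$ is nonzero, lies in some $\mathrm{rad}^m\setminus\mathrm{rad}^{m+1}$ with $m$ finite (by $\mathrm{rad}^\infty=0$), and satisfies $gf=0\in\mathrm{rad}^{m+2}$; hence $d_r(f)\leqslant m<\infty$, contradicting~(b). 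So $f$ is an epimorphism. Dually, picking an indecomposable summand $K$ of $\mathrm{Ker}(f)\neq 0$ (with $K\neq X$ since $f\neq 0$), the inclusion $K\to X$ lies in $\mathrm{rad}^{n'}\setminus\mathrm{rad}^{n'+1}$ for some finite $n'$, yielding $d_\ell(f)\leqslant n'<\infty$, which is~(a). The main obstacle lies in bridging $f$ and $f'$ in~(1): the perturbation $f-f'\in\mathrm{rad}^2$ could a priori change whether $f$ is a monomorphism or an epimorphism, and it is the length/dimension argument that circumvents this cleanly, exploiting that irreducibility forces strict length inequalities in opposite directions.
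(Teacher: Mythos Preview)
Your proof is correct and follows essentially the same approach as the paper: use Theorem~\ref{sec:finite-left-degree-1} to produce an irreducible $f'$ with nonzero kernel, deduce the length inequality $\dim_\k X>\dim_\k Y$, conclude via the mono/epi dichotomy for irreducible morphisms, and handle $d_r(f)=\infty$ by the dual argument; for~(3) both you and the paper invoke $\mathrm{rad}^\infty=0$ in finite representation type. Your version is simply more explicit than the paper's terse treatment, in particular in spelling out the implication (b)$\Rightarrow$(a) via the cokernel and kernel.
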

\begin{proof}
  Assume that $d_\ell(f)<\infty$. By
  Theorem~\ref{sec:finite-left-degree-1}, there exists an irreducible
  morphism $f'\colon X\to Y$ such that the inclusion morphism $\mathrm{Ker}(f')\to X$ does not lie in $\mathrm{rad}^{1+d_\ell(f)}$. In
  particular, $f'$ is not a monomorphism, and hence $X$ has larger
  length than $Y$ does. Therefore $f$ is not a monomorphism, and hence
  it is an epimorphism. Using dual considerations shows that, if
  $d_r(f)<\infty$, then $f$ is a monomorphism and not an
  epimorphism. Whence (1) and (2). And (3) follows from (1) and (2)
  because, when $A$ is of finite representation type, then any nonzero
  morphism lies in $\mathrm{rad}^n\backslash\mathrm{rad}^{n+1}$ for some
  $n\in \mathbb N$.
\end{proof}

\subsection{Application to the kernel of irreducible morphisms}
\label{subsec_degreealmostsplit}

The objective of this subsection is to prove
Proposition~\ref{degree}: two irreducible morphisms $X\to Y$ where
$X\in \mathrm{ind}\,A$ is such that $\kappa_X\simeq \k$ have isomorphic
kernels (or, cokernels) whenever at least one of them has finite left
degree (or, right degree, respectively). For this purpose, the
following result is needed. It is proved in \cite[Proposition
3.3]{CLT1} when $\k$ is algebraically closed and part of the statement
is proved in
in \cite[1.10]{L2} when $\k$ is an artin ring.

\begin{prop}\label{degree}
  Let $f\colon X\to Y$ be an irreducible morphism with $Y\in\mathrm{ind}\,A$ non-projective.   Assume that the almost split sequence in
  $\mathrm{mod}\,A$

  \begin{equation}
    \xymatrix@R=10pt{
      && X' \ar[rd]^{f'}\\
      0\ar[r]&\tau_A Y \ar[ru]^g \ar[rd]_{g'} && Y\ar[r] & 0\\
      &&X \ar[ru]_f
    }\notag
  \end{equation}

  \noindent is such that $X'\neq 0$. Then $d_\ell(f)<\infty$ if and only if
  $d_\ell(g)<\infty$. In such a case $d_\ell(g)=d_\ell(f)-1$.
\end{prop}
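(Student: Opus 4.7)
The plan is to apply Proposition~\ref{sec:kern-char-homog-1} to the right minimal almost split morphism $[f,f']\colon X\oplus X'\to Y$, whose kernel $\tau_A Y$ is included via the irreducible morphism $\iota=\binom{g'}{g}$. The morphism $[f,f']$ lies in $\mathrm{rad}\setminus\mathrm{rad}^2$, and each of its components from an indecomposable summand of $X\oplus X'$ to $Y$ is a single irreducible morphism between indecomposables; by the second bullet point of~\ref{sec:setting-study}, $[f,f']$ is then homogeneous up to $\mathrm{rad}^2$ with respect to a covering functor for the Auslander--Reiten component of $Y$. Its kernel $\tau_A Y$ is indecomposable and $\iota$ is irreducible, so Proposition~\ref{sec:reduc-vers-kern-1} yields the trivial decomposition $K^{(1)}=\tau_A Y$, and Proposition~\ref{sec:kern-char-homog-1} then provides, for every indecomposable $Z$ and every $\ell\geq 1$, the exact sequence
\[
0\to \frac{\mathrm{rad}^{\ell-1}}{\mathrm{rad}^{\ell}}(Z,\tau_A Y)\xrightarrow{\,-\cdot\iota\,}\frac{\mathrm{rad}^{\ell}}{\mathrm{rad}^{\ell+1}}(Z,X\oplus X')\xrightarrow{\,-\cdot[f,f']\,}\frac{\mathrm{rad}^{\ell+1}}{\mathrm{rad}^{\ell+2}}(Z,Y).
\]

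For the forward direction, I would pick a witness $\psi\colon Z\to X$ with $\psi\in\mathrm{rad}^n\setminus\mathrm{rad}^{n+1}$ and $\psi f\in\mathrm{rad}^{n+2}$ for $n=d_\ell(f)$. Since radicals respect direct sums, the class of $(\psi,0)\in\mathrm{rad}^n(Z,X\oplus X')$ modulo $\mathrm{rad}^{n+1}$ is nonzero, and its image under $-\cdot[f,f']$ is $\overline{\psi f}=0$. Exactness at $\ell=n$ lifts it to some $\varphi\in\mathrm{rad}^{n-1}(Z,\tau_A Y)$ with $\varphi\iota\equiv(\psi,0)\pmod{\mathrm{rad}^{n+1}}$, so in particular $\varphi g\in\mathrm{rad}^{n+1}$; injectivity of $-\cdot\iota$ forces $\varphi\notin\mathrm{rad}^n$, so $\varphi$ witnesses $d_\ell(g)\leq n-1$.

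Conversely, given a witness $\varphi\colon Z\to\tau_A Y$ with $\varphi\in\mathrm{rad}^m\setminus\mathrm{rad}^{m+1}$ and $\varphi g\in\mathrm{rad}^{m+2}$ for $m=d_\ell(g)$, I would apply the exact sequence at $\ell=m+1$. Injectivity forces $(\overline{\varphi g'},\overline{\varphi g})\neq 0$ in $\mathrm{rad}^{m+1}/\mathrm{rad}^{m+2}(Z,X\oplus X')$, and since $\overline{\varphi g}=0$ it follows that $\varphi g'\in\mathrm{rad}^{m+1}\setminus\mathrm{rad}^{m+2}$. The mesh relation $g'f+gf'=0$ then yields $\varphi g'f=-\varphi g\cdot f'\in\mathrm{rad}^{m+3}$, so $\psi:=\varphi g'$ witnesses $d_\ell(f)\leq m+1$. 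Combining the two directions proves the equivalence of finiteness together with $d_\ell(g)=d_\ell(f)-1$.

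The main obstacle will be the homogeneity bookkeeping: Proposition~\ref{sec:kern-char-homog-1} formally applies to the homogeneous part of $[f,f']$, whose kernel is only isomorphic (not equal) to $\tau_A Y$ via a suitable automorphism as produced in~\ref{sec:reduct-incl-morph}. One must check that the $\mathrm{rad}^2$ discrepancy between $[f,f']$ and its homogeneous part does not affect any of the radical-depth estimates above (it is absorbed at the $\mathrm{rad}^{n+2}$ and $\mathrm{rad}^{m+3}$ levels throughout). The hypothesis $X'\neq 0$ is precisely what makes $g\colon\tau_A Y\to X'$ a nontrivial irreducible morphism, so that the second coordinate of the exact sequence is meaningful and $d_\ell(g)$ carries information.
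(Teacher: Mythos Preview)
Your proposal is correct, and the converse direction coincides with the paper's argument: the injectivity of $-\cdot\iota$ in your exact sequence is exactly the statement $d_\ell([g,g'])=\infty$, which the paper obtains from Corollary~\ref{sec:right-degree-versus}, and the mesh relation $g'f+gf'=0$ finishes both arguments identically.

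The forward direction is where your route genuinely differs. The paper simply cites Liu \cite[1.10]{L2} for the inequality $d_\ell(g)\leqslant d_\ell(f)-1$, whereas you extract it from the middle exactness in Proposition~\ref{sec:kern-char-homog-1} applied to the sink map $[f,f']$. Your approach has the virtue of being self-contained within the paper's own machinery and shows that Liu's inequality is already a consequence of the kernel characterisation; the paper's approach is shorter but relies on an external reference.

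On the obstacle you flag: your pointer to~\ref{sec:reduct-incl-morph} is not quite the right tool, since that subsection produces an automorphism of $\mathrm{Ker}(f')$, not a comparison between $\mathrm{Ker}(f')$ and $\mathrm{Ker}(f)$. The clean fix uses the right almost split property directly. Write the homogeneous part as $\widetilde f=[f,f']+r$ with $r\in\mathrm{rad}^2$; since $[f,f']$ is right almost split and each factor of $r$ through $Y$ is non-split, one may lift $r=s[f,f']$ with $s\in\mathrm{rad}(X\oplus X',X\oplus X')$. Hence $\widetilde f=(1+s)[f,f']$ with $1+s$ an automorphism, so $\mathrm{Ker}(\widetilde f)\cong\tau_A Y$ via an isomorphism $\theta$ satisfying $i\equiv\theta\iota\pmod{\mathrm{rad}^2}$. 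The exact sequence of Proposition~\ref{sec:kern-char-homog-1} then transports along $\theta$ to the sequence you wrote, with $\iota=(g',g)$ appearing verbatim, and your argument goes through without change.
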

\begin{proof}
  It was proved in \cite[1.10]{L2} that if $d_\ell(f)<\infty$ then
  $d_\ell(g)\leqslant d_\ell(f)-1$. Conversely assume that
  $d_\ell(g)=n<\infty$. Then there exists $Z\in \mathrm{ind}\,A$ and $h\in
  \mathrm{Hom}_A(Z,\tau_AY)$ such that $h\in \mathrm{rad}^n\backslash\mathrm{rad}^{n+1}$ and $hg\in \mathrm{rad}^{n+2}$. In particular $hg'\in
  \mathrm{rad}^{n+1}$. On the one hand $d_\ell([g,g'])=\infty$ because
  $[g,g']$ is left minimal  almost split
  (see Corollary~\ref{sec:right-degree-versus}). Therefore $h[g,g']=[hg,hg']\in \mathrm{rad}^{n+1}\backslash\mathrm{rad}^{n+2}$, and hence $hg'\not\in \mathrm{rad}^{n+2}$. On the other hand $hg'f=-hgf'\in \mathrm{rad}^{n+3}$. Thus $d_\ell(f)\leqslant n+1=d_\ell(g)+1$.
\end{proof}

Now it is possible to prove the main result of this subsection. It is
proved in \cite[Corollary 3.6]{CLT1} when $\k$ is algebraically
closed (under which condition the residue field of any indecomposable
module is trivial).

\begin{prop}
  Let $f_1,f_2\colon X\to Y$ be irreducible morphisms. Assume that
  $X\in \mathrm{ind}\,A$ and $\kappa_X\simeq \k$, or else $Y\in \mathrm{ind}\,A$ and $\kappa_Y\simeq \k$.
  If $d_\ell(f_1)<\infty$ then $d_\ell(f_1)=d_\ell(f_2)$ and $\mathrm{Ker}(f_1)\simeq \mathrm{Ker}(f_2)$.
\end{prop}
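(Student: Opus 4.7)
The strategy is to exploit the Auslander-Reiten structure together with the hypothesis $\kappa_X\simeq\k$ in order to reduce the problem to an iterated application of Proposition~\ref{degree}. First, I reduce to the case where $X$ is indecomposable with $\kappa_X\simeq\k$; the dual case is handled analogously. Under this assumption, both $f_1$ and $f_2$ are freely irreducible, so Theorem~\ref{sec:finite-left-degree-1} applies to them with $f'=f$ once the finiteness of their left degrees is established. By Corollary~\ref{sec:right-degree-versus}, $f_1$ is an epimorphism, and a comparison of composition lengths shows that so is $f_2$. A further reduction to the case where $Y$ is indecomposable is obtained by decomposing $Y=\oplus_iY_i^{m_i}$ and analysing individual blocks via Lemma~\ref{sec:prop-kern-irred-1} and standard kernel-intersection arguments.

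Assuming next that $Y$ is indecomposable non-projective (the projective case being handled separately via the canonical injection $\mathrm{rad}(Y)\hookrightarrow Y$), I consider the Auslander-Reiten sequence $0\to \tau_AY\to X^a\oplus E'\to Y\to 0$, where $a=\dim_\k\mathrm{irr}(X,Y)$ and $E'$ contains no summand isomorphic to $X$. The restriction of the sink morphism to $X^a$ has components $h_1,\ldots,h_a$ forming a $\k$-basis of $\mathrm{irr}(X,Y)$. Since $\kappa_X\simeq\k$, the class of each $f_k$ modulo $\mathrm{rad}^2$ is a $\k$-linear combination $\sum_j\lambda_{k,j}\overline{h_j}$ with $(\lambda_{k,j})_j$ nonzero, and I construct an automorphism $\sigma_k$ of $X^a$ having $(\lambda_{k,j})_j$ as first column; this reparameterises the AR sequence so that, modulo $\mathrm{rad}^2$, $f_k$ becomes the restriction of the new sink morphism to the first $X$-summand.

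Proposition~\ref{degree} applied to each reparameterised sequence then gives $d_\ell(f_k)=d_\ell(\tilde g_k)+1$, where $\tilde g_k\colon \tau_AY\to X$ is the corresponding component of the reparameterised source morphism. Since the two reparameterisations differ by the automorphism $\sigma_1\sigma_2^{-1}$ of $X^a$, the morphisms $\tilde g_1$ and $\tilde g_2$ differ by composition with an automorphism of $X$, and hence have equal left degrees, which yields $d_\ell(f_1)=d_\ell(f_2)=n$. To establish $\mathrm{Ker}(f_1)\simeq \mathrm{Ker}(f_2)$, I proceed by induction on $n$: for the base case $n=1$, Theorem~\ref{sec:finite-left-degree-1}(2) identifies $\mathrm{Ker}(f_k)$ as the source of an irreducible morphism ending at $X$, and the AR structure determines this source up to isomorphism. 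For the inductive step, Proposition~\ref{degree} and the pullback construction along the AR sequence relate $\mathrm{Ker}(f_k)$ to $\mathrm{Ker}(\tilde g_k)$, and the inductive hypothesis yields $\mathrm{Ker}(\tilde g_1)\simeq \mathrm{Ker}(\tilde g_2)$, whence $\mathrm{Ker}(f_1)\simeq \mathrm{Ker}(f_2)$.

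The main obstacle I expect is the reparameterisation step, specifically the verification that the automorphism $\sigma_k$ absorbs the discrepancy between $f_k$ and $\sum_j\lambda_{k,j}h_j$ modulo $\mathrm{rad}^2$ without spoiling the analysis of the kernel. The hypothesis $\kappa_X\simeq\k$ is indispensable here, as it reduces the bimodule structure of $\mathrm{irr}(X,Y)$ to a plain $\k$-vector space structure, so that completing $(\lambda_{k,j})_j$ to a basis of $\k^a$ is a routine linear-algebraic construction.
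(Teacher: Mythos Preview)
Your proposal contains a genuine error in its central step. In Proposition~\ref{degree}, the morphism $g$ whose degree equals $d_\ell(f)-1$ is the component of the source map landing in the \emph{complement} $X'$ of $X$ inside the middle term, not in $X$ itself. After you reparameterise so that $f_k$ is the restriction of the sink map to the first copy of $X$, the morphism supplied by Proposition~\ref{degree} is $\tilde g_k\colon\tau_AY\to X^{a-1}\oplus E'$, not $\tau_AY\to X$. Once this is corrected, the subsequent claims collapse: the relation between $\tilde g_1$ and $\tilde g_2$ induced by $\sigma_1\sigma_2^{-1}$ involves a mixing term $\tau_AY\to X\to X^{a-1}$ and is not mere postcomposition with an automorphism; and the inductive step fails because neither hypothesis of the proposition is inherited (the domain $\tau_AY$ need not satisfy $\kappa_{\tau_AY}\simeq\k$ in this case, and the codomain $X^{a-1}\oplus E'$ is not indecomposable). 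The preliminary reduction to $Y$ indecomposable is also not justified: knowing each $\mathrm{Ker}(f_{1,t})\simeq\mathrm{Ker}(f_{2,t})$ does not by itself give $\mathrm{Ker}(f_1)\simeq\mathrm{Ker}(f_2)$, since the latter are intersections inside $X$.

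The paper's argument is direct and avoids Proposition~\ref{degree} entirely in the case $X\in\mathrm{ind}\,A$, $\kappa_X\simeq\k$. The point is that finiteness of the left degree of each arrow $X\to Y_t$ forces its valuation to be of the form $(b_t,1)$, so $\mathrm{irr}(X,Y_t)$ is one-dimensional over $\kappa_{Y_t}$; hence for each $t$ there is an automorphism $a_t$ of $Y_t$ with $f_{2,t}-f_{1,t}a_t\in\mathrm{rad}^2$. Composing the inclusion $i\colon\mathrm{Ker}(f_1)\hookrightarrow X$ with $f_2$ then lands in $\mathrm{rad}^{d_\ell(f_1)+2}$, giving $d_\ell(f_2)\leqslant d_\ell(f_1)$, and symmetry yields equality. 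The kernel isomorphism comes from a single application of the exact sequence in Theorem~\ref{sec:finite-left-degree-1}(3): since $j\colon\mathrm{Ker}(f_2)\to X$ satisfies $jf_1\in\mathrm{rad}^{n+2}$, the class of $j$ factors through $i$, and indecomposability of both kernels upgrades this to an isomorphism. Proposition~\ref{degree} is used only in the other case ($Y\in\mathrm{ind}\,A$, $\kappa_Y\simeq\k$), and there just once, to pass to $\tau_AY$---where $\kappa_{\tau_AY}\simeq\kappa_Y\simeq\k$---and invoke the first case.
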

\begin{proof}
  First suppose that $X\in \mathrm{ind}\,A$ and $\kappa_X\simeq
  \k$. In particular, both $f_1$ and $f_2$ are freely
  irreducible. Denote by $i\colon \mathrm{Ker}(f_1)\to X$ and $j\colon \mathrm{Ker}(f_2)\to X$ the inclusion
morphisms. Let $Y=\oplus_t Y_t$ be a decomposition such that $Y_t\in
\mathrm{ind}\,A$ for every $t$, an write $f_1=[f_{1,t}\ ;\ t]$ and
$f_2=[f_{2,t}\ ;\ t]$ accordingly.
 For every $t$, the arrow $X\to Y_t$ in
$\Gamma(\mathrm{mod}\,A)$ has finite degree at most $d_\ell(f_1)$. Since
$\kappa_X\simeq \k$, it follows that $X\to Y_t$ has valuation $(b,1)$
for some $b\geqslant 1$ (see \ref{subsec_modules}). Therefore, there
exists $a_t\in \mathrm{Aut}_A(Y_t)$ such that $f_{2,t}-f_{1,t}a_t\in \mathrm{rad}^2$. Since $if_1=0$, it follows that $i[f_{1,t}a_t\ ;\ t]=0$,
and hence $if_2\in \mathrm{rad}^{2+d_\ell(f_1)}$. This shows that $f_2$
has finite left degree bound by $d_\ell(f_1)$. Now, $f_1$ and $f_2$
play symmetric roles. Similar considerations as above therefore show
that $jf_1\in \mathrm{rad}^{d_\ell(f_2)+2}$ and $d_\ell(f_1)\leqslant
d_\ell(f_2)$. Hence, $d_\ell(f_1)=d_\ell(f_2)$ and $jf_1\in \mathrm{rad}^{2+d_\ell(f_1)}$. Applying part (3) of
Theorem~\ref{sec:finite-left-degree-1}
to $f_1$ then shows that there exists a section $\sigma\colon \mathrm{Ker}(f_2)\to \mathrm{Ker}(f_1)$ such that $j - \sigma i\in \mathrm{rad}^{d_\ell(f_1)+1}$. In particular, $\sigma$ is an isomorphism
because both $\mathrm{Ker}(f_1)$ and $\mathrm{Ker}(f_2)$ are
indecomposable. This finishes proving the proposition when $X\in \mathrm{ind}\,A$ and $\kappa_X\simeq \k$.

\medskip

Now, assume that $Y\in \mathrm{ind}\,A$ and $\kappa_Y\simeq \k$. If
$d_\ell(f_1)=1$, then $f_1$
is right minimal almost split, and hence so is $f_2$. The conclusion
is then immediate. From now on, assume that $d_\ell(f_1)>1$. In
particular, neither $f_1$ nor $f_2$ is minimal right almost
split. From part (3) of Theorem~\ref{sec:finite-left-degree-1}, $f_1$
is not a monomorphism.  Hence, $Y$ is not projective, and there exist
almost
split sequences
  \begin{equation}
    \xymatrix@R=10pt{
      && X' \ar[rd]&&&&&& X' \ar[rd]\\
      0\ar[r]&\tau_AY \ar[ru]^{g_1} \ar[rd] && Y\ar[r] & 0&\mathrm{and}&0\ar[r]& \tau_AY
      \ar[ru]^{g_2} \ar[rd] && Y\ar[r] & 0 \\
      &&X \ar[ru]_{f_1}&&&&&&X \ar[ru]_{f_2}}
    \notag
  \end{equation}
where $X'\neq 0$.  First note that $\mathrm{Ker}(f_1)\simeq \mathrm{Ker}(g_1)$. Indeed, by diagram chasing, the natural monomorphism
  $\mathrm{Ker}(f_1)\to X$ induces an monomorphism $\mathrm{Ker}(f_1)\to
  \mathrm{Ker}(g_1)$. Moreover $\mathrm{Ker}(f_1)$ and $\mathrm{Ker}(g_1)$ have
  the same length. Therefore
  $\mathrm{Ker}(f_1)\simeq \mathrm{Ker}(g_1)$. Similar considerations show
  that $\mathrm{Ker}(f_2)\simeq \mathrm{Ker}(g_2)$. Next if follows from
  Proposition~\ref{degree}
  that $d_\ell(g_1)=d_\ell(f_1)-1$. Since $g_1,g_2\colon \tau_A Y\to X'$
  are irreducible, and since $\kappa_{\tau_AY}\simeq \kappa_Y\simeq
  \k$, the first part of the
  proof yields that $d_\ell(g_1)=d_\ell(g_2)$ and $\mathrm{Ker}(g_1)\simeq \mathrm{Ker}(g_2)$. As a consequence, $\mathrm{Ker}(f_1)\simeq \mathrm{Ker}(f_2)$. Finally, applying again
  Proposition~\ref{degree} gives $d_\ell(f_2)=d_\ell(g_2)+1$, and
  hence $d_\ell(f_2)=d_\ell(f_1)$.
\end{proof}

\subsection{Application to the finite representation type}
\label{subsec_finitetype}

To end this section, an application of
Theorem~\ref{sec:finite-left-degree-1} to a characterisation of
algebras of finite representation type is given below in terms of left
and right degrees.
In the  following theorem, the equivalence of conditions (a) to (e)
was proved in \cite[4]{CLT1}, and the assertions (f) and (g)
where proved in \cite{C4}, in the case where $\k$ is algebraically closed.

\begin{thm}
  Let $A$ be a finite dimensional algebra over a perfect field
  $\k$. The following conditions are equivalent
  \begin{enumerate}[(a)]
  \item $A$ is of finite representation type,
  \item for every indecomposable projective $A$-module $P$, the
    inclusion $\mathrm{rad}(P)\hookrightarrow P$ has finite right degree,
  \item for every indecomposable injective $A$-module $I$, the
    quotient $I\twoheadrightarrow I/\mathrm{soc}(I)$ has finite left
    degree,
  \item for every irreducible epimorphism $X\to Y$ with $X$ or $Y$
    indecomposable, the left degree is finite,
  \item for every irreducible monomorphism $X\to Y$ with $X$ or $Y$
    indecomposable, the right degree is finite.
  \end{enumerate}
  If $A$ is of finite representation type, then
  \begin{enumerate}[(a)]
    \setcounter{enumi}{5}
  \item there exists an indecomposable injective $A$-module $I$ with
    associated irreducible epimorphism $\pi\colon I\twoheadrightarrow
    I/\mathrm{soc}(I)$ such that $d_\ell(f)\leqslant d_\ell(\pi)$ for every irreducible epimorphism
    $f\colon X\to Y$ with $X$ or $Y$ indecomposable.
  \item there exists an indecomposable projective $A$-module $P$ with
    associated irreducible monomorphism $\iota\colon \mathrm{rad}(P)\hookrightarrow
    P$ such that $d_r(f)\leqslant d_r(\iota)$ for every irreducible monomorphism
    $f\colon X\to Y$ with $X$ or $Y$ indecomposable.
  \end{enumerate}
\end{thm}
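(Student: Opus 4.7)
The plan is to prove the equivalence of (a)--(e) through (a), and then to deduce (f) and (g) under (a). For the easy implications, finite representation type is equivalent to $\mathrm{rad}^\infty(\mathrm{mod}\,A) = 0$; given an irreducible epimorphism $f\colon X \to Y$ with $X$ or $Y$ indecomposable, any indecomposable summand $Z$ of $\mathrm{Ker}(f)\neq 0$ yields an inclusion $\iota\colon Z \hookrightarrow X$ lying in $\mathrm{rad}^m \setminus \mathrm{rad}^{m+1}$ for some finite $m$, and since $\iota\cdot f = 0 \in \mathrm{rad}^{m+2}$ one obtains $d_\ell(f) \leqslant m$. The implication $(a) \Rightarrow (e)$ is dual via cokernels, while $(d) \Rightarrow (c)$ and $(e) \Rightarrow (b)$ are immediate, because $\pi_I\colon I \to I/\mathrm{soc}(I)$ and $\iota_P\colon \mathrm{rad}(P)\hookrightarrow P$ are instances of the morphisms considered in (d) and (e).

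The crucial direction is $(c) \Rightarrow (a)$, the case $(b) \Rightarrow (a)$ being dual. I would follow the strategy of \cite[Section 4]{CLT1} and \cite{C4}, established there for algebraically closed fields, now made available over perfect fields by Theorem~\ref{sec:finite-left-degree-1}. Under (c), for each indecomposable injective $I$ that theorem provides an irreducible morphism $\pi_I'$ with $\pi_I' - \pi_I \in \mathrm{rad}^2$, whose kernel is the simple socle $\mathrm{soc}(I)$, and whose kernel inclusion factors as a composition of $n_I = d_\ell(\pi_I)$ irreducible morphisms between indecomposables. Proposition~\ref{degree} then propagates the finite-left-degree property backwards through almost split meshes, decrementing the left degree by one at each step; combining this information across the finitely many indecomposable injectives and invoking Corollary~\ref{sec:right-degree-versus}(3) should force $\mathrm{rad}^\infty = 0$, hence finite representation type. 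This global assembly of the mesh-by-mesh finiteness into a conclusion about the whole Auslander-Reiten quiver is the main obstacle.

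For (f), assume (a). The set $\{d_\ell(\pi_I) : I \text{ indecomposable injective}\}$ is finite, and its maximum is attained by some $I^\ast$. For an arbitrary irreducible epimorphism $f\colon X \to Y$ with $X$ or $Y$ indecomposable, the plan is to iterate Proposition~\ref{degree} in the forward direction through successive almost split meshes: treating the current irreducible morphism as the arrow $g$ of the next mesh gives a new arrow whose left degree is exactly one larger. Corollary~\ref{sec:right-degree-versus}(1) forces this walk to terminate at an irreducible morphism out of an indecomposable injective $I$ (otherwise a left minimal almost split morphism out of a non-injective would have finite left degree), and such an arrow is a component of $\pi_I$. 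Since any witness of the finiteness of $d_\ell(\pi_I)$ is simultaneously a witness for each of its components, this gives $d_\ell(f) \leqslant d_\ell(\pi_I) \leqslant d_\ell(\pi_{I^\ast})$, establishing (f). Statement (g) is proved dually, with right degrees, the morphisms $\iota_P$, and indecomposable projectives in place of injectives.
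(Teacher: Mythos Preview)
Your treatment of the equivalence (a)--(e) is aligned with the paper's: both defer to the arguments of \cite[Section~4]{CLT1}, replacing the kernel result used there by Theorem~\ref{sec:finite-left-degree-1} and the degree/epi--mono dichotomy by Corollary~\ref{sec:right-degree-versus}. One small slip: you invoke part~(3) of Corollary~\ref{sec:right-degree-versus} inside your sketch of $(c)\Rightarrow(a)$, but that part already \emph{assumes} finite representation type, so the reference is circular. You do, however, correctly flag the ``global assembly'' as the substantive step, and this is exactly what the paper (like you) imports from \cite{CLT1}.

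For (f) the approaches genuinely diverge, and yours has a gap. Proposition~\ref{degree} passes from an arrow $g$ with \emph{indecomposable domain} to an arrow $f$ with \emph{indecomposable codomain} and left degree one larger. After one such step the new arrow $f_1\colon X_1\to \tau^{-1}X$ has indecomposable codomain, but $X_1$ need not be indecomposable, so $f_1$ cannot serve as the $g$ of a further mesh. Restricting to an indecomposable summand $(X_1)_j\to\tau^{-1}X$ does give $d_\ell\bigl((f_1)_j\bigr)\geqslant d_\ell(f_1)$, but nothing guarantees that any such component is an epimorphism: it is perfectly possible that every $(f_1)_j$ is a monomorphism (hence of infinite left degree) while $f_1$ itself is surjective, and then the walk stalls before reaching an injective. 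You also do not address how the iteration starts when only $Y$ is indecomposable.

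The paper's argument for (f) avoids iteration altogether and is worth knowing. Given an irreducible epimorphism $f$ with $d_\ell(f)=n$, Theorem~\ref{sec:finite-left-degree-1} yields $f'$ with inclusion $i\colon\mathrm{Ker}(f')\hookrightarrow X$ lying in $\mathrm{rad}^n\setminus\mathrm{rad}^{n+1}$. Pick a simple $S\subseteq\mathrm{soc}(\mathrm{Ker}(f'))$ and let $I$ be its injective hull; by injectivity of $I$, the inclusion $S\hookrightarrow I$ factors as $S\hookrightarrow\mathrm{Ker}(f')\xrightarrow{\,i\,}X\to I$, hence lies in $\mathrm{rad}^n$. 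On the other hand, \emph{every} irreducible morphism $I\to I/S$ has kernel equal to $S$ (the unique simple submodule of $I$), so Theorem~\ref{sec:finite-left-degree-1} applied to $\pi_I$ places $S\hookrightarrow I$ in $\mathrm{rad}^{d_\ell(\pi_I)}\setminus\mathrm{rad}^{d_\ell(\pi_I)+1}$. Comparing the two gives $d_\ell(f)=n\leqslant d_\ell(\pi_I)$; taking the maximum over the finitely many indecomposable injectives then yields (f), and (g) is dual.
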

\begin{proof}   The proof of the equivalence of conditions
  (a) to (e) follows from the considerations in
  \cite[Lemma 4.1, Lemma 4.2, Theorem A]{CLT1} provided that
 Theorem~\ref{sec:finite-left-degree-1} is used here instead of
  \cite[Proposition 3.4]{CLT1} there, and that
  Corollary~\ref{sec:right-degree-versus} is
  used here instead of \cite[Corollary 3.8]{CLT1} there.

  There only remains to prove (f) and (g) assuming that $A$ is of finite
  representation type. Since (g) is dual to (f) and since there are
  only finitely many isomorphism classes of indecomposable injective
  modules, it suffices to prove that, for every
  irreducible epimorphism $f\colon X\to Y$, there exists  an
  indecomposable injective module
  $I$ such that $d_\ell(f)\leqslant d_\ell(\pi)$. Apply
  Theorem~\ref{sec:finite-left-degree-1} to such an $f$: Since
  $d_\ell(f)<\infty$, there exists an
  irreducible morphism $f'\colon
  X\to Y$ such that $f-f'\in \mathrm{rad}^2$ and such that the inclusion
  morphism $i\colon \mathrm{Ker}(f')\to X$ lies in $\mathrm{rad}^{d_\ell(f)}\backslash\mathrm{rad}^{d_\ell(f)+1}$. Let $S$ be a
  simple direct summand of $\mathrm{soc}(\mathrm{Ker}(f'))$. Let $I$ be its injective hull. Therefore there
  exists a morphism $X\to I$ making the following diagram commute
\[
    \xymatrix{
      S\ar@{^(->}[rd] \ar@{^(->}[rrr]&&&I\\
      & \mathrm{Ker}(f') \ar@{^(->}_{i}[r] & X \ar[ru]&.
    }
\]
Note that all irreducible morphisms $I\to I/S$  have their kernel
isomorphic to $S$. Therefore,
considering (c), and applying
Theorem~\ref{sec:finite-left-degree-1} to $\pi\colon I\to I/S$ yields
that the inclusion morphism $S\to I$ lies in $\mathrm{rad}^{d_\ell(\pi)}\backslash\mathrm{rad}^{d_\ell(\pi)+1}$. Thus,
$d_\ell(f)\leqslant d_\ell(\pi)$.
\end{proof}

\section{Application to compositions of irreducible morphisms}
\label{sec:appl-comp-irred}

In this section, by a \emph{path} is meant a path of irreducible
morphisms between indecomposable modules. The objective of this
section is to investigate the paths $f_1,\ldots,f_n$ such that
$f_1\cdots f_n\in \mathrm{rad}^{n+1}$. The main result is
Theorem~\ref{sec:introduction}. The following proposition shows a
first part of it.

\begin{prop}
  \label{sec:appl-comp-irred-3}
  Let
  $X_0\xrightarrow{f_1}X_1\to \cdots \to X_{n-1}\xrightarrow{f_n}X_n$
  be a path. For each $t$, let $f_t'\colon X_{t- 1}\to X_t$ be such as
  $f'$ in Theorem B when $f=f_t$. The following are equivalent.
  \begin{enumerate}[(i)]
  \item $f_1\cdots f_n\in
    \mathrm{rad}^{n+1}$.
  \item There exists $t\in \{1,\ldots,n\}$ such that
    $d_\ell(f_t)\leqslant t-1$, and there exists $h\in \mathrm{rad}^{t-1-d_\ell(f_t)}(X_0,\mathrm{Ker}(f'_t))$ not lying in
    $\mathrm{rad}^{t-d_\ell(f_t)}$, and such that $f_1\cdots f_{t-1}-h i \in \mathrm{rad}^t$ (where $i\colon \mathrm{Ker}(f'_t)\to X_{t-1}$ is the
    inclusion morphism).
  \end{enumerate}
  In particular, if $f_1\cdots f_n\in \mathrm{rad}^{n+1}$ and if $t$ is an
  integer such as in $(ii)$, then the following holds.
  \begin{enumerate}[(i)]
    \setcounter{enumi}{2}
  \item There exists a path of length
    $t-1-d_\ell(f_t)$ from $X_0$ to $\mathrm{Ker}(f'_t)$ and with nonzero
    composition.
  \end{enumerate}
\end{prop}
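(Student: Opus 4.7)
The plan is to prove the equivalence of (i) and (ii) by a direct calculation in one direction and by invoking the exact sequence of Theorem~B in the other, and then to extract the path in (iii) from the witness $h$ produced in (ii).

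For (ii) $\Rightarrow$ (i): set $m := d_\ell(f_t)$. By Theorem~B, the inclusion $i\colon \mathrm{Ker}(f_t') \to X_{t-1}$ lies in $\mathrm{rad}^m$ and satisfies $if_t'=0$. Since $f_t-f_t' \in \mathrm{rad}^2$, one gets $if_t = i(f_t-f_t') \in \mathrm{rad}^{m+2}$, whence $hif_t \in \mathrm{rad}^{t+1}$. Combined with $(f_1\cdots f_{t-1}-hi)f_t \in \mathrm{rad}^{t+1}$, this yields $f_1\cdots f_t \in \mathrm{rad}^{t+1}$, and multiplying on the right by $f_{t+1}\cdots f_n \in \mathrm{rad}^{n-t}$ concludes (i).

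For (i) $\Rightarrow$ (ii): choose $t \in \{1,\ldots,n\}$ minimal such that $f_1\cdots f_t \in \mathrm{rad}^{t+1}$. The minimality ensures that $g := f_1\cdots f_{t-1}$ lies in $\mathrm{rad}^{t-1}\backslash\mathrm{rad}^t$; since moreover $gf_t \in \mathrm{rad}^{t+1}$, the definition of left degree yields $d_\ell(f_t) \leqslant t-1$. Set $m := d_\ell(f_t)$ and apply assertion~(3) of Theorem~B to $f_t$ with $\ell = t-1$ and $Z = X_0$: the sequence
\[
0 \to \frac{\mathrm{rad}^{t-1-m}}{\mathrm{rad}^{t-m}}(X_0,\mathrm{Ker}(f_t')) \xrightarrow{-\cdot i} \frac{\mathrm{rad}^{t-1}}{\mathrm{rad}^t}(X_0,X_{t-1}) \xrightarrow{-\cdot f_t} \frac{\mathrm{rad}^t}{\mathrm{rad}^{t+1}}(X_0,X_t)
\]
is exact. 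Since $\overline{gf_t}=0$, exactness produces $\overline h$ with $\overline h \cdot \overline i = \overline g$. Any lift $h \in \mathrm{rad}^{t-1-m}(X_0,\mathrm{Ker}(f_t'))$ then satisfies $f_1\cdots f_{t-1} - hi \in \mathrm{rad}^t$, and $h \notin \mathrm{rad}^{t-m}$ because otherwise $hi$ would lie in $\mathrm{rad}^t$, forcing $g \in \mathrm{rad}^t$ and contradicting the choice of $t$.

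To derive (iii): decompose $\mathrm{Ker}(f_t') = \bigoplus_j K_j$ into indecomposables. Since the radical filtration respects direct sums in the codomain and $h \notin \mathrm{rad}^{t-m}$, some component $h_j\colon X_0 \to K_j$ lies in $\mathrm{rad}^{t-1-m}\backslash\mathrm{rad}^{t-m}$. Between indecomposables, any element of $\mathrm{rad}^{t-1-m}$ is, modulo $\mathrm{rad}^{t-m}$, a $\k$-linear combination of compositions of paths of irreducible morphisms of length $t-1-m$; since the class of $h_j$ is nonzero, at least one such composition does not lie in $\mathrm{rad}^{t-m}$, and in particular it is nonzero, giving the desired path. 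The main obstacle I anticipate is the (i) $\Rightarrow$ (ii) direction: choosing $t$ so that Theorem~B is applicable in the relevant range (i.e.\ $t-1\geqslant d_\ell(f_t)$) and certifying that the lift $h$ truly falls in the prescribed stratum $\mathrm{rad}^{t-1-m}\backslash\mathrm{rad}^{t-m}$; the other steps are essentially bookkeeping with the radical filtration once the defining properties $if_t'=0$ and $f_t-f_t'\in \mathrm{rad}^2$ are used.
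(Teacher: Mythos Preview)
Your proof is correct and follows the same approach as the paper: (ii) $\Rightarrow$ (i) by direct computation, (i) $\Rightarrow$ (ii) by choosing $t$ minimal and invoking the exact sequence of Theorem~B, and (iii) by expanding $h$ modulo $\mathrm{rad}^{t-m}$ as a sum of compositions of paths. Two remarks: you are actually more careful in (ii) $\Rightarrow$ (i), since the paper writes ``$if_t=0$'' (which strictly speaking is only $if'_t=0$), whereas you correctly handle the discrepancy via $f_t-f'_t\in\mathrm{rad}^2$; and in (iii) the decomposition of $\mathrm{Ker}(f'_t)$ into indecomposables is unnecessary, as Theorem~B(2) already ensures it is indecomposable.
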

\begin{proof}
  Assume $(ii)$. Since $if_t=0$, then
  \[
  f_1 \cdots f_n = (f_1 \cdots f_{t-1}-hi) \cdot f_t\cdots f_n\,.
  \]
  By assumption, $f_1 \cdots f_{t-1}-hi$ lies in $\mathrm{rad}^t$. Consequently, $f_1\cdots f_n\in \mathrm{rad}^n+1$. This
  proves that $(ii)$ implies $(i)$.

  \medskip

  Assume that $f_1\cdots f_n\in \mathrm{rad}^{n+1}$.  There is no loss of
  generality in assuming that $f_1\cdots f_{n-1}\not\in \mathrm{rad}^n$.
  Therefore $f_n$ has finite left degree. Now, consider the exact
  sequence obtained upon applying
  Theorem~\ref{sec:finite-left-degree-1} to $f:=f_n$.  This shows
  assertion $(ii)$. Thus, $(i)\implies (ii)$. The last assertion of
  the corollary is obtained by considering any decomposition of $h$
  into a sum of compositions of paths of length at least $d$.
\end{proof}

The following example shows that, in the previous result,  $(iii)$ need
not imply $(i)$.

\begin{ex} \label{hereditary} Consider the 
  artin $\mathbb{R}$-algebra
of finite representation type
  \[A= \left(
    \begin{tabular}{ll}
      $\mathbb{C}$& 0 \\
      $\mathbb{C}$ & $\mathbb{R}$
    \end{tabular}
  \right)
  \]
 where $\mathbb{R}$ is the field of real numbers and
  $\mathbb{C}$ the field of complex numbers.

  The Auslander-Reiten quiver without considering valuations in the arrows is the following

  \[
  \begin{tabular}{lllllllll}
    $S_2$ &  & $\dots$ &  & $P_{1}/S_{2}$  &  &  \\
          & $\stackrel{f} \searrow $   &  & $\stackrel{g} \nearrow $  &  & $\stackrel{h} \searrow $ &  \\
          &  & $P_1$ &  &  &  & $\tau ^{-1}P_{1}$ \\
          & $\stackrel{f^{\prime}} \nearrow $  &   & $\stackrel{g^{\prime}} \searrow$
                          &  & $\stackrel{h^{\prime}}{\nearrow}$ &  \\
    $S_2$ &  & $\dots$ &  & $P_{1}/S_{2}$ &  &
  \end{tabular}
  \]\vspace{.05in}

  \noindent where the two copies of $S_2$ and
  $P_{1}/S_{2}$ are identified.

  The path
  $S_2 \stackrel{f'} \longrightarrow P_1 \stackrel{g} \longrightarrow
  P_{1}/S_{2}$
  is a pre-sectional path, since $S_{2} {\oplus} S_{2}$ is a summand
  of the domain of the right almost split morphism for $P_1$.
  Moreover, $f^{\prime} g\in \mathrm{rad}^2 \backslash \mathrm{rad}^3$. In
  fact, if $f^{\prime} g \in \mathrm{rad}^3,$ since $fg =0$ then
  $d_l((f,f^{\prime}) )=1$ a contradiction to the fact that
  $(f, f^{\prime})$ is not a surjective right almost split morphism.

  Observe that the above  path satisfies $(iii)$ in Proposition~\ref{sec:appl-comp-irred-3}, but it is not in $\mathrm{rad}^3$.
\end{ex}

\begin{rem}
  \label{sec:appl-comp-irred-4}
  Let $X_0\xrightarrow{f_1} X_1\to \cdots \to X_{n-1}\xrightarrow{f_n}
  X_n$ be a path.
  \begin{enumerate}
  \item Assume that there exists an integer $t$ such that the composition of any
    path $X_0\to X_1\to \cdots \to X_{t-1}\to X_t$ is nonzero and such
    that $d_\ell(f_s)\geqslant
    s$ for every $s\geqslant t+1$. Then, the former
    condition entails
    that $f_1\cdots f_t\in \mathrm{rad}^t\backslash \mathrm{rad}^{t+1}$ (see
    \cite[Proposition 3]{CLT2}). And the latter condition  then implies
    that $f_1\cdots f_n\not\in \mathrm{rad}^{n+1}$.
  \item Following \cite[Section 3]{CLT2}, if there exists a path
    $X_0\to X_1\to \cdots \to X_{n-1}\to
    X_n$ with composition equal to $0$, and if each one of the
    $\k$-vector spaces $\mathrm{irr}(X_{i-1},X_i)$ is one dimensional,  then $f_1\cdots f_n\in \mathrm{rad}^{n+1}$.
  \end{enumerate}
\end{rem}

It is now straightforward to prove the second
main result of this text.
\begin{proof}[Proof of Theorem~\ref{sec:introduction}]
  The equivalence $(i)\Leftrightarrow (ii)$ is provided by Proposition~\ref{sec:appl-comp-irred-3}.

  Assume that $f_1\cdots f_n\in \mathrm{rad}^{n+1}$.
  Applying Proposition~\ref{sec:appl-comp-irred-3} yields an integer $t$
  such that $f_1\cdots f_t\in \mathrm{rad}^{t+1}$ and $d_\ell(f_t)\leqslant
  t-1$, and a path of length
  $t-1-d_\ell(f_t)$ from $X_0$ to $\mathrm{Ker}(f_t)$ and with nonzero
  composition. Applying
  \cite[Section 3]{CLT2} to the path $(f_1,\ldots,f_t)$ then yields a path
  $X_0\to \cdots \to X_t$ with zero composition. This shows that
  $(i)$ and $(ii)$ imply $(iii)$.

  Assume $(iii)$ and assume that
  $\mathrm{dim}_\k\mathrm{irr}(M_{s-1},M_s)=1$ for every
  $s\in \{1,\ldots,t\}$. Then $f_1\cdots f_t\in \mathrm{rad}^{t+1}$
  according to part (2) of Lemma~\ref{sec:appl-comp-irred-1}.
\end{proof}

\bibliographystyle{plain}
\bibliography{biblio-CLT16}

\end{document}